\definecolor{ClassicBlue}{HTML}{096191}
\definecolor{VivaMagenta}{HTML}{BB2649}
\colorlet{inlinkcolor}{ClassicBlue!85!green}
\colorlet{exlinkcolor}{VivaMagenta}
\newtheorem{theorem}{Theorem}[section]
\newtheorem{lemma}[theorem]{Lemma}
\theoremstyle{definition}
\newtheorem{definition}[theorem]{Definition}
\newtheorem{example}[theorem]{Example}
\newtheorem{assumption}[theorem]{Assumption}
\theoremstyle{remark}
\newtheorem{remark}[theorem]{Remark}
\crefname{equation}{}{}
\crefname{assumption}{Assumption}{Assumptions}
\numberwithin{equation}{section}
\def\calB{{\mathcal{B}}}
\def\calC{{\mathcal{C}}}
\def\calD{{\mathcal{D}}}
\def\calF{{\mathcal{F}}}
\def\calG{{\mathcal{G}}}
\def\calJ{{\mathcal{J}}}
\def\calK{{\mathcal{K}}}
\def\calN{{\mathcal{N}}}
\def\calO{{\mathcal{O}}}
\def\calS{{\mathcal{S}}}
\def\calW{{\mathcal{W}}}
\def\bbE{{\mathbb{E}}}
\def\bbN{{\mathbb{N}}}
\def\bbR{{\mathbb{R}}}
\DeclareMathOperator*{\argmin}{arg\,min}
\DeclareMathOperator{\dist}{dist} 
\DeclareMathOperator{\lip}{Lip}
\newcommand{\subG}{\mathsf{subG}}
\newcommand{\bU}{\bar{U}}
\DeclareMathOperator{\tr}{tr}
\DeclareMathOperator{\sta}{sta}
\DeclareMathOperator{\app}{app}
\DeclareMathOperator{\err}{err}
\begin{document}

\title[Current density impedance imaging with PINNs]{Current density impedance imaging with PINNs}
\author{Chenguang Duan$^1$, Yuling Jiao$^{1}$, Xiliang Lu$^{1*}$ and Jerry Zhijian Yang$^1$}
\address{1 School of mathematics and statistics, Wuhan University
}
\email{$^*$ Corresponding author: xllv.math@whu.edu.cn}

\begin{abstract}
In this paper, we introduce CDII-PINNs, a computationally efficient method for solving CDII using PINNs in the framework of Tikhonov regularization. This method constructs a physics-informed loss function by merging the regularized least-squares output functional with an underlying differential equation, which describes the relationship between the conductivity and voltage. A pair of neural networks representing the conductivity and voltage, respectively, are coupled by this loss function. Then, minimizing the loss function provides a reconstruction. A rigorous theoretical guarantee is provided. We give an error analysis for CDII-PINNs and establish a convergence rate, based on prior selected neural network parameters in terms of the number of samples. The numerical simulations demonstrate that CDII-PINNs are efficient, accurate and robust to noise levels ranging from $1\%$ to $20\%$.
\end{abstract}

\maketitle

\section{Introduction}

Electrical Impedance Tomography (EIT) is a medical imaging technique to recover the conductivity value of soft tissue from pairs of flux/voltage on its boundary. However, the problem of reconstructing the electrical conductivity distribution is severely ill-posed, which is very sensitive to noise and measurement errors. To avoid the drawback of EIT, people consider other conductivity imaging techniques that combine electrical impedance tomography (EIT) and magnetic resonance imaging (MRI). In this work, we consider the current density impedance imaging (CDII), one of the medical imaging techniques, which aims to image the current density distribution within a conductive medium \cite{Nachman2009Recovering}. CDII has potential applications in a variety of fields, including non-destructive testing, material science, and biomedical imaging. For example, it can be used in non-destructive testing of concrete to detect cracks and voids, in material science to study the microstructure of metals, and in biomedical imaging to detect tumors and other abnormalities in tissues.

Theoretical results on the uniqueness of the recovery and conditional stability of CDII have been studied in \cite{Nachman2009Recovering, hoell2014current} and \cite{montalto2013stability, lopez2020stability}, respectively. Numerical reconstruction algorithms have been extensively studied, including the level set method \cite{nachman2007conductivity, tamasan2015stable}, the weighted least-gradient method \cite{Nachman2009Recovering}, and methods based on linearized reconstruction \cite{yazdanian2021numerical}. These algorithms have been shown to be effective in reconstructing the current density distribution from measured data. As a classical regularization method, Tikhonov regularization is one of the efficient models to solve  CDII, but the numerical solver to minimize the Tikhonov functional may be completed due to solving forward partial differential equations repeatedly in the iterative methods. To overcome the drawback of iterative methods for minimizing the Tikhonov functional, we propose a deep solver based on Physics-Informed Neural Networks (PINNs) to the Tikhonov regularization problem. Compared to traditional iterative methods, our approach requires only roughly two forward problems in total. This is in contrast to these traditional methods, which require solving two forward problems in each iteration.

In recent years, Physics-Informed Neural Networks (PINNs) \cite{Raissi2019Physics} have gained a lot of attention as a residual-based deep learning method for solving both forward and inverse problems in science and engineering. In \cite{Raissi2019Physics}, the authors explored numerically the recovery of constant functions in PDEs using PINNs. Building on this work, we propose to use PINNs to identify the parameters in CDII by constructing a residual-based loss function that involves two neural networks. These networks are used to simultaneously approximate the unknown conductivity coefficient function and the underlying solution. For solving inverse problems in PDEs, other deep learning methods have also been proposed, such as those in \cite{Bao2020Numerical, Jin2022imaging,zhang2023stability,Mishra2021Estimates}.
Here, we highlight the main differences between these methods and ours.  In \cite{Bao2020Numerical}, a min-max loss function is induced by the weak form of the elliptic equations. This method requires an additional network to approximate the dual variable, which can make the training process more challenging than ours. On the other hand, in \cite{Jin2022imaging}, the authors propose to handle the CDII problem by solving a  forward problem using deep learning under the framework of weighted least-gradient \cite{Nachman2009Recovering}.  In the paper \cite{zhang2023stability}, the authors investigate the inverse source problem within the framework of PINNs. However, their analysis differs significantly from ours, which focuses on consistency from the perspective of statistical learning theory. A related work, \cite{Mishra2021Estimates}, considers the error analysis of PINNs for certain inverse problems. Nevertheless, this study does not provide guidance on how to choose the neural network parameters to ensure consistency.
\par Our main contributions are as follows.  
\begin{itemize}
\item  We introduce CDII-PINNs, a computationally efficient method for solving CDII using PINNs in the framework of nonparametric statistical learning \cite{Gine2015Mathematical}. Our numerical simulations demonstrate that CDII-PINNs achieve high accuracy and robustness to noise levels ranging from $1\%$ to $20\%$.
\item
We present a comprehensive error analysis for CDII-PINNs, along with a prior rule for selecting appropriate neural network parameters, such as width and depth, based on the number of samples. This approach ensures that the estimation error is consistent and can be controlled to achieve the desired level of precision. To the best of our knowledge, this is the first theoretical result of its kind for PINNs in the context of inverse problems.
\end{itemize}
\par The remainder of the paper is organized as follows. In \cref{notations}, we introduce the notations used throughout this paper. Our CDII-PINNs and their convergence analysis are presented in \cref{method} and \cref{errana}, respectively. In \cref{simu}, we provide numerical simulations. The conclusions and main proofs are presented in \cref{conc} and \cref{append}, respectively.

\section{Preliminary}\label{notations}
\subsection{Problem formulation}
Let $U\subseteq\mathbb{R}^{d}$ ($d>1$) denote a bounded non-empty $C^{\infty}$-domain, referring to the space occupied by the object. Without loss of generality, we assume $U\subseteq(0,1)^{d}$. The electric potential or voltage $u$ in the interior of the domain is governed by the following second-order elliptic equation
\begin{equation}\label{eq:elliptic:pde}
\nabla\cdot(\gamma\nabla u)=0\quad\text{in }U, 
\end{equation}
corresponding to the unknown conductivity coefficient $\gamma\in L^{\infty}(U)$. In the idealistic situation, one assumes that the potential is measured everywhere on the boundary, that is, the Dirichlet trace
\begin{equation}\label{eq:elliptic:bc}
u=f\quad\text{on }\partial U.
\end{equation}
The govern equation \cref{eq:elliptic:pde} together with the observations on the boundary \cref{eq:elliptic:bc} is a well-used mathematical model for EIT, called current density impedance imaging (CDII) \cite{Widlak2012Hybrid}. Denote $J=-\gamma\nabla u$ the internal current density vector. In CDII, the magnitude of the current density field $a(x)=|J(x)|$ is observed on the whole of $U$. The goal is to determinate the conductivity distribution from the noisy measurement data.
\par Moreover, we assume the conductivity $\gamma$ belongs to the admissible set $\calK$ defined as
\begin{equation}\label{eq:box}
\calK=\{\gamma\in L^{\infty}(U):\gamma_{0}\leq\gamma(x)\leq\gamma_{1}\text{ a.e. in } U\},
\end{equation}
with $0<\gamma_{0}<\gamma_{1}<\infty$.
\par Given a conductivity function $\gamma$, denote by $u_{\gamma}$ the  solution of \cref{eq:elliptic:pde,eq:elliptic:bc}. The CDII problems can be considered as a nonlinear operator equation
\begin{equation}\label{eq:CDII:operator}
\calG[\gamma]=a^{\dagger},
\end{equation}
where $\calG:\gamma\mapsto\gamma|\nabla u_{\gamma}|$ is a nonlinear operator. In practical applications, one has only access to the noisy data $a^{\delta}$ satisfying $\|a^{\dagger}-a^{\delta}\|\leq\delta$, where $a^{\dagger}=\calG[\gamma^{\dagger}]$.
\subsection{Neural networks}
A neural network $\phi:\mathbb{R}^{N_{0}}\rightarrow\mathbb{R}^{N_{\calD+1}}$ is a function defined by 
\begin{equation*}
\phi(x)=T_{\calD}(\varrho(T_{\calD-1}(\cdots\varrho(T_{0}(x))\cdots))),
\end{equation*}
where the activation function $\varrho$ is applied component-wisely and $T_{\ell}(x):=A_{\ell}x+b_{\ell}$ is an affine transformation with $A_{\ell}\in\bbR^{N_{\ell+1}\times N_{\ell}}$ and $b_{\ell}\in\bbR^{N_{\ell}}$ for $\ell=0,\ldots\calD$. In this paper, we consider the case $N_{0}=d$ and $N_{\calD+1}=1$. The number $\calD$ the depth of the neural network. Denote $\calS_{i}=\sum_{\ell=1}^{i}(\|A_{\ell}\|_{0}+\|b_{\ell}\|_{0})$ for $i=1,\cdots,\calD$ as the number of nonzero weights on the first $i$-layers and then $\calS=\calS_{\calD}$ is the total number of nonzero weights.
\par We define the neural network class $\mathcal{N}_{\varrho}(\calD,\calS,\calB)$ as the collection of $\varrho$-neural networks with at most $\calD$ layers and at most $\calS$ nonzero weights and each weight are bounded by $\calB$.

\section{Method}\label{method}
\par Denote by $Y=a^{\delta}(X)$ for $X\in U$ the measurement data. Consider an additive $\delta^{2}$-sub-Gaussian noise model
\begin{equation}\label{eq:noise:model}
Y=\calG[\gamma^{\dagger}](X)+\xi, \quad\xi\sim^{i.i.d.}\subG(0,\delta^{2}),
\end{equation}
where the measurement noise $\xi$ is independent of $X$ and $\gamma^{\dagger}$.
By Tikhonov regularization, we can obtain a reconstruction by solving the optimization problem
\begin{equation}\label{eq:J:gamma}
\min_{\gamma}\Big\{\calJ(\gamma):=\bbE_{(X,Y)}\big[(Y-\calG[\gamma](X))^{2}\big]+\alpha\psi(\gamma)\Big\},
\end{equation}
where $\psi$ is the regularization functional. Observe that \cref{eq:J:gamma} is equivalent to a optimal control problem
\begin{equation*}
\min_{\gamma,u}J(\gamma,u)=\bbE_{(X,Y)}\big[(Y-\calG[\gamma](X))^{2}\big]+\alpha\psi(\gamma),\quad\text{subject to \cref{eq:elliptic:pde,eq:elliptic:bc}},
\end{equation*}
which can be converted to an unconstrained minimization problem by the penalty method
\begin{equation}\label{eq:population:risk}
\min_{\gamma,u}L(\gamma,u):=J(\gamma,u)+G(\gamma,u).
\end{equation}
For the sake of simplicity, we choose the penalty term as 
\begin{equation*}
G(\gamma,u)=\|\nabla\cdot(\gamma\nabla u)\|_{L^{2}(U)}^{2}+\|Tu-f\|_{L^{2}(\partial U)}^{2},
\end{equation*}
which is the PINNs-type loss function \cite{Raissi2019Physics} for solving the constraint equations \cref{eq:elliptic:pde,eq:elliptic:bc} given $\gamma$. Furthermore, since 
\begin{equation*}
\bbE_{(X,Y)}\big[(Y-\calG[\gamma](X))^{2}\big]
=\|\calG[\gamma^{\dagger}]-\calG[\gamma]\|_{L^{2}(U)}^{2}+\delta^{2},\quad\text{for each}~\gamma:U\rightarrow\bbR,
\end{equation*}
it follows that $J(\gamma^{\dagger},u^{\dagger})\leq J(\gamma,u)$ for each $(\gamma,u)$ if $\alpha=0$. In addition, $G(\gamma,u)\geq0$ for any $(\gamma,u)$ and $G(\gamma^{\dagger},u^{\dagger})=0$, which means
\begin{equation*}
(\gamma^{\dagger},u^{\dagger})\in\argmin_{\gamma,u}L(\gamma,u),
\end{equation*}
provided $\alpha=0$.
\par However, in inverse problems, the distribution of $(X,Y)$ is typically unknown and only a random sample $S=\{(X_{i},Y_{i})\}_{i=1}^{n}\cup\{\bar{X}_{i}\}_{i=1}^{n}$ is available, where $\{(X_{i},Y_{i})\}_{i=1}^{n}$ are $n$ independent copies of $(X,Y)$ and $\{\bar{X}_{i}\}_{i=1}^{n}$ are $n$ independent random variables drawn from the uniform distribution on $\partial U$. Based on the measurement data, we employ the Monte Carlo method to discretize the population risk \cref{eq:population:risk} and yield the empirical risk
\begin{equation}\label{eq:empirical:risk}
L_{n}(\gamma,u):=J_{n}(\gamma,u)+G_{n}(\gamma,u),
\end{equation} 
where the empirical objective functional and penalty term is given by
\begin{align*}
J_{n}(\gamma,u)&=\frac{1}{n}\sum_{i=1}^{n}(Y_{i}-\gamma|\nabla u|(X_{i}))^{2}+\alpha\psi_{n}(\gamma), \quad\text{and} \\
G_{n}(\gamma,u)&=\frac{1}{n}\sum_{i=1}^{n}(\nabla\cdot(\gamma\nabla u)(X_{i}))^{2}+(Tu(\bar{X}_{i})-f(\bar{X}_{i}))^{2},
\end{align*}
respectively. Here $\psi_{n}(\gamma)$ is a discrete version of $\psi(\gamma)$, satisfying $\bbE_{S}\psi_{n}(\gamma)=\psi(\gamma)$ for any fixed function $\gamma:U\rightarrow\bbR$.
\par Finally, we represent the conductivity $\gamma$ and voltage $u$ by two dependent neural networks, respectively and then substitute them into the empirical risk \cref{eq:empirical:risk}. An estimator of $(\gamma^{\dagger},u^{\dagger})$ can be obtained by empirical risk minimization, which reads
\begin{equation}
(\hat{\gamma}_{n},\hat{u}_{n})\in\argmin_{(\gamma,u)\in\calF_{\gamma}\times\calF_{u}}L_{n}(\gamma,u):=J_{n}(\gamma,u)+G_{n}(\gamma,u),
\end{equation}  
where $\calF_{\gamma}$ and $\calF_{u}$ are two neural network classes chosen by the users.
\par To summarize, our approach constructs a loss function by merging the measurement data with an underlying physical model. A pair of neural networks representing conductivity and voltage, respectively, are coupled by this physics-informed loss function. Therefore, we call our method as Physics-Informed Neural Networks for Current Density Impedance Imaging (CDII-PINNs). A detailed reconstruction procedure is shown in \cref{alg:CDII}.
\par\noindent
\begin{minipage}{\textwidth}
\renewcommand*\footnoterule{}
\renewcommand{\thempfootnote}{\fnsymbol{mpfootnote}}
\begin{algorithm}[H]
\caption{CDII-PINNs.}
\label{alg:CDII}
\begin{algorithmic}[1]
\Require A measurement data set $S=\{(X_{i},Y_{i})\}_{i=1}^{n}\cup\{\bar{X}_{i}\}_{i=1}^{n}$.
\State Construct neural networks $(\gamma_{\phi},u_{\theta})$, parameterized by $(\phi,\theta)$.
\State Initialize parameters $(\phi,\theta)$ randomly.
\For {$k=1:K$}
\State \textcolor{white!30!black}{\texttt{\# Compute the loss (or empirical risk) based on the data set.}}
\State {$L_{n}(\gamma_{\phi},u_{\theta})=J_{n}(\gamma_{\phi},u_{\theta})+G_{n}(\gamma_{\phi},u_{\theta})$ as defined in \cref{eq:empirical:risk}.}
\State \textcolor{white!30!black}{\texttt{\# Back propagation:compute the gradient of the loss w.r.t.$(\phi,\theta)$.}}
\State {$(g_{\phi},g_{\theta})=\nabla_{(\phi,\theta)}L_{n}(\gamma_{\phi},u_{\theta})$.}
\State \textcolor{white!30!black}{\texttt{\# Update $(\phi,\theta)$ by some SGD-type algorithm.}}
\State {$(\phi,\theta)\leftarrow\operatorname{SGD}\big\{(\phi,\theta),(g_{\phi},g_{\theta}),\tau\big\}$.}\footnote{Denote by $\operatorname{SGD}\{\texttt{parameters},\texttt{gradient},\texttt{learning rate}\}$ a step of SGD update.}
\EndFor
\Ensure {$\tilde{\gamma}_{n}=\gamma_{\phi}$, $\tilde{u}_{n}=u_{\theta}$.}
\end{algorithmic}
\end{algorithm}
\end{minipage}
\par In the classical methods for solving CDII, such as \cite{Nachman2009Recovering}, to ensure that the reconstructions of conductivity and voltage are admissible, one must solve the forward problem after each update of the reconstructions, which is costly. In contrast, CDII-PINNs need not fit the measurement data and solve the forward problems alternatively, as the loss of which considers the measurement data and the PDE constraints simultaneously.

\section{Error analysis}\label{errana}
In this section, we present an error estimate to the cost functional \eqref{eq:population:risk}. For simplicity we consider the case with regularization parameter $\alpha=0$. Then the estimator $(\tilde{\gamma}_{n},\tilde{u}_{n})\in\calF_{\gamma}\times\calF_{u}$ can be evaluated via its expected excess risk
\begin{equation*}
R(\tilde{\gamma}_{n},\tilde{u}_{n})=\mathbb{E}_{S}\Big[\|\tilde{\gamma}_{n}|\nabla\tilde{u}_{n}|-\gamma^{\dagger}|\nabla u^{\dagger}|\|_{L^{2}(U)}^{2}\Big]+\mathbb{E}_{S}\Big[G(\tilde{\gamma}_{n},\tilde{u}_{n})\Big],
\end{equation*}
where the first term measures the difference between the magnitude of the recovered current density field and that of the ground truth current density field, and the second term reflects how the estimator satisfies the PDE constraint.
\begin{assumption}\label{ass:fg:bound}
Assume $f$ is bounded, i,e., $\|f\|_{H^{3/2}(\partial U)}\leq B_{f}$ with $B_{f} \geq 1$.
\end{assumption}
\begin{assumption}[Boundedness]\label{ass:gammau:bound}
Assume $\gamma^{\dagger}$, $u^{\dagger}$ and functions in $\calF_{\gamma}$, $\calF_{u}$ are bounded, which means
\begin{enumerate}[(i)]
\item $\{\gamma^{\dagger}\}\cup\calF_{\gamma}\subseteq\{\gamma:\|\gamma\|_{W^{1,\infty}(U)}\leq B_{\gamma}\}$ with $B_{\gamma}\geq1$, and
\item $\{u^{\dagger}\}\cup\calF_{u}\subseteq\{u:\|u\|_{W^{2,\infty}(U)}\leq B_{u}\}$ with $B_{u}\geq1$.
\end{enumerate}
\end{assumption}
To measuring the complexity or compactness of a subset of a metric space in a quantitative way, we introduce the covering number and the metric entropy.
\begin{definition}[Covering number and metric entropy] 
Let $(S,d)$ be a metric space, and $T\subseteq S$. A set $T_{\varepsilon}\subseteq S$ is called an $\varepsilon$-cover of $T$ if and only if for each $t\in T$, there exits $t_{\varepsilon}\in T_{\varepsilon}$, such that $d(t,t_{\varepsilon})\leq\varepsilon$. Moreover, 
\begin{equation*}
N(\varepsilon,T,d):=\inf\Big\{|T_{\varepsilon}|:\text{$T_{\varepsilon}$ is an $\varepsilon$-cover of $T$}\Big\}
\end{equation*}
is called the $\varepsilon$-cover number of $T$, and $H(\varepsilon,T,d)=\log N(\varepsilon,T,d)$ is called the $\varepsilon$ metric entropy of $T$.
\end{definition}
\par For each estimator $(\tilde{\gamma}_{n},\tilde{u}_{n})$ taking values in $\calF_{\gamma}\times\calF_{u}$, we define the corresponding quantity 
\begin{equation*}
\Delta_{n}(\tilde{\gamma}_{n},\tilde{u}_{n}):=\mathbb{E}_{S}\Big[L_{n}(\tilde{\gamma}_{n},\tilde{u}_{n})-L_{n}(\hat{\gamma}_{n},\hat{u}_{n})\Big],
\end{equation*}
which measures the difference between the expected empirical risk of $(\tilde{\gamma}_{n},\tilde{u}_{n})$ and the minimum over all functions in the hypothesis class $\calF_{\gamma}\times\calF_{u}$. It is obvious that $\Delta_{n}(\tilde{\gamma}_{n},\tilde{u}_{n})\geq0$ and $\Delta_{n}(\tilde{\gamma}_{n},\tilde{u}_{n})=0$ if and only if $\Delta_{n}(\tilde{\gamma}_{n},\tilde{u}_{n})$ is an empirical risk minimizer.
\par With the help of the preceding notations, we can decompose the expected excess risk as follows.
\begin{lemma}[Error decomposition]\label{lemma:err:dec}
Under the noise model \cref{eq:noise:model}. Suppose \cref{ass:gammau:bound,ass:fg:bound} are fulfilled. Then it holds for each estimator $(\tilde{\gamma}_{n},\tilde{u}_{n})$ taking values in $\calF_{\gamma}\times\calF_{u}$ that
\begin{align*}
R(\tilde{\gamma}_{n},\tilde{u}_{n})&\lesssim\inf_{\varepsilon>0}\Big\{(B_{\gamma}^{2}B_{u}^{2}+B_{f}^{2}+\delta^{2})(H_{\gamma}+H_{u})n^{-1}+(\delta+B_{\gamma}B_{u})(B_{\gamma}+B_{u})\varepsilon\Big\} \\
&\quad+\inf_{(\gamma,u)\in\calF_{\gamma}\times\calF_{u}}R(\gamma,u)+\Delta_{n}(\tilde{\gamma}_{n},\tilde{u}_{n}),
\end{align*}
provided $\varepsilon>0$ and $n$ large enough, where
\begin{equation*}
H_{\gamma}^{\varepsilon}=\log N(\varepsilon,\calF_{\gamma},\|\cdot\|_{W^{1,\infty}(U)})\quad\text{and}\quad H_{u}^{\varepsilon}=\log N(\varepsilon,\calF_{u},\|\cdot\|_{W^{2,\infty}(U)}).
\end{equation*}
\end{lemma}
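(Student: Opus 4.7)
The strategy is the classical decomposition of expected excess risk into approximation, statistical, and optimization errors, combined with a covering-number argument for the statistical term.

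First, reduce to a decomposition of $L$. Because $\alpha=0$, the noise model \cref{eq:noise:model} gives $J(\gamma,u) = \|\calG[\gamma] - \calG[\gamma^\dagger]\|_{L^2(U)}^2 + \delta^2$, while $G(\gamma^\dagger,u^\dagger)=0$ yields $L(\gamma^\dagger,u^\dagger)=\delta^2$. Hence $R(\tilde\gamma_n,\tilde u_n) = \bbE_S\bigl[L(\tilde\gamma_n,\tilde u_n) - L(\gamma^\dagger,u^\dagger)\bigr]$. Pick an almost-minimizer $(\gamma^\ast,u^\ast) \in \calF_\gamma \times \calF_u$ of $R$ over the hypothesis class and telescope:
\begin{align*}
L(\tilde\gamma_n,\tilde u_n) - L(\gamma^\dagger,u^\dagger)
&= \bigl[L(\tilde\gamma_n,\tilde u_n) - L_n(\tilde\gamma_n,\tilde u_n)\bigr] + \bigl[L_n(\tilde\gamma_n,\tilde u_n) - L_n(\hat\gamma_n,\hat u_n)\bigr] \\
&\quad+ \bigl[L_n(\hat\gamma_n,\hat u_n) - L_n(\gamma^\ast,u^\ast)\bigr] + \bigl[L_n(\gamma^\ast,u^\ast) - L(\gamma^\ast,u^\ast)\bigr] \\
&\quad+ \bigl[L(\gamma^\ast,u^\ast) - L(\gamma^\dagger,u^\dagger)\bigr].
\end{align*}
Taking $\bbE_S$ kills the fourth bracket (unbiasedness of $L_n$, using $\bbE_S\psi_n=\psi$), the third is $\le 0$ by the ERM property of $(\hat\gamma_n,\hat u_n)$, the fifth equals $R(\gamma^\ast,u^\ast) \le \inf_{\calF_\gamma\times\calF_u}R$, and the second is $\Delta_n(\tilde\gamma_n,\tilde u_n)$ by definition. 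Thus only the first, statistical term remains, and it is dominated by $\bbE_S\sup_{(\gamma,u)\in\calF_\gamma\times\calF_u}\bigl[L(\gamma,u) - L_n(\gamma,u)\bigr]$.

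For the statistical term I would run a covering argument. Fix $\varepsilon>0$ and select $\varepsilon$-nets $\{\gamma_j\}\subseteq\calF_\gamma$ and $\{u_k\}\subseteq\calF_u$ in $\|\cdot\|_{W^{1,\infty}(U)}$ and $\|\cdot\|_{W^{2,\infty}(U)}$ of log-cardinalities $H_\gamma^\varepsilon$ and $H_u^\varepsilon$. Under \cref{ass:fg:bound,ass:gammau:bound} the maps $(\gamma,u)\mapsto\gamma|\nabla u|$, $(\gamma,u)\mapsto\nabla\cdot(\gamma\nabla u)$, and $u\mapsto Tu-f$ are pointwise Lipschitz in the product of those norms with constants $\lesssim B_\gamma+B_u$; squaring, using $a^2-b^2=(a+b)(a-b)$ and $\bbE|\xi|\lesssim\delta$, promotes this to a uniform discretization error $\lesssim(\delta+B_\gamma B_u)(B_\gamma+B_u)\varepsilon$ on $|L-L_n|$. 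For each fixed cover pair $(\gamma_j,u_k)$, expanding $(Y_i-\gamma_j|\nabla u_k|(X_i))^2$ with $Y_i = \calG[\gamma^\dagger](X_i)+\xi_i$ exposes a pure-noise piece $\xi_i^2$ (independent of $(\gamma,u)$, controllable separately) plus a bounded mean-centered summand whose variance proxy is $\lesssim B_\gamma^2 B_u^2 + B_f^2 + \delta^2$. Bernstein's inequality for sub-exponential variables (product of sub-Gaussian noise and a bounded function) together with a maximal inequality over the $\exp(H_\gamma^\varepsilon+H_u^\varepsilon)$ elements of the product cover yields $\bbE_S\sup|L-L_n|\lesssim (B_\gamma^2 B_u^2+B_f^2+\delta^2)(H_\gamma^\varepsilon+H_u^\varepsilon)/n$. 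Combining with the discretization error and optimizing over $\varepsilon>0$ gives the claimed bound.

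The main obstacle will be the uniform concentration step: one must carefully track how the Bernstein variance and sub-exponential Orlicz norm of the individual summands depend jointly on $B_\gamma, B_u, B_f, \delta$, and verify that the Lipschitz estimate for the divergence-form term $\nabla\cdot(\gamma\nabla u)$ really does require the $W^{1,\infty}$ norm on $\calF_\gamma$ and the $W^{2,\infty}$ norm on $\calF_u$ (this dictates the specific covering norms appearing in $H_\gamma^\varepsilon, H_u^\varepsilon$). The remaining ingredients---telescoping, ERM comparison, unbiasedness of $L_n$, and the absorption of the $\varepsilon$-discretization error---are essentially bookkeeping.
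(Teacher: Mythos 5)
Your overall architecture matches the paper's: decompose into approximation, optimization, and statistical errors; cover $\calF_{\gamma}$ in $W^{1,\infty}$ and $\calF_{u}$ in $W^{2,\infty}$; use Lipschitz continuity of $(\gamma,u)\mapsto\gamma|\nabla u|$ and $(\gamma,u)\mapsto\nabla\cdot(\gamma\nabla u)$ to pay $(\delta+B_{\gamma}B_{u})(B_{\gamma}+B_{u})\varepsilon$ for discretization; and control the cover by Bernstein plus a maximal inequality. The telescoping identity, the ERM comparison, and the unbiasedness of $L_{n}$ at a fixed $(\gamma^{\ast},u^{\ast})$ are all fine.

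The genuine gap is in the step where you claim $\bbE_{S}\sup_{(\gamma,u)}|L(\gamma,u)-L_{n}(\gamma,u)|\lesssim(B_{\gamma}^{2}B_{u}^{2}+B_{f}^{2}+\delta^{2})(H_{\gamma}^{\varepsilon}+H_{u}^{\varepsilon})/n$. With a constant-order variance proxy, Bernstein plus a union bound over $e^{H_{\gamma}^{\varepsilon}+H_{u}^{\varepsilon}}$ cover elements gives deviations of order $\sqrt{\sigma^{2}(H_{\gamma}^{\varepsilon}+H_{u}^{\varepsilon})/n}+M(H_{\gamma}^{\varepsilon}+H_{u}^{\varepsilon})/n$, and the square-root term dominates: a raw uniform sup of $|L-L_{n}|$ over the class only concentrates at rate $n^{-1/2}$. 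The $n^{-1}$ rate in the lemma (which the paper explicitly flags as the improvement over the $n^{-1/2}$ rates in prior work) requires a localization argument that your proposal omits. Concretely, the paper exploits that the excess-risk summand $h(\gamma_{k},u_{\ell},\cdot)$ is nonnegative and bounded, so $\operatorname{Var}(h)\lesssim(B_{\gamma}^{2}B_{u}^{2}+B_{f}^{2})\,\bbE[h]$; it normalizes the centered sums by $r_{k,\ell}=\max\{A,\bbE^{1/2}[h(\gamma_{k},u_{\ell},\cdot)]\}$, applies Bernstein to the normalized maximum $T$, chooses $A\asymp\sqrt{\log(N_{\gamma}N_{u})/n}$ so that $\bbE[T^{2}]\lesssim(\log(N_{\gamma}N_{u})/n)^{2}$, and then uses Cauchy--Schwarz and AM--GM to produce a term $\tfrac12 R(\tilde{\gamma}_{n},\tilde{u}_{n})$ that is absorbed into the left-hand side. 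The same self-normalization and absorption (with the sub-Gaussian maximal inequality for the variables $\eta_{k,\ell}$) is needed for the noise cross term $\tfrac{2}{n}\sum_{i}\xi_{i}\tilde{\gamma}_{n}|\nabla\tilde{u}_{n}|(X_{i})$, which in your write-up is folded into the uniform bound and again would only yield $n^{-1/2}$. Without these two absorption steps your argument proves a weaker statement than the lemma.
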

\par The proof refers to \cref{sec:appendix:errdec}. By \cref{lemma:err:dec}, the expected excess risk can be decomposed into three parts: approximation error, optimization error, and statistical error. The approximation error, define as $\inf_{(\gamma,u)\in\calF_{\gamma}\times\calF_{u}}R(\gamma,u)$, is the minimum of the expected excess risk over all functions in the hypothesis class, measuring the expression power of the functions in $\calF_{\gamma}\times\calF_{u}$. The optimization error $\Delta_{n}(\tilde{\gamma}_{n},\tilde{u}_{n})$ measures the difference between the estimator and the empirical risk minimizer. The remaining part is defined as the statistical error, which is caused by the discretization of the population risk. By choosing an appropriate $\varepsilon$, we can find that the statistical error converges to zero as the sample size $n\rightarrow\infty$. Furthermore, the convergence rate $\calO(\frac{1}{n})$ obtained in \cref{lemma:err:dec} is an improvement of $\calO(\frac{1}{\sqrt{n}})$ in \cite{duan2022convergence,Jiao2022rate,jiao2021error,Jin2022imaging}.
\par In this paper, we ignore the optimization error, and focus on the approximation error, the statistical error and the trade-off between them.
\subsection{Approximation error}
\par In this part, we bound the approximation error. By the following lemma, it is sufficient to estimate the approximation error of functions by neural networks.
\begin{lemma}\label{lemma:app:1}
Suppose \cref{ass:gammau:bound} is fulfilled. Then
\begin{equation*}
\inf_{(\gamma,u)\in\calF_{\gamma}\times\calF_{u}}R(\gamma,u)\leq C_{\app}\cdot B_{\gamma}^{2}B_{u}^{2}\Big(\inf_{\gamma\in\calF_{\gamma}}\|\gamma-\gamma^{\dagger}\|_{C^{1}(\bU)}+\inf_{u\in\calF_{u}}\|u-u^{\dagger}\|_{H^{2}(U)}\Big),
\end{equation*}
where $C_{\app}$ is a constant depending on $\gamma_{0}$, $\gamma_{1}$, $U$ and $B_{f}$.
\end{lemma}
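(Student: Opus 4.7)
The strategy is to fix an arbitrary pair $(\gamma,u)\in\calF_{\gamma}\times\calF_{u}$, estimate each of the three squared-$L^{2}$ contributions
\begin{equation*}
R(\gamma,u)=\bigl\|\gamma|\nabla u|-\gamma^{\dagger}|\nabla u^{\dagger}|\bigr\|_{L^{2}(U)}^{2}+\bigl\|\nabla\cdot(\gamma\nabla u)\bigr\|_{L^{2}(U)}^{2}+\|Tu-f\|_{L^{2}(\partial U)}^{2}
\end{equation*}
separately by isolating the differences $\gamma-\gamma^{\dagger}$ and $u-u^{\dagger}$, and finally take the infimum over the product class (which then factorises). The key observation I would exploit throughout is that $(\gamma^{\dagger},u^{\dagger})$ satisfies \cref{eq:elliptic:pde,eq:elliptic:bc}, so $\nabla\cdot(\gamma^{\dagger}\nabla u^{\dagger})\equiv 0$ and $Tu^{\dagger}=f$; this lets me rewrite the two PDE-type contributions in a form where only the differences appear explicitly.

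For the current-density term I would add and subtract $\gamma^{\dagger}|\nabla u|$ and invoke the pointwise inequality $\bigl||\nabla u|-|\nabla u^{\dagger}|\bigr|\leq|\nabla(u-u^{\dagger})|$ to bound it by $C\bigl(B_{u}^{2}\|\gamma-\gamma^{\dagger}\|_{L^{\infty}(U)}^{2}+B_{\gamma}^{2}\|u-u^{\dagger}\|_{H^{1}(U)}^{2}\bigr)$. For the PDE-residual term I would expand $\nabla\cdot(\gamma\nabla u)=\nabla\gamma\cdot\nabla u+\gamma\Delta u$, subtract the vanishing expression for $(\gamma^{\dagger},u^{\dagger})$, and split into four summands so that each carries a single factor of either $\gamma-\gamma^{\dagger}$ (in $L^{\infty}$ or with one derivative) or $u-u^{\dagger}$ (in $L^{2}$ with up to two derivatives), with a uniformly bounded cofactor of size $B_{u}$ or $B_{\gamma}$. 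For the boundary term I would use $Tu-f=T(u-u^{\dagger})$ and the continuity of the trace operator $T:H^{1}(U)\to L^{2}(\partial U)$. Noting that $\|g\|_{W^{1,\infty}(U)}\leq\|g\|_{C^{1}(\bU)}$ and, on the bounded domain $U$, $\|v\|_{H^{2}(U)}\leq C(U)\|v\|_{W^{2,\infty}(U)}$, these three estimates combine into an inequality of the form
\begin{equation*}
R(\gamma,u)\lesssim B_{u}^{2}\|\gamma-\gamma^{\dagger}\|_{C^{1}(\bU)}^{2}+B_{\gamma}^{2}\|u-u^{\dagger}\|_{H^{2}(U)}^{2},
\end{equation*}
with constant depending only on $\gamma_{0},\gamma_{1},U$ and $B_{f}$.

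To convert the squared norms on the right-hand side into the linear ones demanded by the lemma, I would invoke the a priori bounds $\|\gamma-\gamma^{\dagger}\|_{C^{1}(\bU)}\leq 2B_{\gamma}$ and $\|u-u^{\dagger}\|_{H^{2}(U)}\leq C(U)B_{u}$ from \cref{ass:gammau:bound}, which give $\|\gamma-\gamma^{\dagger}\|_{C^{1}}^{2}\leq 2B_{\gamma}\|\gamma-\gamma^{\dagger}\|_{C^{1}}$ and analogously for $u$. Together with $B_{\gamma},B_{u}\geq 1$, this absorbs everything into a $B_{\gamma}^{2}B_{u}^{2}$ prefactor. Since the resulting bound splits as a sum of a term in $\gamma$ alone and a term in $u$ alone, the infimum over $\calF_{\gamma}\times\calF_{u}$ separates into $\inf_{\gamma\in\calF_{\gamma}}\|\gamma-\gamma^{\dagger}\|_{C^{1}(\bU)}+\inf_{u\in\calF_{u}}\|u-u^{\dagger}\|_{H^{2}(U)}$, yielding the claim.

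The main obstacle, as I anticipate it, is less any single estimate than keeping the function-space bookkeeping consistent: the PDE-residual term produces summands that naturally call for a $W^{1,\infty}$-norm on $\gamma-\gamma^{\dagger}$ and an $H^{2}$-norm on $u-u^{\dagger}$, and one must consistently route these into the $C^{1}(\bU)$ and $H^{2}(U)$ norms appearing in the statement while ensuring the resulting constant $C_{\app}$ depends only on $(\gamma_{0},\gamma_{1},U,B_{f})$ and not on $B_{\gamma}$ or $B_{u}$, which are tracked explicitly in the prefactor.
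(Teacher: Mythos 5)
Your proposal is correct, and it reaches the stated bound, but it handles the penalty term $G(\gamma,u)$ by a genuinely different route than the paper. The paper never expands $\nabla\cdot(\gamma\nabla u)$ by the product rule; instead it introduces the forward solution $u_{\gamma}$ of $\nabla\cdot(\gamma\nabla v)=0$ with $Tv=f$, writes $G(\gamma,u)=\|\nabla\cdot(\gamma\nabla(u-u_{\gamma}))\|_{L^{2}(U)}^{2}+\|T(u-u_{\gamma})\|_{L^{2}(\partial U)}^{2}$, bounds this by $(B_{\gamma}^{2}+C_{\tr}^{2})\|u-u_{\gamma}\|_{H^{2}(U)}^{2}$, and then invokes the Lipschitz continuity of the forward map $\gamma\mapsto u_{\gamma}$ from $C^{1}(\bU)$ to $H^{2}(U)$ (\cref{lemma:forward:lip}, an elliptic regularity estimate) to trade $\|u_{\gamma}-u^{\dagger}\|_{H^{2}(U)}$ for $C_{\lip}B_{f}\|\gamma-\gamma^{\dagger}\|_{C^{1}(\bU)}$; this is precisely where the dependence of $C_{\app}$ on $\gamma_{0}$, $\gamma_{1}$ and $B_{f}$ arises. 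Your route — subtracting the identically vanishing residual $\nabla\cdot(\gamma^{\dagger}\nabla u^{\dagger})$, splitting $\nabla\cdot((\gamma-\gamma^{\dagger})\nabla u)+\nabla\cdot(\gamma^{\dagger}\nabla(u-u^{\dagger}))$ via the product rule, and using $Tu^{\dagger}=f$ for the boundary term — is more elementary: it bypasses \cref{lemma:forward:lip} and elliptic regularity entirely, and in fact produces a constant that does not genuinely depend on $\gamma_{0}$, $\gamma_{1}$ or $B_{f}$ (which is harmless, since the lemma only permits, not requires, that dependence). The paper's detour through $u_{\gamma}$ buys a bound on $G$ in terms of $\|u-u_{\gamma}\|_{H^{2}(U)}$ alone, i.e.\ the distance to the solution manifold, which is the structurally natural quantity; your version requires $u^{\dagger}\in W^{2,\infty}$ so that the product rule applies to the ground truth, which is guaranteed here by \cref{ass:gammau:bound}. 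Your explicit conversion of the squared norms to linear ones via $\|\gamma-\gamma^{\dagger}\|_{C^{1}(\bU)}\leq 2B_{\gamma}$ and $\|u-u^{\dagger}\|_{H^{2}(U)}\lesssim B_{u}$ is also a step the paper leaves implicit, and it is needed to match the stated right-hand side.
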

\par The proof refers to \cref{sec:appendix:proof:approx}.
\par We now turn to investigate the function approximation problems. To this end, we first introduce a type of activation functions.
\begin{definition}[Exponential PU-admissible function \cite{Guhring2021Approximation}]
Let $j\in\bbN$ and $\tau\in\{0,1\}$. We say that a function $\varrho:\bbR\rightarrow\bbR$ is exponential $(j,\tau)$-PU-admissible, if 
\begin{enumerate}[(a)]
\item $\varrho$ is bounded if $\tau=0$, and $\varrho$ is Lipschitz continuous if $\tau=1$;
\item There exists $R>0$ such that $\varrho\in C^{j}(\bbR\backslash[-R,R])$;
\item $\varrho^{\prime}\in W^{j-1,\infty}(\bbR)$, if $j\geq 1$;
\item There exists $A=A(\varrho),B=B(\varrho)\in\bbR$ with $A<B$, some $C=C(\varrho,j)>0$ and some $D=D(\varrho,j)>0$ such that  
\begin{itemize}
\item[(d.1)] $|B-\varrho^{(\tau)}(x)|\leq C\exp(-Dx)$ for all $x>R$;
\item[(d.2)] $|A-\varrho^{(\tau)}(x)|\leq C\exp(Dx)$ for all $x<-R$;
\item[(d.3)] $|\varrho^{(k)}(x)|\leq C\exp(-D|x|)$ for all $x\in\bbR\backslash[-R,R]$ and all $k=\tau+1,\ldots,j$.
\end{itemize}
\end{enumerate}
\end{definition}
Several commonly-used exponential PU-admissible activation functions are shown in \cref{tab:exp:pu:activation}.
\begin{table}[H]
\caption{Commonly-used exponential PU-admissible activation functions.}
\centering
\begin{threeparttable}
\begin{tabular}{lcccccc}
\toprule
Name & Definition & PU-Decay $(j,\tau)$ & $\calB_{\varrho,0}$ & $\calB_{\varrho,1}$ & $\calB_{\varrho,2}$ & $\calB_{\varrho,3}$ \tnote{*} \\
\midrule
\texttt{sigmoid} & $\frac{1}{1+e^{-x}}$ & $(j,0)$ for any $j\in\bbN$ & 1 & 1/4 & 8/9 & $\leq 1$ \\
\texttt{tanh} & $\frac{e^{x}-e^{-x}}{e^{x}+e^{-x}}$ & $(j,0)$ for any $j\in\bbN$ & 1 & 1 & 1 & 2/3 \\
\texttt{softplus} & $\ln(1+e^{x})$ & $(j,1)$ for any $j\in\bbN$ &- &- &- & -\\
\texttt{swish} & $\frac{x}{1+e^{-x}}$ & $(j,1)$ for any $j\in\bbN$ &- &- &- & -\\
\bottomrule
\end{tabular}
\label{tab:exp:pu:activation}
\begin{tablenotes}
\footnotesize
\item[*] $|\varrho(x)|\leq\calB_{\varrho,0}$, $|\varrho^{\prime}(x)|\leq\calB_{\varrho,1}$, $|\varrho^{\prime\prime}(x)|\leq\calB_{\varrho,2}$ and $|\varrho^{\prime\prime\prime}(x)|\leq\calB_{\varrho,3}$ for each $x\in\bU$.
\end{tablenotes}
\end{threeparttable}
\end{table}
\par Applying Proposition 4.8 in \cite{Guhring2021Approximation} and \cref{lemma:app:1}, we have the follow approximation error estimate.
\begin{lemma}[Approximation error]\label{lemma:approx}
Let $d,s\in\mathbb{N}_{+}$ and $U\subseteq(0,1)^{d}$ be a domain. Suppose $\gamma^{\dagger}\in C^{s+1}(\bU)$ and $u^{\dagger}\in H^{s+2}(U)$. Suppose $\calD_{\gamma},\calS_{\gamma},\calD_{u},\calS_{u}\in\bbN_{+}$, and $\varrho$ is an exponential PU-admissible activation function. Suppose \cref{ass:fg:bound,ass:gammau:bound} are fulfilled. Set
\begin{enumerate}[(i)]
\item $\calF_{\gamma}=\calN_{\varrho}(\calD_{\gamma},\calS_{\gamma},\calB_{\gamma})$ with $\calD_{\gamma}=C\log(d+s)$ and $\calB_{\gamma}=C\calS_{\gamma}^{\frac{2s}{d}+7}$, and
\item $\calF_{u}=\calN_{\varrho}(\calD_{u},\calS_{u},\calB_{u})$ with $\calD_{u}=C\log(d+s+1)$ and $\calB_{u}=C\calS_{u}^{\frac{2s+2}{d}+7}$,
\end{enumerate}
where the constant $C$ depends on $d$, $s$, $\mu$ and $U$. Then
\begin{equation*}
\inf_{(\gamma,u)\in\calF_{\gamma}\times\calF_{u}}R(\gamma,u)\leq C_{\app}\cdot B_{\gamma}^{2}B_{u}^{2}\Big\{\|\gamma^{\dagger}\|_{C^{s+1}(\bU)}\cdot\calS_{\gamma}^{-\frac{s-\mu}{d}}+\|u^{\dagger}\|_{H^{s+2}(U)}\cdot\calS_{u}^{-\frac{s+1-\mu}{d}}\Big\},
\end{equation*}
where $C_{\app}$ is a constant depending on $d$, $s$, $\mu$, $\gamma_{0}$, $\gamma_{1}$, $U$ and $B_{f}$.
\end{lemma}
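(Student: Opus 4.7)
The approach is to use \cref{lemma:app:1} as a black box to reduce the lemma to two decoupled function-approximation problems, and then quote Proposition 4.8 of \cite{Guhring2021Approximation} once for each of them.

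First I would note that \cref{lemma:app:1} already gives
\[
\inf_{(\gamma,u)\in\calF_\gamma\times\calF_u} R(\gamma,u)\;\leq\; C_{\app} B_\gamma^2 B_u^2 \Big(\inf_{\gamma\in\calF_\gamma}\|\gamma-\gamma^\dagger\|_{C^1(\bU)}+\inf_{u\in\calF_u}\|u-u^\dagger\|_{H^2(U)}\Big),
\]
so the entire task is to realize good elements $\gamma\in\calF_\gamma$ and $u\in\calF_u$ of the prescribed neural network classes. Since $\gamma^\dagger\in C^{s+1}(\bU)\hookrightarrow W^{s+1,\infty}(U)$ and $u^\dagger\in H^{s+2}(U)$, both targets fall inside the Sobolev regularity framework for which Proposition~4.8 of \cite{Guhring2021Approximation} provides constructive approximants by exponential PU-admissible networks.

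Next I would invoke that proposition twice. For $\gamma^\dagger$, with source smoothness $s+1$ and target norm $W^{1,\infty}$ (which controls $\|\cdot\|_{C^1(\bU)}$ for the smooth approximants produced by the construction), it yields a network of depth $\calD_\gamma=C\log(d+s)$ and weight-magnitude bound $\calB_\gamma=C\calS_\gamma^{2s/d+7}$, with error $\lesssim\|\gamma^\dagger\|_{C^{s+1}(\bU)}\cdot\calS_\gamma^{-(s-\mu)/d}$; the small loss $\mu>0$ comes from the $L^\infty$ nature of the target norm and from the rescaling of the PU in \cite{Guhring2021Approximation}. For $u^\dagger$, with source smoothness $s+2$ and target norm $H^2(U)$, the same proposition applied in its finite-$p$ Sobolev form produces a network of depth $\calD_u=C\log(d+s+1)$ and weight bound $\calB_u=C\calS_u^{(2s+2)/d+7}$, with error $\lesssim\|u^\dagger\|_{H^{s+2}(U)}\cdot\calS_u^{-(s+1-\mu)/d}$. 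Substituting these two estimates into the display above and collecting the constants depending on $d,s,\mu,\gamma_0,\gamma_1,U$ and $B_f$ into a single $C_{\app}$ delivers the claim.

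The main obstacle, and essentially the only non-mechanical part, is matching the architectural constants in Proposition~4.8 of \cite{Guhring2021Approximation} to the exact form imposed in items (i)--(ii): the depth bounds $C\log(d+s)$ and $C\log(d+s+1)$ follow directly from their construction, but the weight-magnitude exponents $\tfrac{2s}{d}+7$ and $\tfrac{2s+2}{d}+7$ must be read off by tracking the polynomial rescaling used in their partition-of-unity step. One also needs to confirm that the approximants stay inside the boundedness box of \cref{ass:gammau:bound}, which can be arranged by clipping or by taking $B_\gamma,B_u$ large enough relative to $\|\gamma^\dagger\|_{C^{s+1}}$ and $\|u^\dagger\|_{H^{s+2}}$; these checks are standard and do not affect the rate.
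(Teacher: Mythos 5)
Your proposal is correct and follows exactly the paper's own route: the paper proves this lemma by combining \cref{lemma:app:1} with \cref{lemma:app:nn}, where the latter is precisely the two-fold application of Proposition~4.8 of \cite{Guhring2021Approximation} (once for $\gamma^{\dagger}$ in the $C^{1}$ norm, once for $u^{\dagger}$ in the $H^{2}$ norm) that you describe. Your closing remark about keeping the approximants inside the boundedness box of \cref{ass:gammau:bound} is a point the paper leaves implicit, but it does not change the argument.
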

The proof refers to \cref{sec:appendix:proof:approx}. \cref{lemma:approx} shows that as long as the number of parameters is large enough, the neural networks can approximate the ground truth conductivity and voltage with arbitrarily small errors. In addition, when the size of the neural networks is the same, the smoother the ground truth conductivity and voltage are, the smaller the approximation error will be.
\subsection{Statistical error}
\par An estimate of the statistical error is presented in this section. With reference to the definition of statistical error in \cref{lemma:err:dec}, an upper bound on the statistical error will be obtained if we can estimate the metric entropy of the hypothesis classes$\calF_{\gamma}$ and $\calF_{u}$. In \cite{anthony1999neural}, the author gives the $L^{\infty}$ covering number bounds for some commonly used classes of neural networks. However, in PDE-related problems, we need to estimate not only the $L^{\infty}$ covering number of the set of networks, but also that of the set of network derivatives. In other words, we need to bound the $W^{m,\infty}$ ($m\geq1$) covering numbers of the set of networks. In \cite{duan2022convergence,Jiao2022rate}, the derivatives of ReLU$^{k}$-networks are implemented by ReLU-type networks, whose covering numbers can be bounded via \cite{anthony1999neural}. Unfortunately, this approach can not be extended to networks with other activation functions. In \cite{jiao2021error,Jin2022imaging}, the covering numbers of the derivatives of the networks are estimated by converting them to the covering numbers of the parameter class, which is a compact subset of $\bbR^{n}$, by Lipschitz continuity. This method requires some tedious manipulations. In this paper, we consider the classes of neural networks as relatively compact subsets of the smooth function spaces. We then estimate their metric entropy using the well-known entropy bounds for classes of smooth functions.
\begin{lemma}[Statistical error]\label{lemma:stat}
Let $\mu>0$, $s\in\bbN_{+}$ and $\calS\in\bbN_{+}$. Suppose $U\subset\subset(0,1)^{d}$ be a domain with $C^{\infty}$-boundary. Let $\varrho$ be an activation function which, together with its up to third-order derivatives, are continuous and uniformly bounded. Suppose \cref{ass:gammau:bound} is fulfilled. Set
\begin{enumerate}[(i)]
\item $\calF_{\gamma}=\calN_{\varrho}(\calD_{\gamma},\calS,\calB_{\gamma})$ with $\calD_{\gamma}=C\log(d+s)$ and $\calB_{\gamma}=C\calS^{\frac{2s}{d}+7}$, and
\item $\calF_{u}=\calN_{\varrho}(\calD_{u},\calS,\calB_{u})$ with $\calD_{u}=C\log(d+s+1)$ and $\calB_{u}=C\calS^{\frac{2s+2}{d}+7}$,
\end{enumerate}
where the constant $C$ depends on $d$, $s$, $\mu$ and $U$. Then
\begin{align*}
&\inf_{\varepsilon>0}\Big\{(B_{\gamma}^{2}B_{u}^{2}+B_{f}^{2}+\delta^{2})(H_{\gamma}+H_{u})n^{-1}+(\delta+B_{\gamma}B_{u})(B_{\gamma}+B_{u})\varepsilon\Big\} \\
&\leq C_{\sta}\cdot\calS^{\frac{6(4d+s+1)}{d+1}\log^{3}(d+s+1)}n^{-\frac{1}{d+1}},
\end{align*}
where $c$ is an absolute constant and $C_{\sta}$ is a constant depending on $d$, $s$, $\varrho$, $\delta$, $B_{\gamma}$, $B_{u}$ and $U$.
\end{lemma}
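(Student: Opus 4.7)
The plan is to bound both metric entropies $H_\gamma^\varepsilon$ and $H_u^\varepsilon$ by embedding the network classes into balls in smooth function spaces and applying the classical Kolmogorov--Tikhomirov entropy bound, and then to optimize the resulting expression over $\varepsilon$. Since the norms appearing in \cref{lemma:err:dec} are $W^{1,\infty}$ for $\gamma$ and $W^{2,\infty}$ for $u$, I would embed $\calF_\gamma$ in a $C^2(\bU)$-ball and $\calF_u$ in a $C^3(\bU)$-ball, so that third-order derivatives have to be propagated through the networks; the hypothesis that $\varrho,\varrho',\varrho'',\varrho'''$ are continuous and uniformly bounded is exactly what makes this feasible.

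The first step is a layer-by-layer control of $\|\phi\|_{C^r(\bU)}$ for $\phi\in\calN_\varrho(\calD,\calS,\calB)$ with $r\leq3$. Iterating the multivariate Faà di Bruno formula along the $\calD$ compositional layers, and using the uniform bounds on $\varrho,\ldots,\varrho'''$, standard combinatorial bookkeeping produces an estimate of the form $\|\phi\|_{C^r(\bU)}\leq C_{r,d,\varrho}\,(\calS\calB)^{C r\calD}$, with a constant depending only on $r,d,\varrho$. Substituting the prescribed parameters $\calD=C\log(d+s+1)$ and $\calB=C\calS^{O(s/d)}$ then supplies a common $C^r$-radius bound of the shape $M\leq \calS^{c(d,s)}$, with $c(d,s)$ polynomial in $(d+s+1)/(d+1)$ and polylogarithmic in $d+s+1$.

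Next I would invoke the Kolmogorov--Tikhomirov entropy estimate: on a bounded $d$-dimensional domain, the $\varepsilon$-metric entropy of the $C^r$-ball of radius $M$ in the $C^m$-norm satisfies $H(\varepsilon)\leq C(M/\varepsilon)^{d/(r-m)}$. Taking $(r,m)=(2,1)$ for $\calF_\gamma$ and $(r,m)=(3,2)$ for $\calF_u$ gives $H_\gamma^\varepsilon+H_u^\varepsilon\leq C(M/\varepsilon)^{d}$. Writing $A:=B_\gamma^{2}B_u^{2}+B_f^{2}+\delta^{2}$ and $B:=(\delta+B_\gamma B_u)(B_\gamma+B_u)$, the infimum to be bounded becomes
\[
\inf_{\varepsilon>0}\bigl\{C\,AM^{d}\,\varepsilon^{-d}n^{-1}+B\varepsilon\bigr\},
\]
whose minimizer $\varepsilon^{\star}\sim(AM^{d}/(Bn))^{1/(d+1)}$ yields the rate $A^{1/(d+1)}B^{d/(d+1)}M^{d/(d+1)}n^{-1/(d+1)}$. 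Folding the polynomial-in-$\calS$ factors from $A$ and $B$ together with $M^{d/(d+1)}$ into a single $\calS$-exponent produces the stated bound $C_\sta\,\calS^{6(4d+s+1)\log^{3}(d+s+1)/(d+1)}\,n^{-1/(d+1)}$.

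The main obstacle is the $C^3$-norm bound for deep networks in Step 1: the third-derivative Faà di Bruno expansion applied to $\calD$ compositions produces sums over set partitions whose cardinalities grow with $\calD$, and it is this combinatorial growth, coupled with the logarithmic depth $\calD=O(\log(d+s+1))$ and the exponents hidden in $\calB$, that is responsible for the polylogarithmic exponent in $d+s+1$ in the final bound. Once the $C^r$-radius is polynomial in $\calS$, the Kolmogorov--Tikhomirov entropy estimate and the single-variable minimization over $\varepsilon$ are entirely routine.
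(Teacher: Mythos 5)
Your proposal follows essentially the same route as the paper: layer-by-layer $C^r$-norm bounds on the networks, embedding $\calF_{\gamma}$ and $\calF_{u}$ into $C^{2}$- and $C^{3}$-balls, the Kolmogorov--Tikhomirov entropy bound for balls of smooth functions (which the paper derives from the $C^{0}$ case via an extension lemma), and the one-variable optimization over $\varepsilon$. The only slip is the intermediate bound $(\calS\calB)^{Cr\calD}$: the exponent actually grows like $\calD^{r}$ rather than $r\calD$ (the paper gets roughly $\calB^{3\calD^{3}}$ for the third derivative), which is precisely the source of the $\log^{3}(d+s+1)$ factor that you correctly identify in your closing paragraph.
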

The proof refers to \cref{sec:appendix:proof:staerr}. Note that both \texttt{sigmoid} and \texttt{tanh} satisfy the conditions in \cref{lemma:stat}, as shown in \cref{tab:exp:pu:activation}.
\subsection{Convergence rate}
\par Up to now, we have bounded the approximation error and the statistical error, respectively. \cref{lemma:approx,lemma:stat} show that the approximation error will decrease with the size of the neural networks, while at the same time the statistical error will increase. By making a trade-off between the two, we obtain the following theorem.
\begin{theorem}[Convergence rate]\label{thm:rate}
Under the noise model \cref{eq:noise:model}. Let $\mu>0$, $d,s\in\mathbb{N}_{+}$ and $U\subset\subset(0,1)^{d}$ be a domain with $C^{\infty}$-boundary. Suppose $\gamma^{\dagger}\in C^{s+1}(\bU)$ and $u^{\dagger}\in H^{s+2}(U)$. Let $\varrho$ be an exponential PU-admissible activation function which, together with its up to third-order derivatives, are continuous and uniformly bounded. Suppose \cref{ass:fg:bound,ass:gammau:bound} are fulfilled. Set $\calF_{\gamma}=\calF_{u}=\calN_{\varrho}(\calD,\calS,\calB)$ with
\begin{equation*}
\calD=C(d,s,\mu,U),\quad\calS=\calO\Big(n^{\frac{1}{6(4d+s+1)\log^{3}(d+s+1)}}\Big)\quad\text{and}\quad\calB=\calO\Big(n^{\frac{s+7d}{3d(4d+s+1)\log^{3}(d+s+1)}}\Big).
\end{equation*}
Then it follows for each estimator $(\tilde{\gamma}_{n},\tilde{u}_{n})$ taking values in $\calF_{\gamma}\times\calF_{u}$ that
\begin{equation*}
R(\tilde{\gamma}_{n},\tilde{u}_{n})=\calO\Big(n^{-\frac{s-\mu}{7d(4d+s+1)\log^{3}(d+s+1)}}\Big).
\end{equation*}
\end{theorem}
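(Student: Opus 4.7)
The plan is to start from the error decomposition \cref{lemma:err:dec}, substitute the two bounds established in \cref{lemma:approx} and \cref{lemma:stat}, and then tune the complexity parameter $\calS$ (together with the derived weight bound $\calB$) so as to balance the approximation error against the statistical error. Since the statement explicitly sets aside the optimization error, the term $\Delta_n(\tilde\gamma_n,\tilde u_n)$ in \cref{lemma:err:dec} is simply dropped.

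First, I would observe that with $\calF_\gamma=\calF_u=\calN_\varrho(\calD,\calS,\calB)$ and $\calD$ chosen as the maximum of the two depths $C\log(d+s)$ and $C\log(d+s+1)$ appearing in the preceding lemmas (still a constant depending only on $d,s,\mu,U$), the hypotheses of both \cref{lemma:approx} and \cref{lemma:stat} are met. The weight bound $\calB = \calO\bigl(n^{(s+7d)/[3d(4d+s+1)\log^{3}(d+s+1)]}\bigr)$ stated in the theorem is chosen to dominate both polynomial-in-$\calS$ prescriptions $C\calS^{2s/d+7}$ and $C\calS^{(2s+2)/d+7}$, once $\calS$ is expressed as a power of $n$. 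This yields the two ingredients
\begin{equation*}
\text{(approx)} \;\lesssim\; C_{\app}\, B_\gamma^2 B_u^2\,\bigl(\|\gamma^\dagger\|_{C^{s+1}(\bU)}+\|u^\dagger\|_{H^{s+2}(U)}\bigr)\,\calS^{-(s-\mu)/d},
\end{equation*}
\begin{equation*}
\text{(stat)} \;\lesssim\; C_{\sta}\,\calS^{\,6(4d+s+1)\log^{3}(d+s+1)/(d+1)}\,n^{-1/(d+1)},
\end{equation*}
where $B_\gamma,B_u$, the Sobolev norms of $\gamma^\dagger,u^\dagger$, and the constants $C_{\app},C_{\sta}$ are all independent of $n$ under the stated assumptions.

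Next, I would balance the two bounds. Setting them equal and solving for $\calS$ gives $\calS \asymp n^{\beta}$ with
\begin{equation*}
\beta = \frac{1}{(d+1)\bigl[(s-\mu)/d + 6(4d+s+1)\log^{3}(d+s+1)/(d+1)\bigr]},
\end{equation*}
and since the $\log^{3}$ term dominates the denominator, a convenient suboptimal choice is $\calS = \calO\bigl(n^{1/[6(4d+s+1)\log^{3}(d+s+1)]}\bigr)$, matching the prescription of the theorem. Plugging this back into the approximation bound gives $\calS^{-(s-\mu)/d} = n^{-(s-\mu)/[6d(4d+s+1)\log^{3}(d+s+1)]}$, and the statistical bound is of the same order by construction. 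Inflating the constant from $6$ to $7$ in the exponent comfortably absorbs the polynomial-in-$\calS$ prefactor $B_\gamma^2 B_u^2$ coming from the approximation lemma and the hidden constants from the statistical lemma, yielding the announced rate $R(\tilde\gamma_n,\tilde u_n) = \calO\bigl(n^{-(s-\mu)/[7d(4d+s+1)\log^{3}(d+s+1)]}\bigr)$.

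The principal bookkeeping obstacle is verifying, in Step~2, that the single choice of $(\calD,\calS,\calB)$ in the theorem simultaneously fits the slightly different prescriptions made for $\calF_\gamma$ and $\calF_u$ in \cref{lemma:approx} and \cref{lemma:stat}: the two lemmas use different Sobolev smoothness indices ($s+1$ versus $s+2$), different weight-bound exponents, and their depth constants differ by a $\log$ factor. Taking the worse of each pair is sufficient but must be done carefully so that the ambient constants $B_\gamma,B_u$ of \cref{ass:gammau:bound} genuinely remain fixed. The second delicate point is handling the polynomial prefactor $B_\gamma^2 B_u^2$ in the approximation bound: although it is $n$-independent, one must ensure that its magnitude together with the $\|\gamma^\dagger\|_{C^{s+1}(\bU)}+\|u^\dagger\|_{H^{s+2}(U)}$ factor is absorbed into the $\calO(\cdot)$, which is why the final theorem quotes only the leading $n$-power. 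Beyond these, the argument is a direct assembly of the three preceding lemmas and an elementary exponent-matching calculation.
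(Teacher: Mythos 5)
Your proposal is correct and is essentially the paper's own argument: the paper gives no separate proof of \cref{thm:rate} beyond invoking \cref{lemma:err:dec,lemma:approx,lemma:stat} and "making a trade-off," which is exactly your assembly, and your balanced exponent $\beta$ and the $6\to 7$ slack to absorb the $(d+1)(s-\mu)/d$ contribution and the prefactors are the right bookkeeping. One sentence deserves care: at the boundary value $\calS\asymp n^{1/[6(4d+s+1)\log^{3}(d+s+1)]}$ the statistical bound of \cref{lemma:stat} is only $\calO(1)$, so the actual choice must be the strictly smaller balanced $n^{\beta}$ (which is still $\calO$ of the theorem's prescription), and it is with that choice that both terms match the claimed rate.
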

\par Theoretical guarantees for deep learning-based methods for solving PDEs have attracted considerable attention. In \cite{duan2022convergence,Jiao2022rate,lu2022machine}, the convergence rate of deep Ritz methods and PINNs with ReLU$^{k}$ networks is presented. In particular, the minimax optimal rate is proved in \cite{lu2022machine}. An error estimate of deep Ritz method with different activation functions is given in \cite{jiao2021error}. In addition, \cite{lu2021Priori} gives an error analysis for deep Ritz methods for eigenvalue problems. However, limited work has been done on error analysis of deep learning based methods for inverse problems. In \cite{Jin2022imaging}, a convergence rate is proposed for a neural network method for solving CDII. To the best of our knowledge, \cref{thm:rate} is the first rigorous theoretical analysis of PINNs for reconstructing non-constant coefficients.
\par \cref{thm:rate} demonstrates the consistency of the expected excess risk, that is, both the approximation error and the statistical error in \cref{lemma:err:dec} vanish as the sample size $n\rightarrow\infty$, provided the number of parameters in networks is large enough. In addition, the convergence rate  $\simeq\calO(n^{-\frac{s}{c\cdot d^{2}}})$ depends on the dimension $d$ and the smoothness $s$ of the ground truth conductivity and voltage. We first consider the dependence of the convergence rate on $d$. Note that the rate $\calO(n^{-\frac{1}{c\cdot d^{2}}})$ in \cref{thm:rate} is slower than $\calO(n^{-\frac{1}{c\cdot d}})$ in \cite{Jin2022imaging}, which is because the statistical error bound in \cref{lemma:stat} is not sharp enough. However, the number of parameters required in this paper $\calO(n^{\frac{1}{c\cdot d}})$ is significantly smaller than that $\calO(n^{\frac{1}{c}})$ in \cite{Jin2022imaging} in high-dimensional problems. Furthermore, taking the smoothness into consideration, the rate $\simeq\calO(n^{-\frac{s}{c\cdot d^{2}}})$ shows that our approach may overcome the curse of dimensionality, provided the ground truth functions are smooth enough.

\begin{remark}
The analysis in this work focuses on the error estimation to the cost functional, which is applicable to any regularization parameter $\alpha$. If the error analysis to the conductivity $\gamma$ is needed, we need to choose the regularization parameter $\alpha$ properly, to ensure the stability of unknown conductivity from the cost functional with respect to the noise level. In that case, the number of samples should also depend on the noise level. We will leave the analysis to the  conductivity $\gamma$ in the future work. 
\end{remark}

\section{Numerical experiments and discussions}\label{simu}
\par Let $U=(0,1)^{2}$. We use the two-to-one voltage potential of $f$, which equals the trace of the harmonic function $u_{h}(x,y)=y$. Given an admissible pair $(\gamma,f)$, we solve numerically the problem
\begin{equation}\label{eq:numer}
\nabla\cdot(\gamma\nabla u)=0,\left.\quad u\right|_{\partial \Omega}=f.
\end{equation}
Once the solution $u$ is found, the interior data $a=\gamma|\nabla u|$ are computed. Then the noisy data $a^{\delta}$ is generated by adding Gaussian random noise 
\begin{equation*}
a^{\delta}(x)=a^{\dagger}(x)+\delta\cdot a^{\dagger}(x)\xi(x),\quad\xi(x)\sim N(0,1).
\end{equation*}
\par In all of our experiments, we parameterize networks $\gamma_{\phi}$ and $u_{\theta}$ as four-layer \texttt{tanh}-MLP with width 32 (or 64), of which the parameters are initialized by Xavier's method \cite{Glorot2010Understanding}. For each example, we use $n=100000$ interior measurement data points and $n=100000$ boundary points and run \cref{alg:CDII} for 50000 epochs with batch size 2048. We minimize the loss by ADAM \cite{kingma2015adam}, and the learning rate is set as $1.0\times10^{-3}$, which does not change during the training. Thanks to the robustness of our methods, there is no need for denoising and other preprocessing to the noisy measurements. We use the relative $L^{2}$-error $\err(\hat{\gamma})=\|\gamma^{\dagger}-\hat{\gamma}\|_{2}/\|\gamma^{\dagger}\|_{2}$ to measure the accuracy of the reconstruction $\hat{\gamma}$. In \cref{example:fourmode,example:dc}, we use $L^{2}$-regularization $\psi(\gamma)=\|\gamma\|_{L^{2}(U)}^{2}$ with regularization parameter $\alpha=1.0\times10^{-5}$, while both $L^{2}$ and TV regularization $\psi(\gamma)=|\nabla\gamma|_{L^{1}(U)}$ are applied in \cref{example:disjoint} with $\alpha=1.0\times10^{-3}$ and $\alpha=1.0\times10^{-3}$, respectively.
\par We solve \cref{eq:numer} by MATLAB 2022b. Our models are implemented by PyTorch 12.1 \cite{pytorch}, and trained with one NVIDIA Tesla V100 GPU.
\begin{example}\label{example:fourmode}
We employ the four-mode model conductivity distribution \cite{Nachman2009Recovering}
\begin{equation*}
\gamma^{\dagger}(x,y)=1+\gamma_{s}(x,y),
\end{equation*}
where $\gamma_{s}$ is a function with support in $U$, which is given by
\begin{equation*}
\begin{aligned}
\gamma_{s}(x,y)&=0.3 \cdot(a(x, y)-b(x, y)-c(x, y)), \\
a(x,y)&=0.3\cdot(1-3(2x-1))^{2}\cdot\exp[-9\cdot(2x-1)^{2}-(6y-2)^{2}], \\
b(x,y)&=\Big(\frac{3(2 x-1)}{5}-27 \cdot(2 x-1)^3-(3 \cdot(2 y-1))^5\Big)\exp[-(9\cdot(2x-1)^2+9\cdot(2y-1)^2)], \\
c(x,y)&=\exp[-(3\cdot(2x-1)+1)^{2}-9\cdot(2y-1)^{2}].
\end{aligned}
\end{equation*}
\end{example}
\par We first apply our proposed method to the reconstruction of conductivity and voltage from data with 1\% noise. \cref{fig:example1:iter} displays the reconstructed conductivity obtained by training the neural network for different epochs. Our results demonstrate that a rough approximation of the conductivity can be obtained after a few epochs, while further training is required to capture the fine-scale details. More exactly, we find that the neural network first learns to capture the low-frequency features, before fitting the high-frequency information. These findings align with the frequency principle \cite{John2020Frequency}.
\par \cref{tab:example1:error} presents the relative errors of recovered conductivity and voltage from data with different noise levels, showing that our method is robust to the noise. \cref{fig:example1:a} compares the measurement and recovered data at different noise levels. The reconstructed conductivity and voltage from the data at different noise levels and their point-wise absolute error are shown in \cref{fig:example1:gamma,fig:example1:u}.
\begin{figure}
\centering
\subfloat[\texttt{epoch=100}]
{\includegraphics[width=0.30\linewidth]{./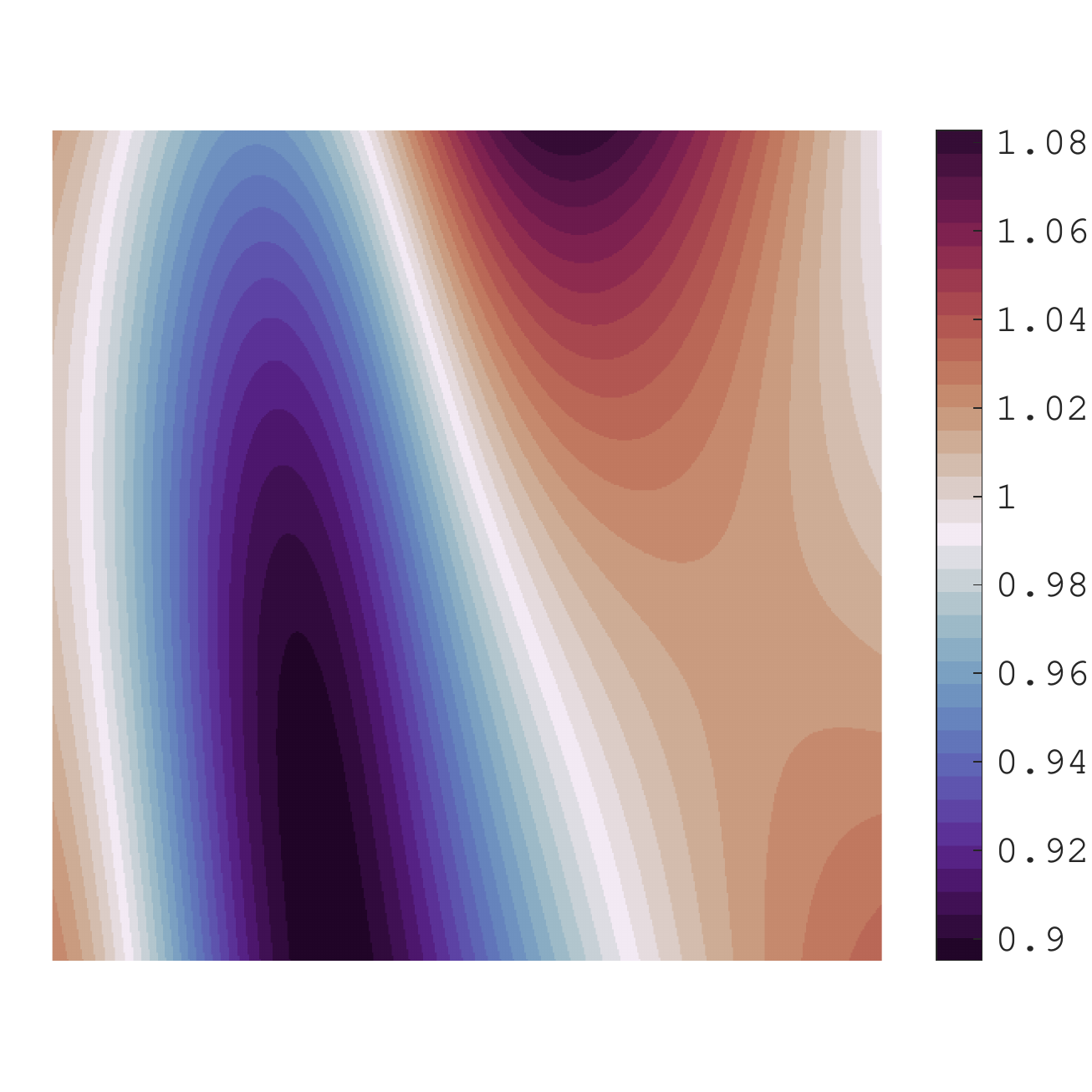}}
\subfloat[\texttt{epoch=500}]
{\includegraphics[width=0.30\linewidth]{./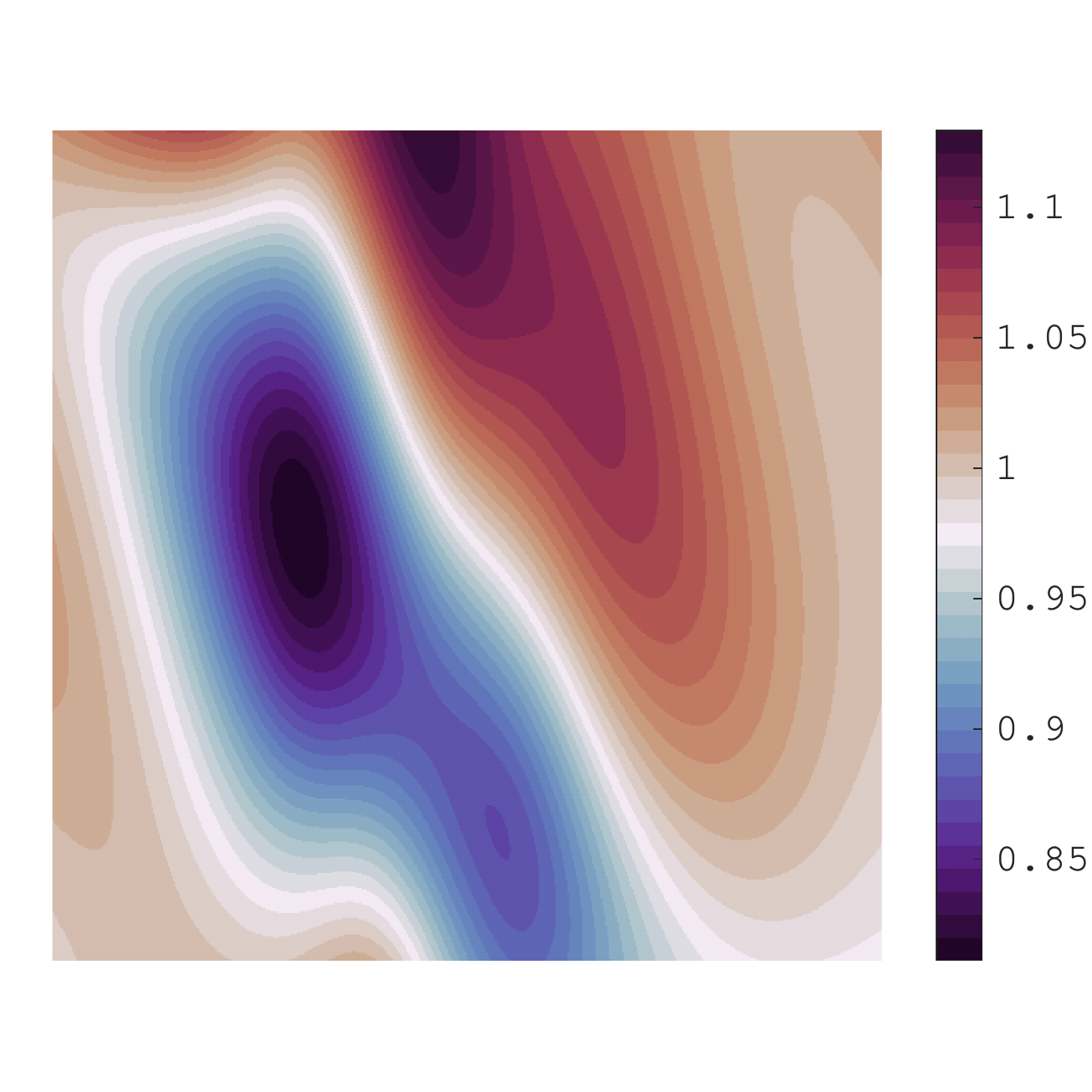}} 
\subfloat[\texttt{epoch=1000}]
{\includegraphics[width=0.30\linewidth]{./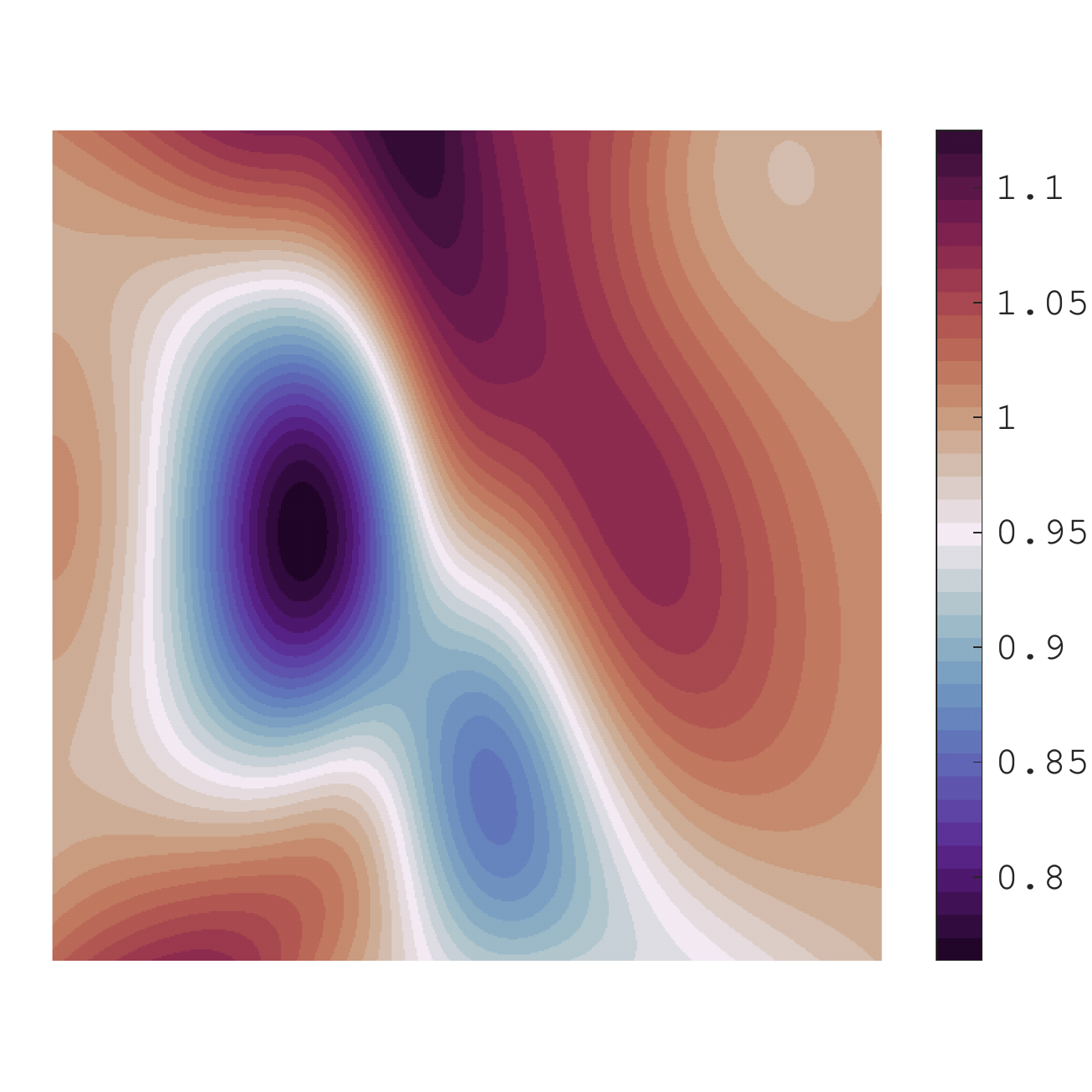}} 
\\
\subfloat[\texttt{epoch=5000}]
{\includegraphics[width=0.30\linewidth]{./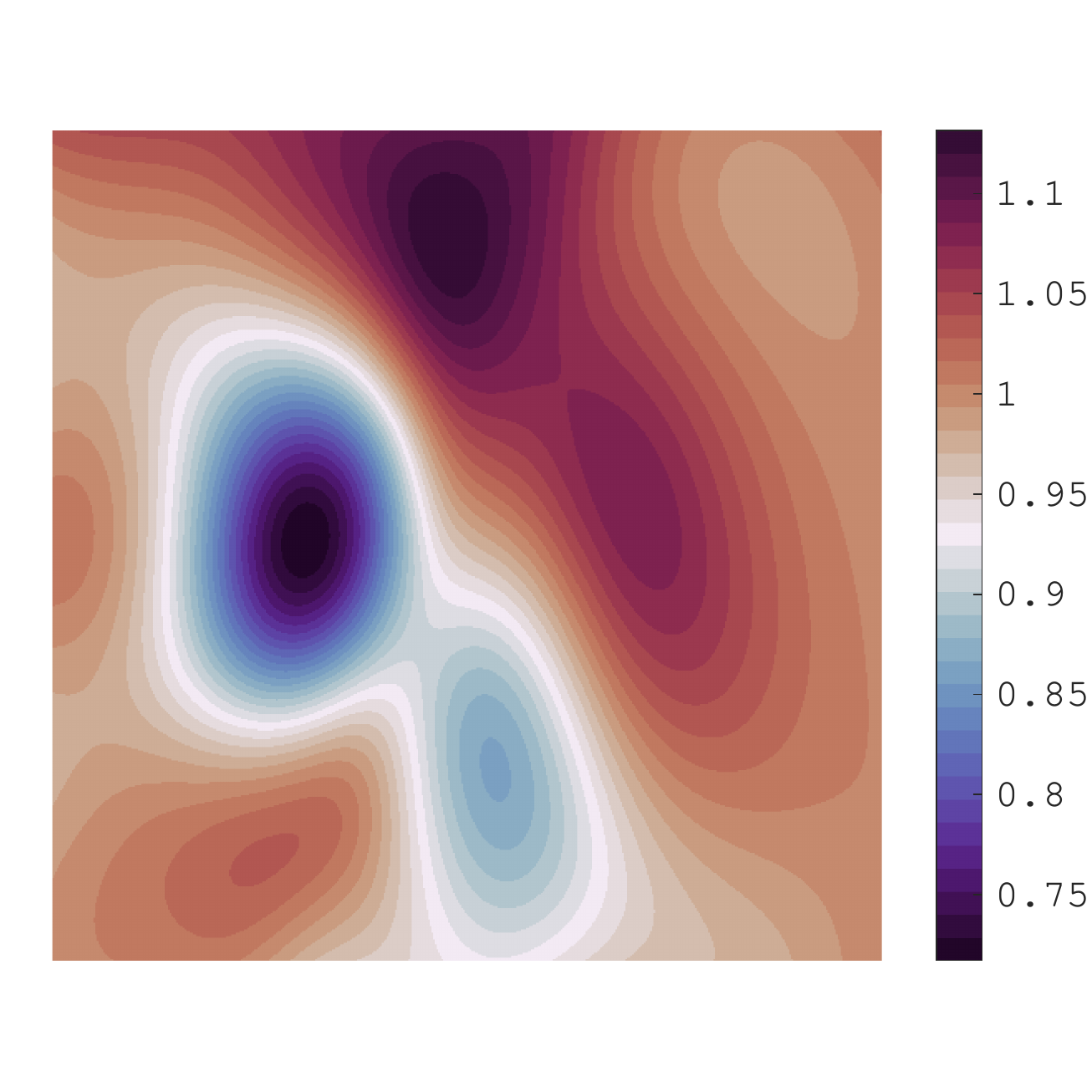}}
\subfloat[\texttt{epoch=10000}]
{\includegraphics[width=0.30\linewidth]{./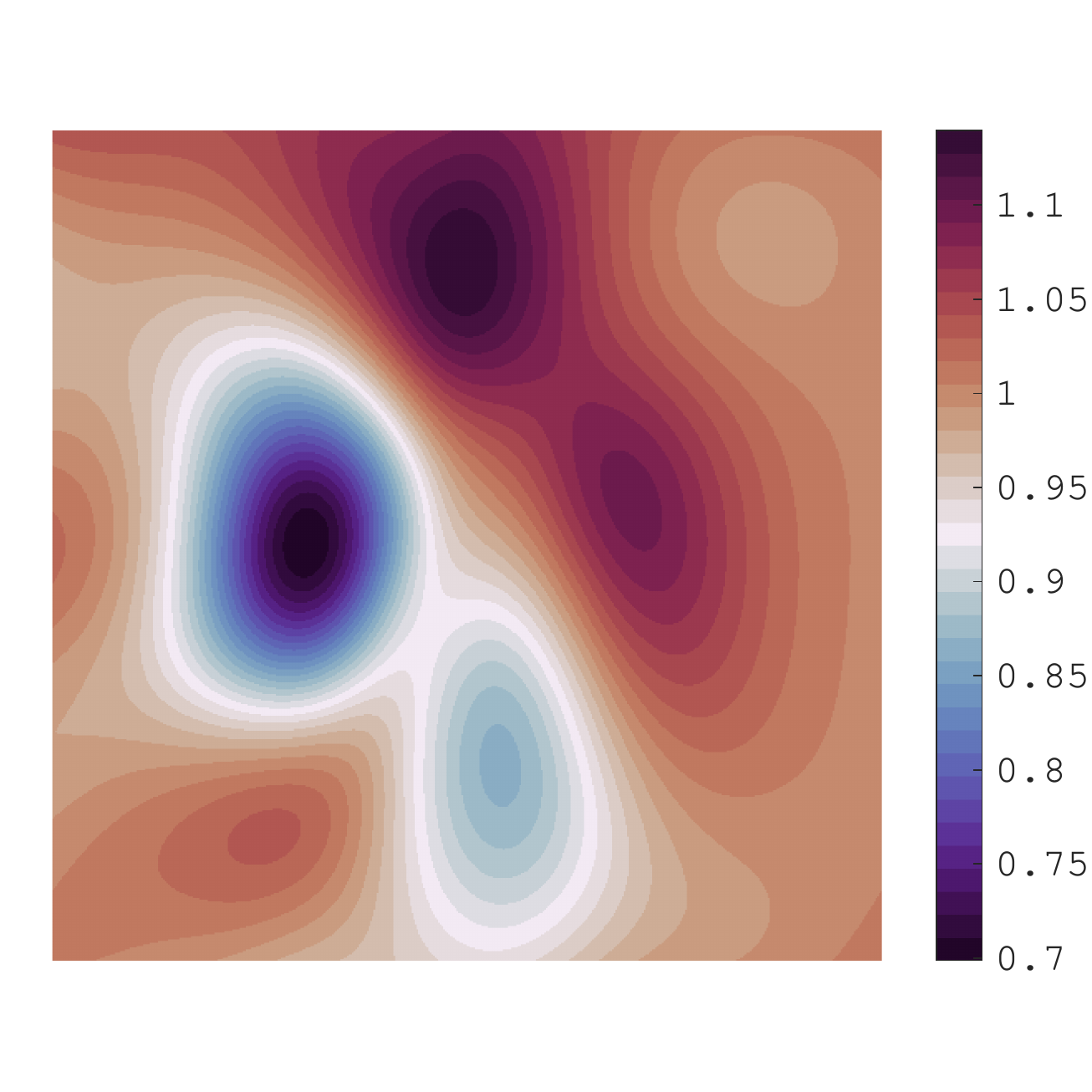}} 
\subfloat[\texttt{epoch=50000}]
{\includegraphics[width=0.30\linewidth]{./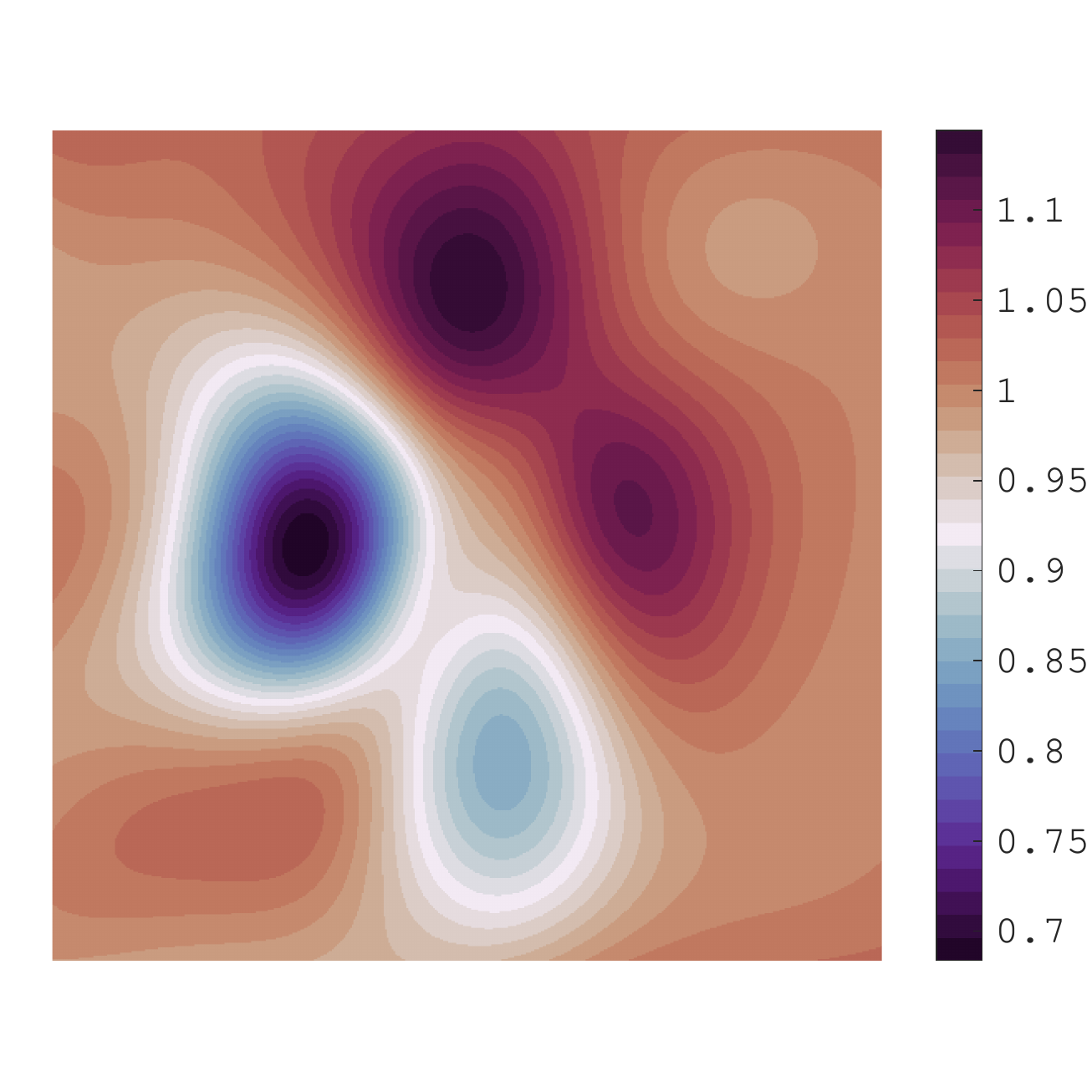}} 
\caption{The reconstructions of conductivity after different epochs. $\delta=1\%$}
\label{fig:example1:iter}
\end{figure}
\begin{table}
\caption{The relative $L^{2}$-error of the recovered conductivity $\hat{\gamma}$ and voltage $\hat{u}$ in \cref{example:fourmode}.}
\centering
\begin{tabular}{lccc}
\toprule
& \multicolumn{3}{c}{$\delta$}   \\
\cmidrule(lr){2-4}
 & $1\%$ & $10\%$ & $20\%$  \\
\cmidrule{1-4}
$\err(\gamma)$ & $2.86 \times 10^{-2}$ & $3.22 \times 10^{-2}$ & $3.75 \times 10^{-2}$  \\
$\err(u)$ & $3.03 \times 10^{-3}$ & $3.85 \times 10^{-3}$ & $4.22 \times 10^{-3}$  \\
\bottomrule
\end{tabular}
\label{tab:example1:error}
\end{table}
\begin{figure}
\centering
\subfloat[$a^{\dagger}$]
{\includegraphics[width=0.30\linewidth]{./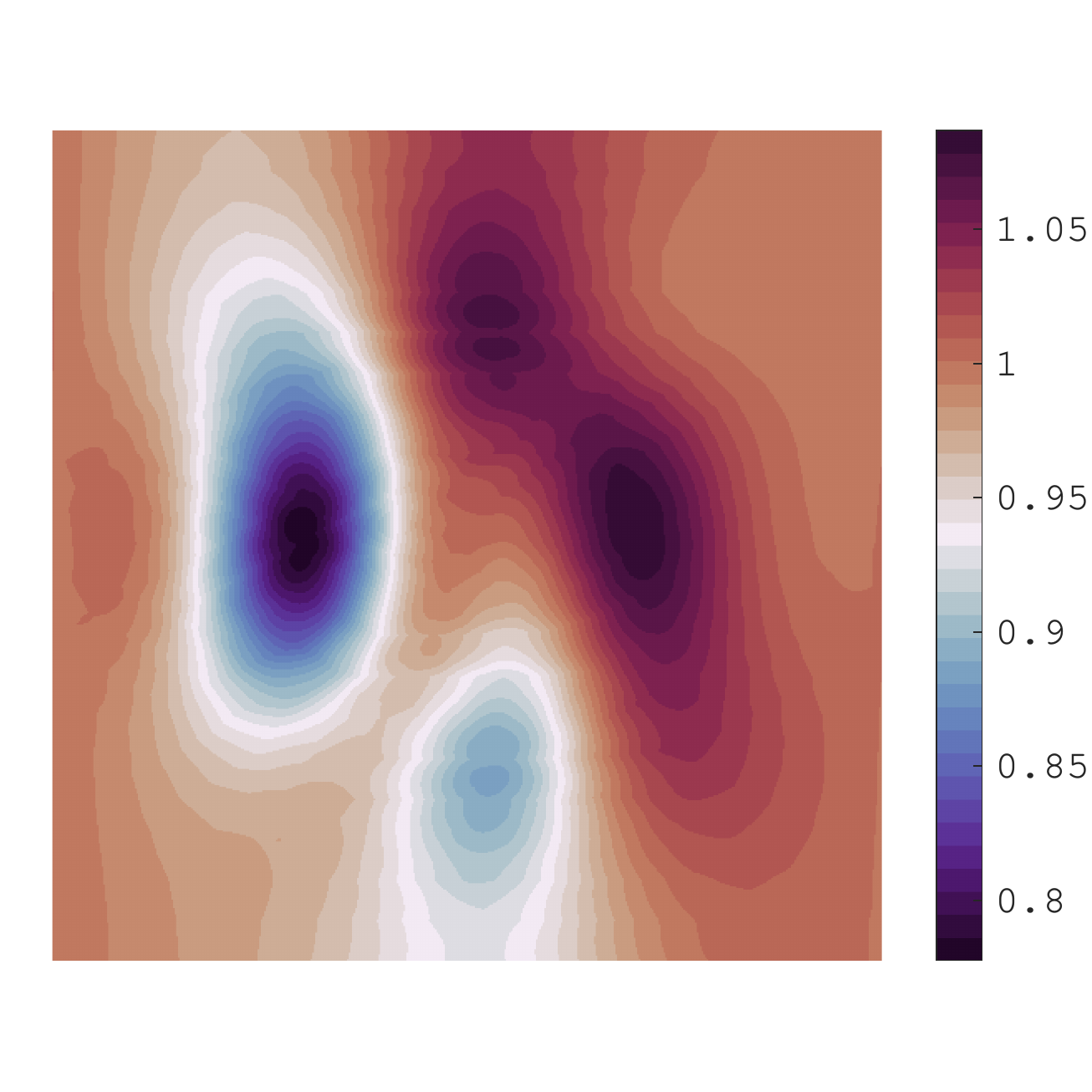}}
\subfloat[$a^{\delta}$]
{\includegraphics[width=0.30\linewidth]{./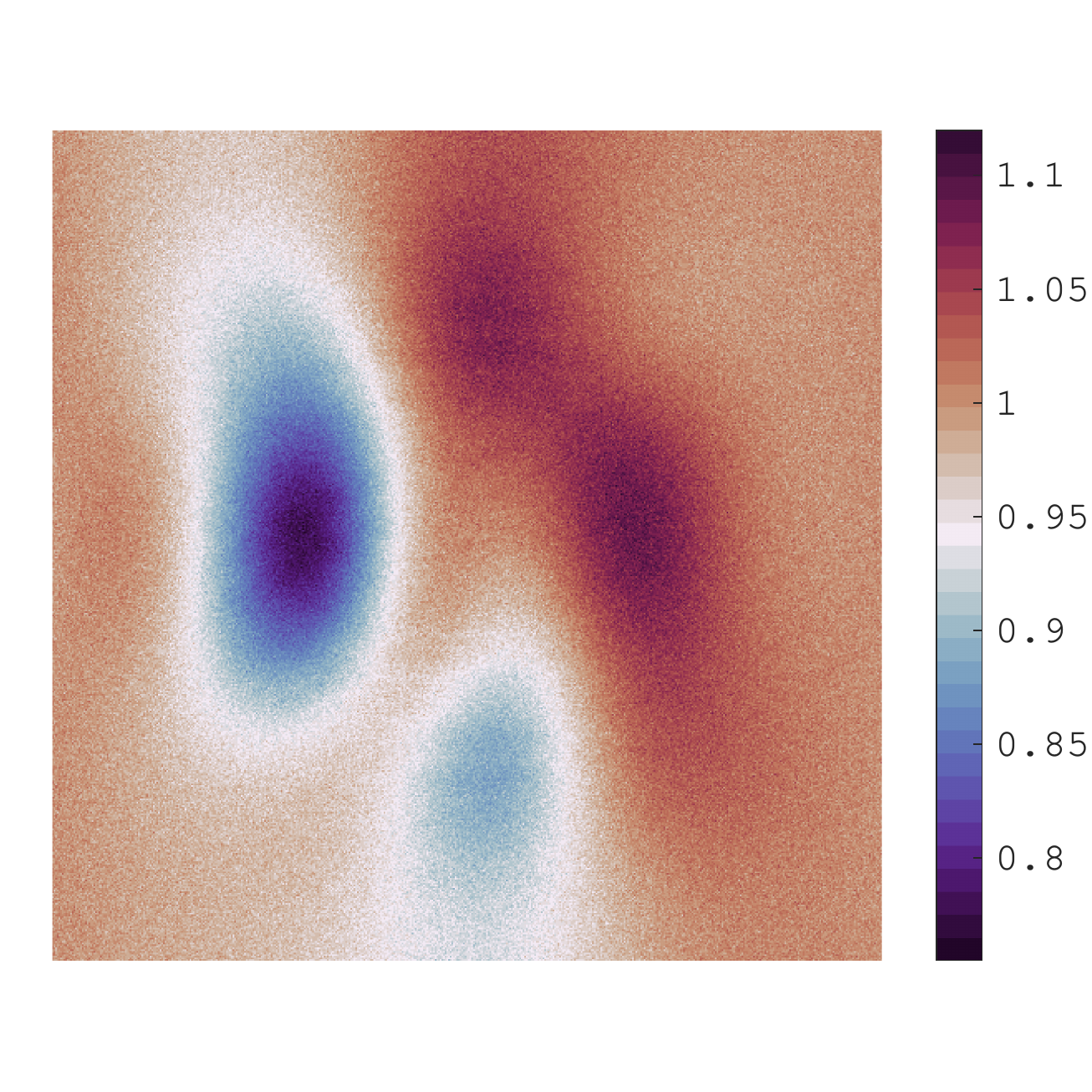}} 
\subfloat[$\hat{a}$]
{\includegraphics[width=0.30\linewidth]{./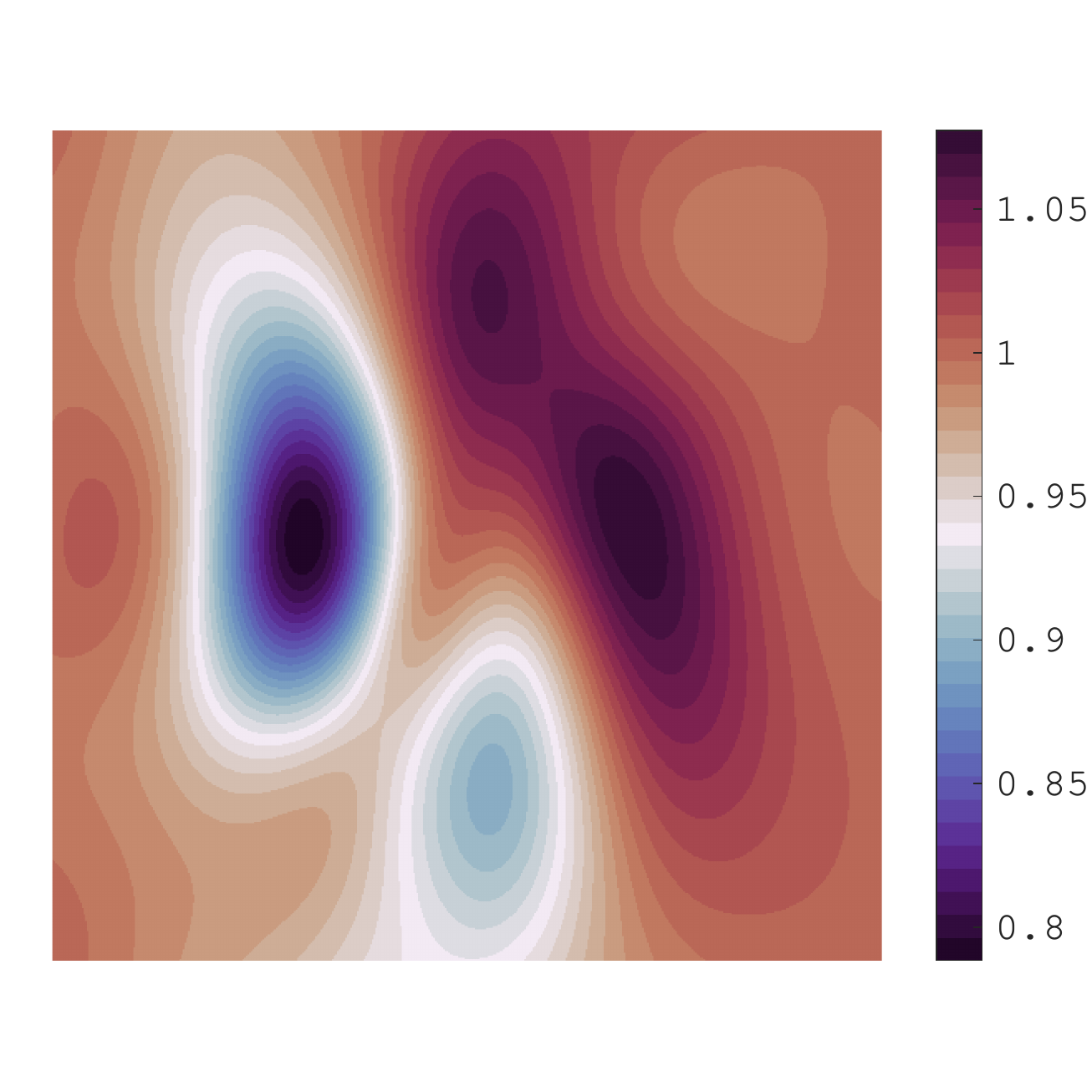}} 
\\
\subfloat[$a^{\dagger}$]
{\includegraphics[width=0.30\linewidth]{./figures/example01observations.pdf}}
\subfloat[$a^{\delta}$]
{\includegraphics[width=0.30\linewidth]{./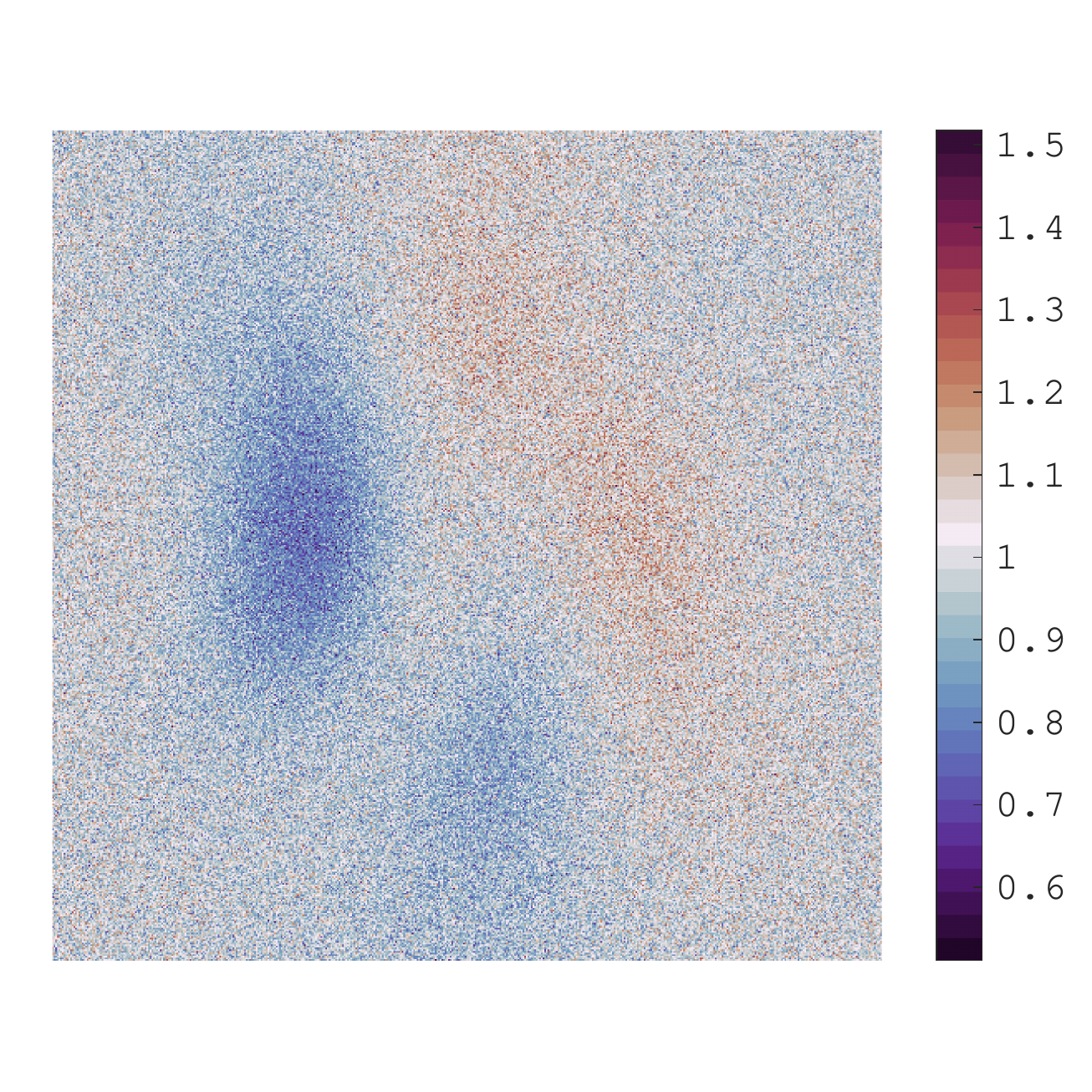}} 
\subfloat[$\hat{a}$]
{\includegraphics[width=0.30\linewidth]{./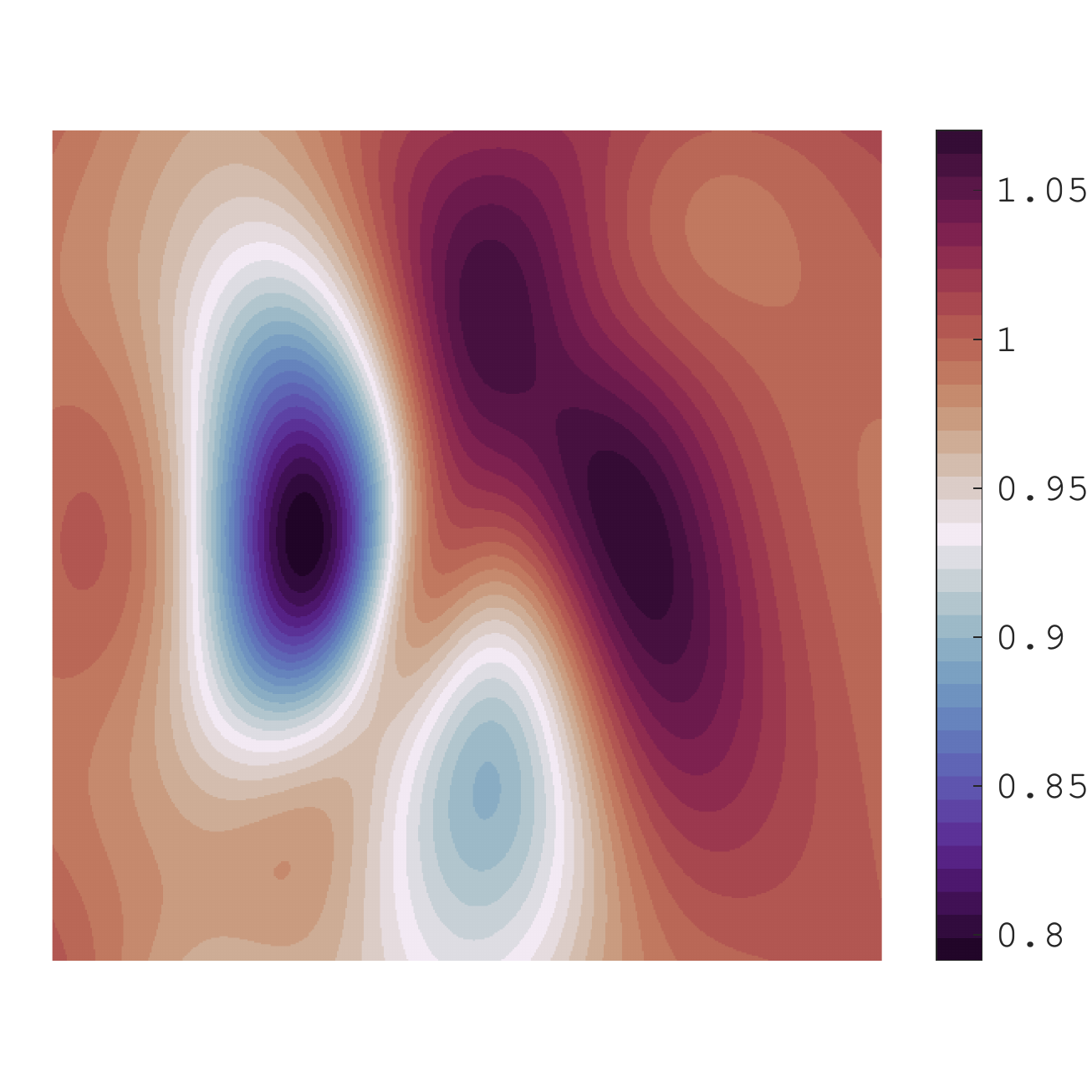}} 
\\
\subfloat[$a^{\dagger}$]
{\includegraphics[width=0.30\linewidth]{./figures/example01observations.pdf}}
\subfloat[$a^{\delta}$]
{\includegraphics[width=0.30\linewidth]{./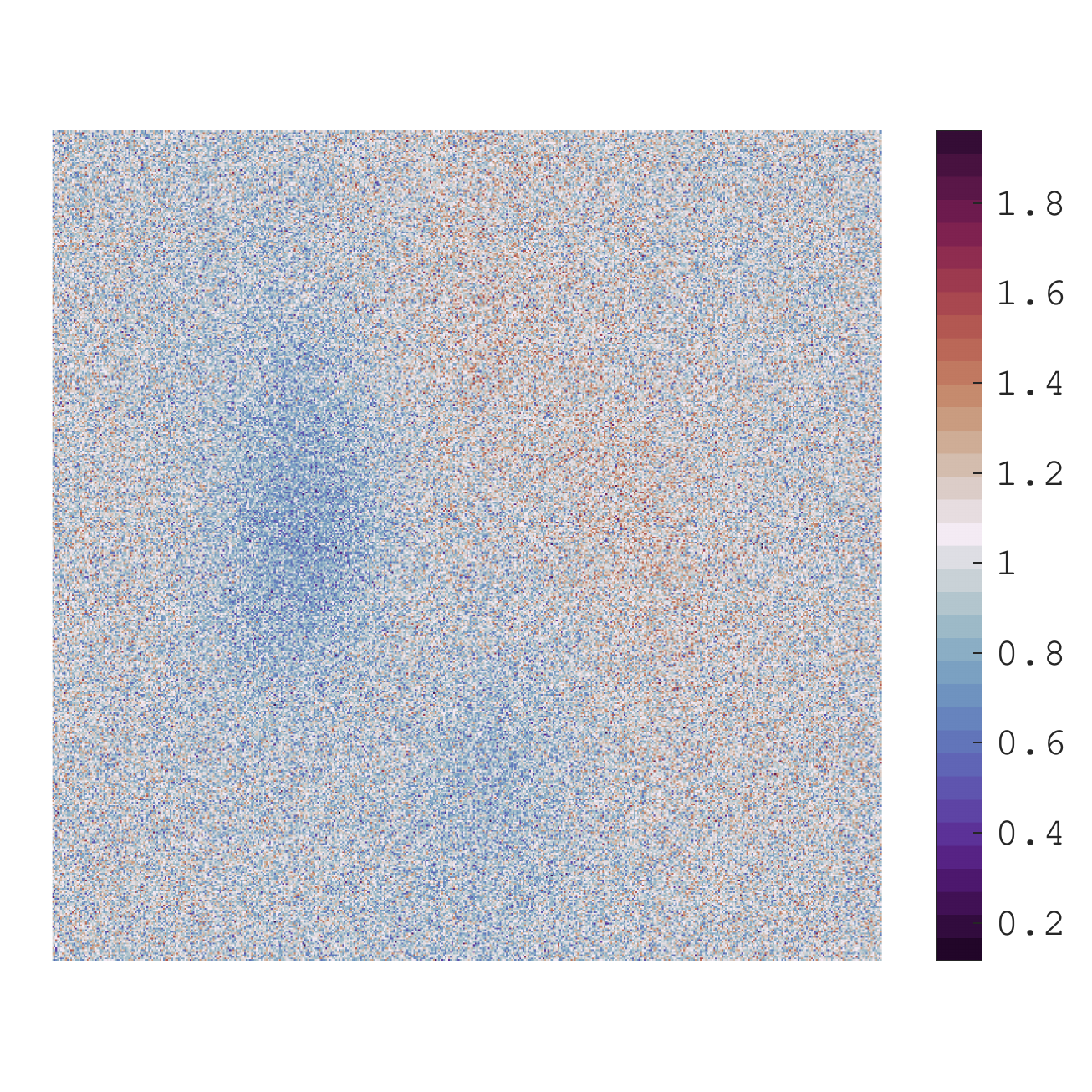}} 
\subfloat[$\hat{a}$]
{\includegraphics[width=0.30\linewidth]{./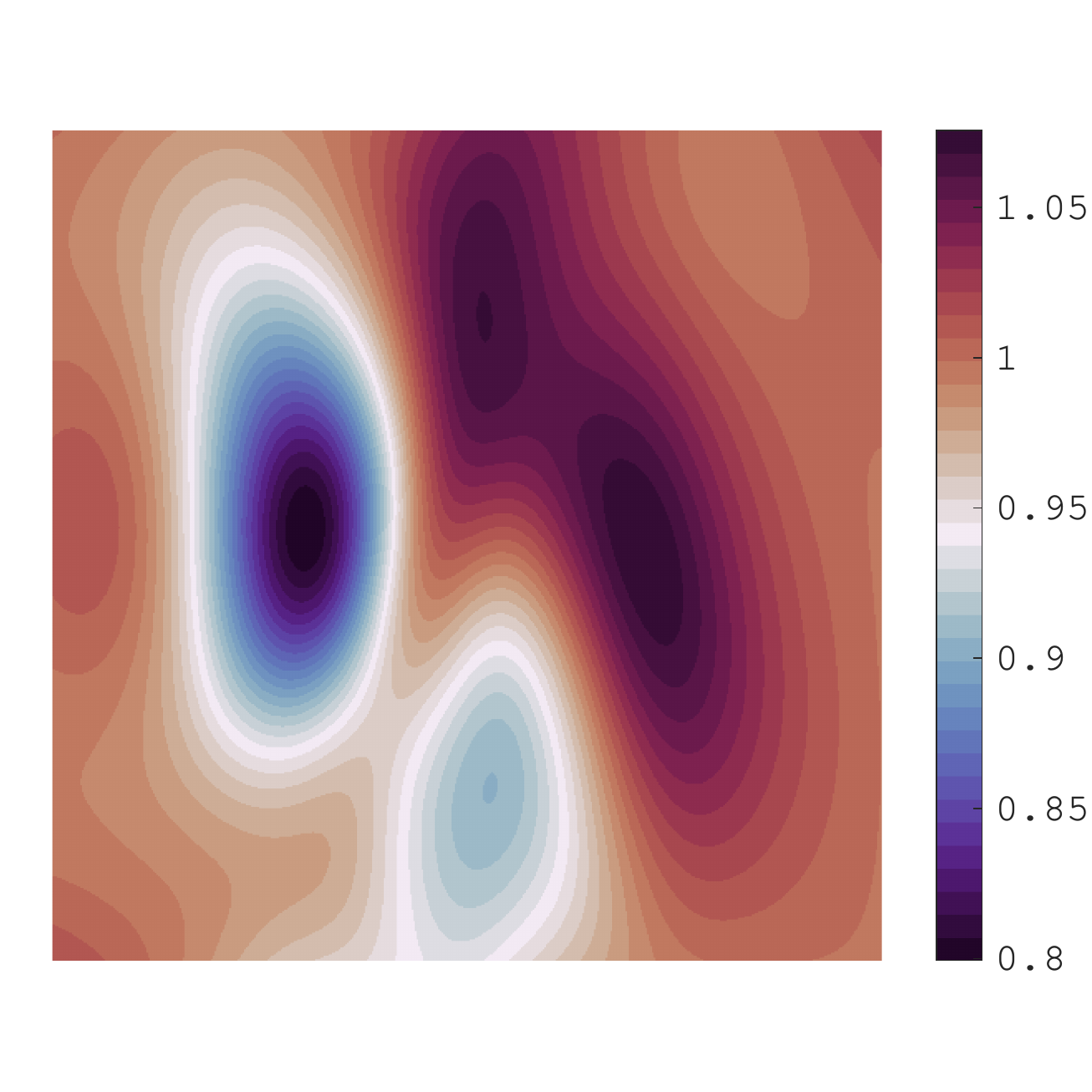}} 
\caption{The ground truth $a^{\dagger}$, noisy measurements $a^{\delta}$, and reconstruction $\hat{a}$ by our method in \cref{example:fourmode}. $\delta=1\%$ (top) $\delta=10\%$ (middle) $\delta=20\%$ (bottom).}
\label{fig:example1:a}
\end{figure}
\begin{figure}
\centering
\subfloat[$\gamma^{\dagger}$]
{\includegraphics[width=0.30\linewidth]{./figures/example01gamma\_exact.pdf}}
\subfloat[$\hat{\gamma}$]
{\includegraphics[width=0.30\linewidth]{./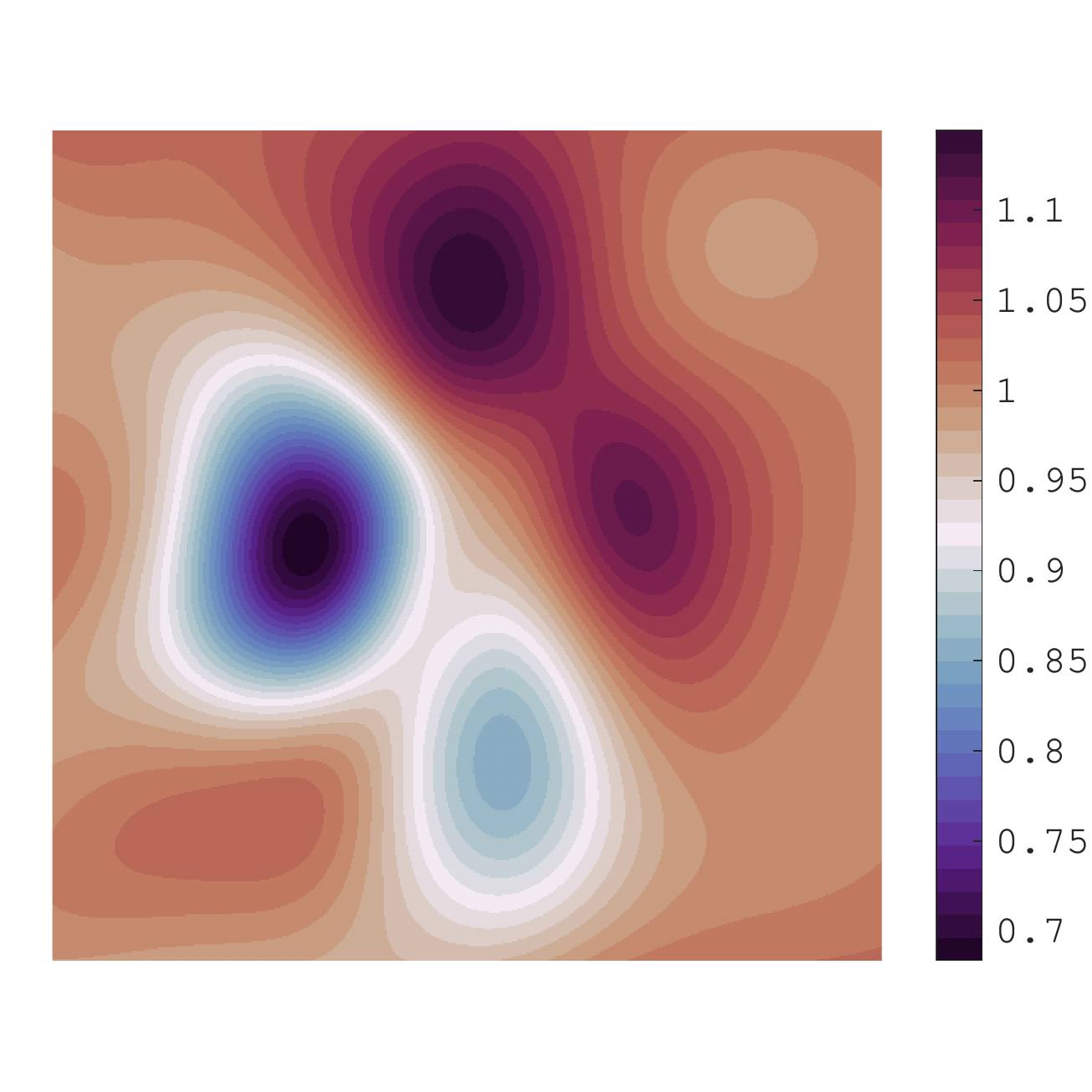}} 
\subfloat[$|\hat{\gamma}-\gamma^{\dagger}|$]
{\includegraphics[width=0.30\linewidth]{./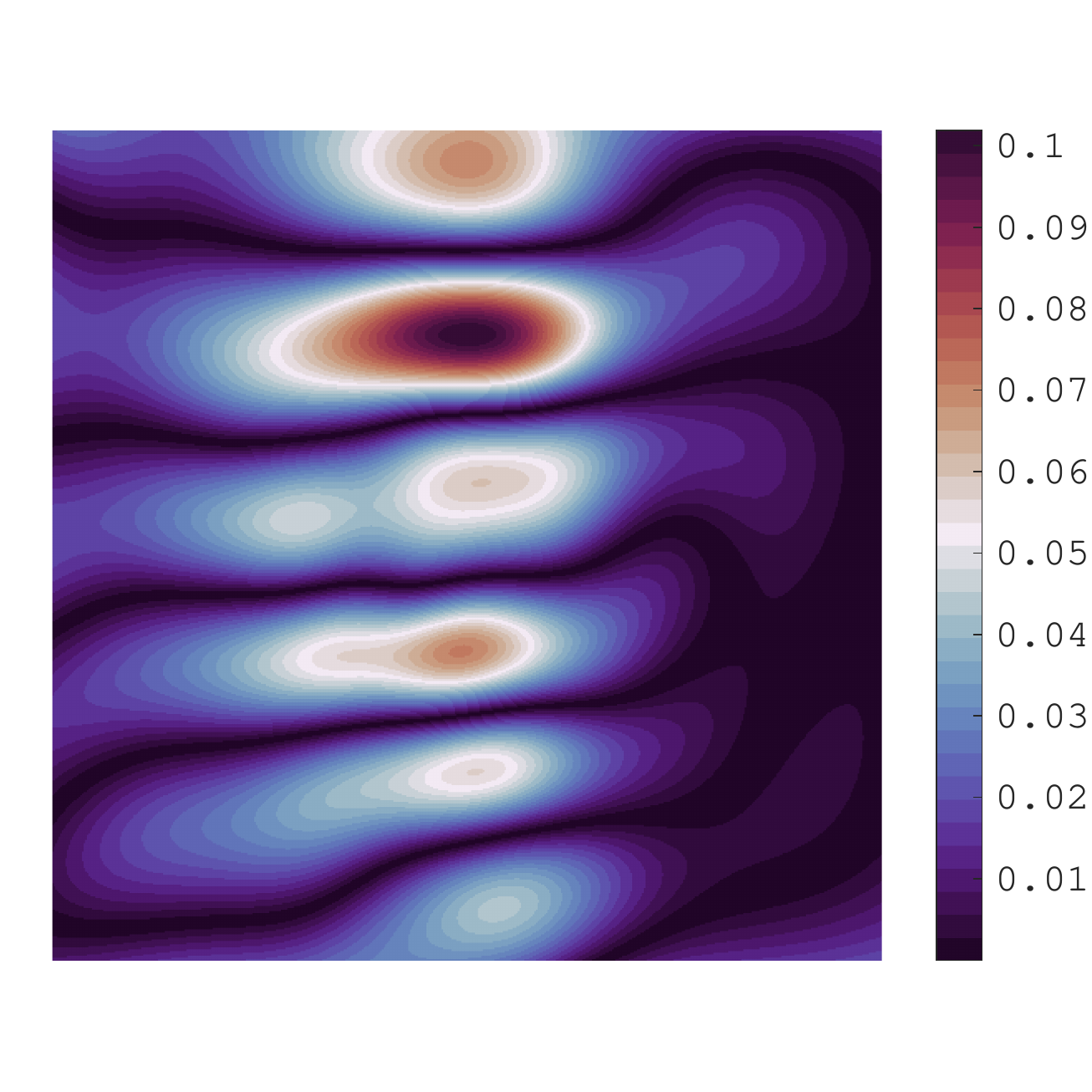}} 
\\
\subfloat[$\gamma^{\dagger}$]
{\includegraphics[width=0.30\linewidth]{./figures/example01gamma\_exact.pdf}}
\subfloat[$\hat{\gamma}$]
{\includegraphics[width=0.30\linewidth]{./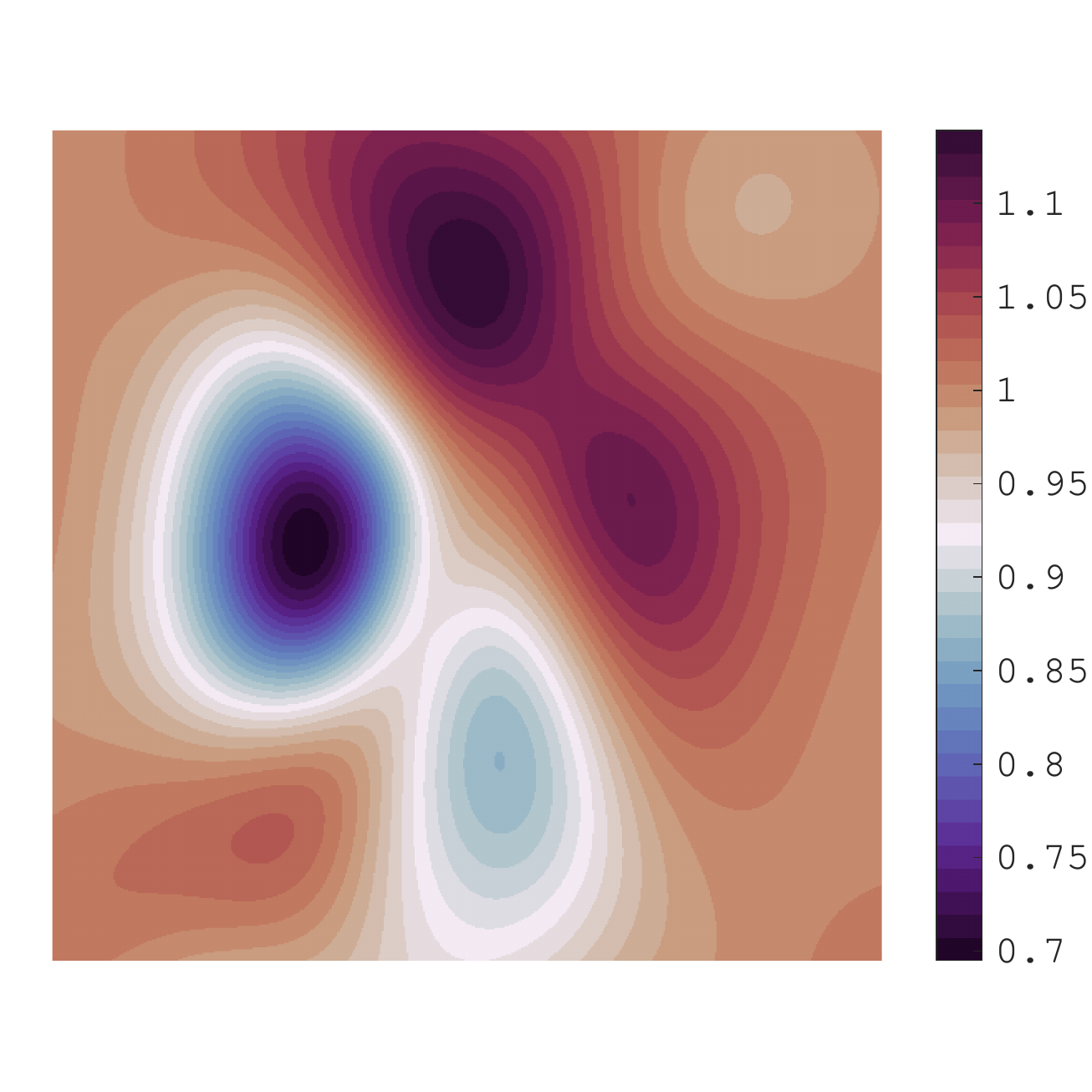}} 
\subfloat[$|\hat{\gamma}-\gamma^{\dagger}|$]
{\includegraphics[width=0.30\linewidth]{./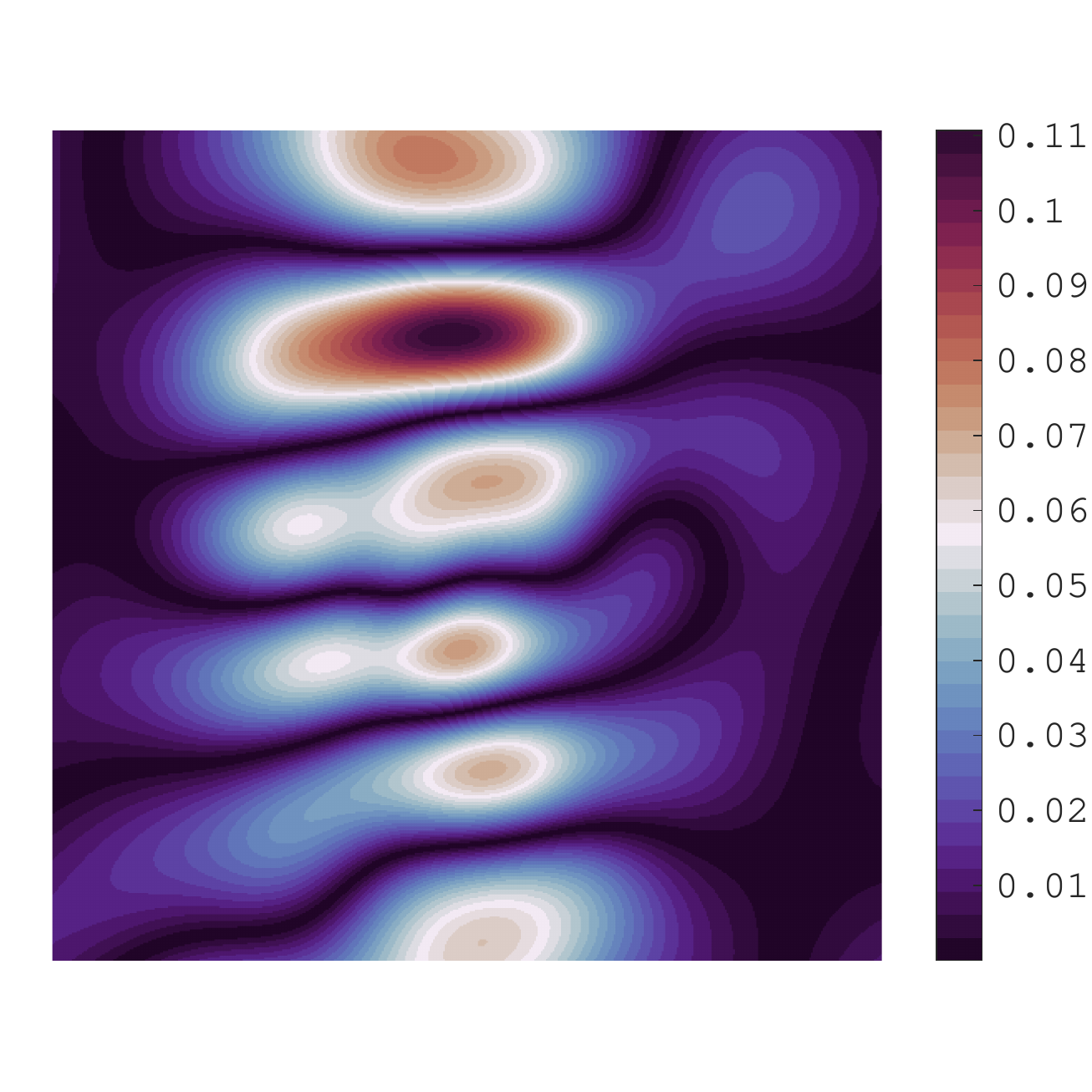}} 
\\
\subfloat[$\gamma^{\dagger}$]
{\includegraphics[width=0.30\linewidth]{./figures/example01gamma\_exact.pdf}}
\subfloat[$\hat{\gamma}$]
{\includegraphics[width=0.30\linewidth]{./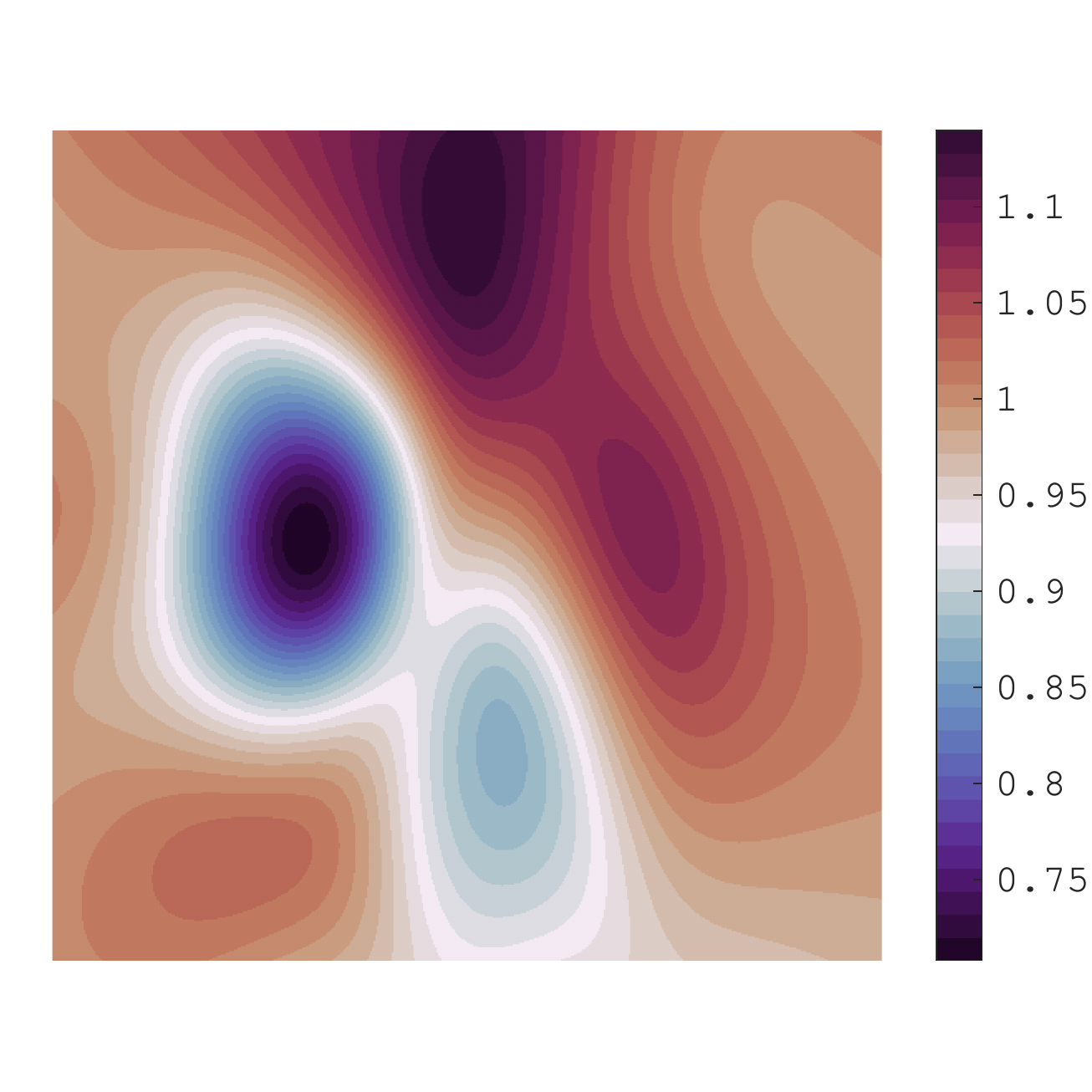}} 
\subfloat[$|\hat{\gamma}-\gamma^{\dagger}|$]
{\includegraphics[width=0.30\linewidth]{./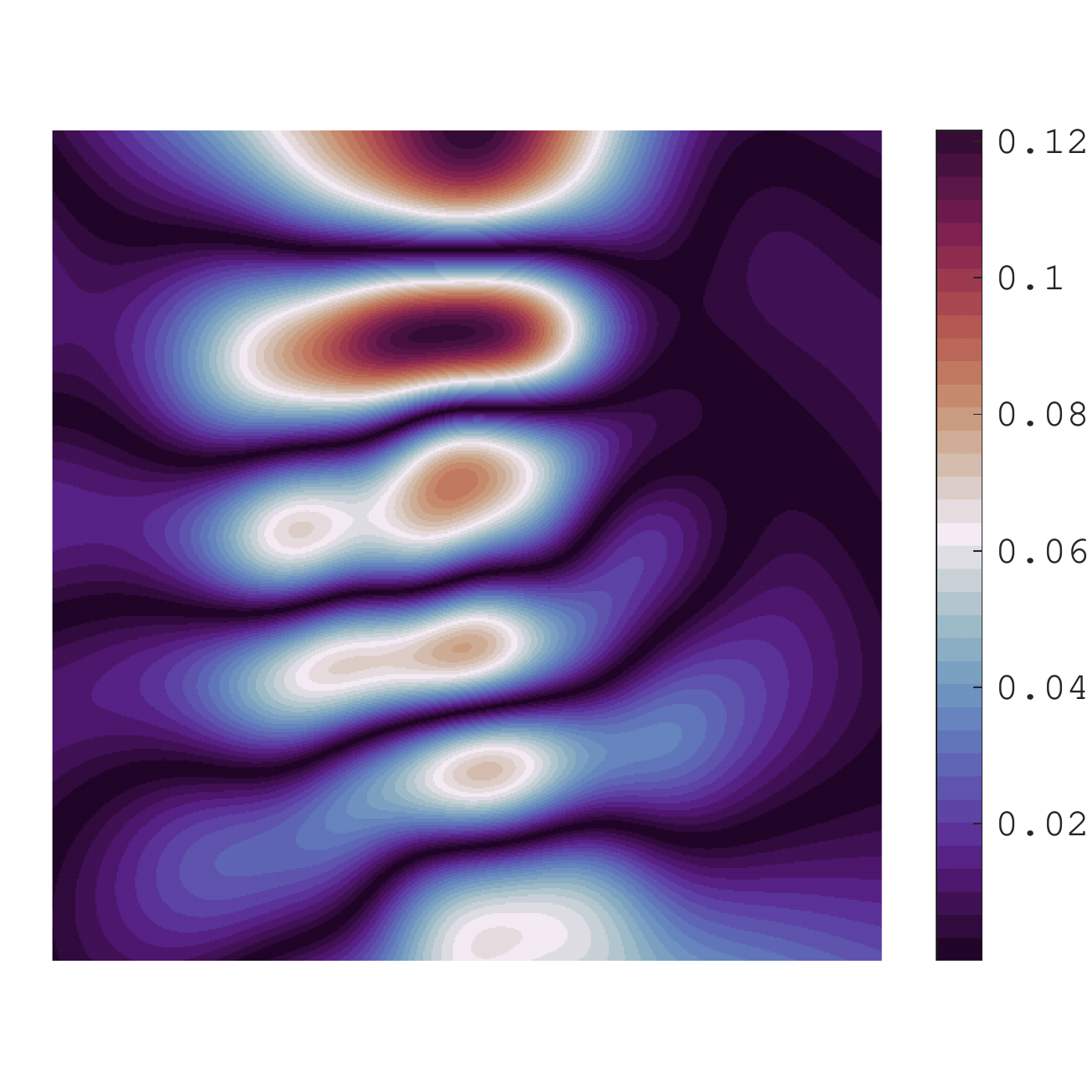}} 
\caption{The ground truth conductivity $\gamma^{\dagger}$, reconstruction $\hat{\gamma}$ by our method, and the point-wise absolute error $|\hat{\gamma}-\gamma^{\dagger}|$ in \cref{example:fourmode}. $\delta=1\%$ (top) $\delta=10\%$ (middle) $\delta=20\%$ (bottom).}
\label{fig:example1:gamma}
\end{figure}
\begin{figure}
\centering
\subfloat[$u^{\dagger}$]
{\includegraphics[width=0.30\linewidth]{./figures/example01solution\_exact.pdf}}
\subfloat[$\hat{u}$]
{\includegraphics[width=0.30\linewidth]{./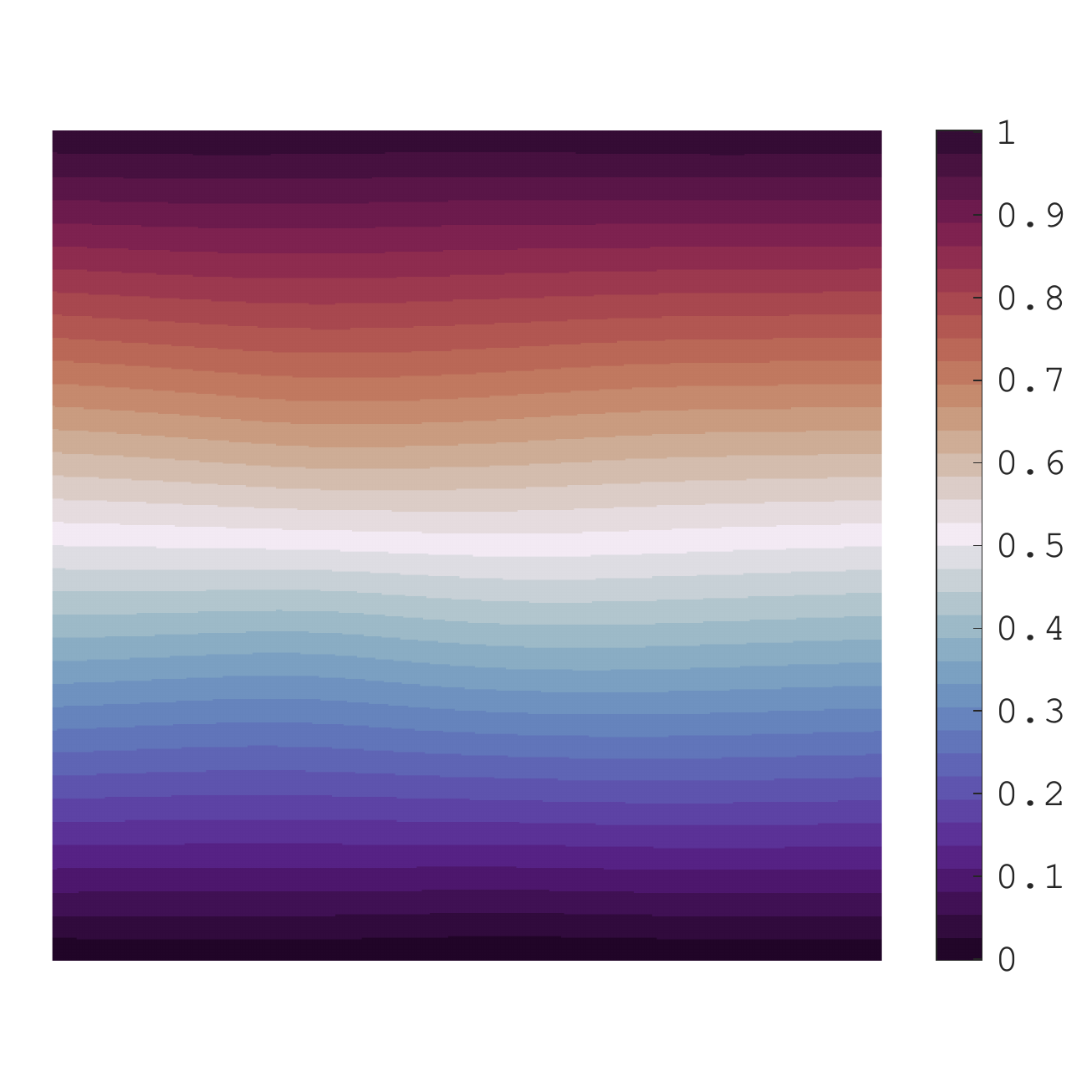}} 
\subfloat[$|\hat{u}-u^{\dagger}|$]
{\includegraphics[width=0.30\linewidth]{./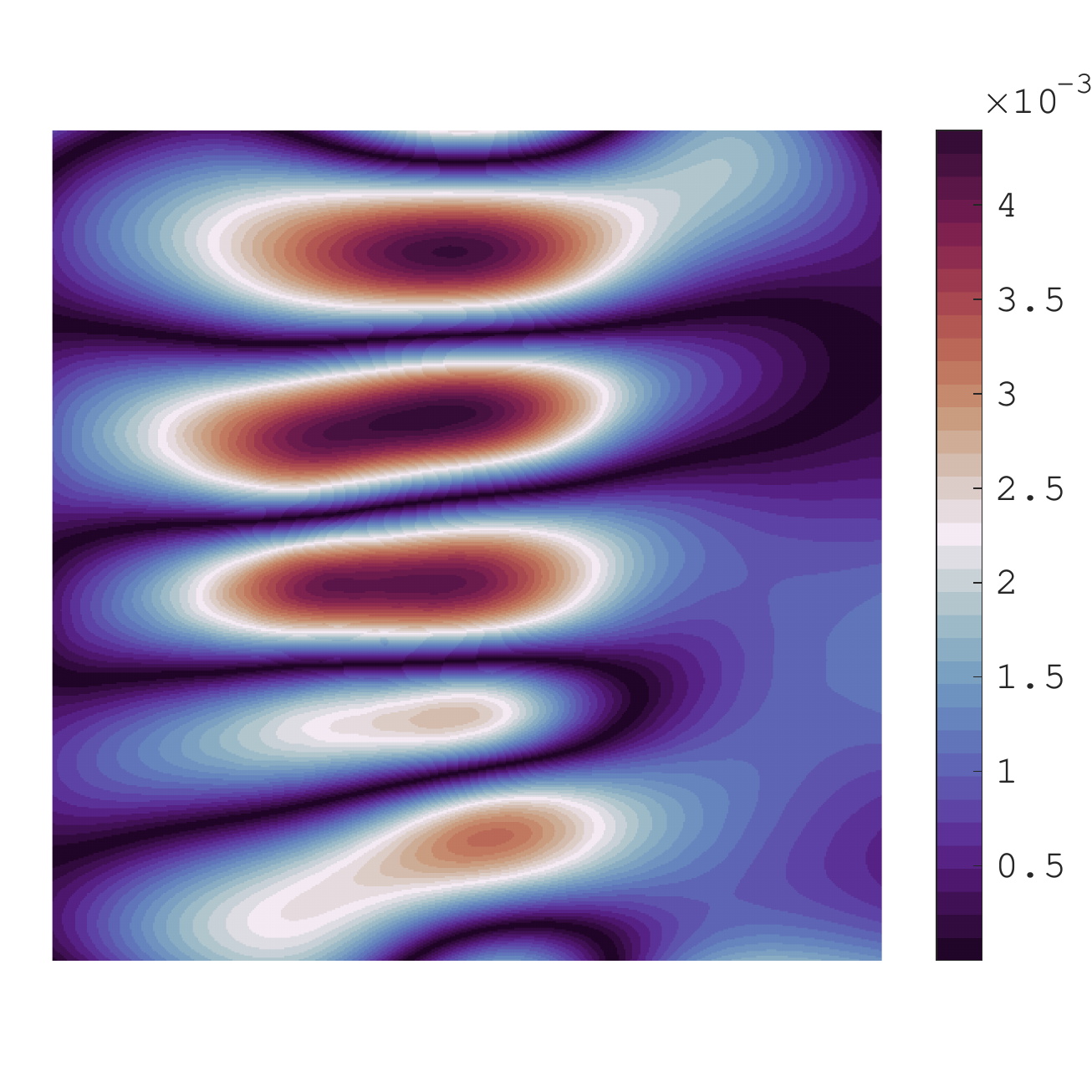}} 
\\
\subfloat[$u^{\dagger}$]
{\includegraphics[width=0.30\linewidth]{./figures/example01solution\_exact.pdf}}
\subfloat[$\hat{u}$]
{\includegraphics[width=0.30\linewidth]{./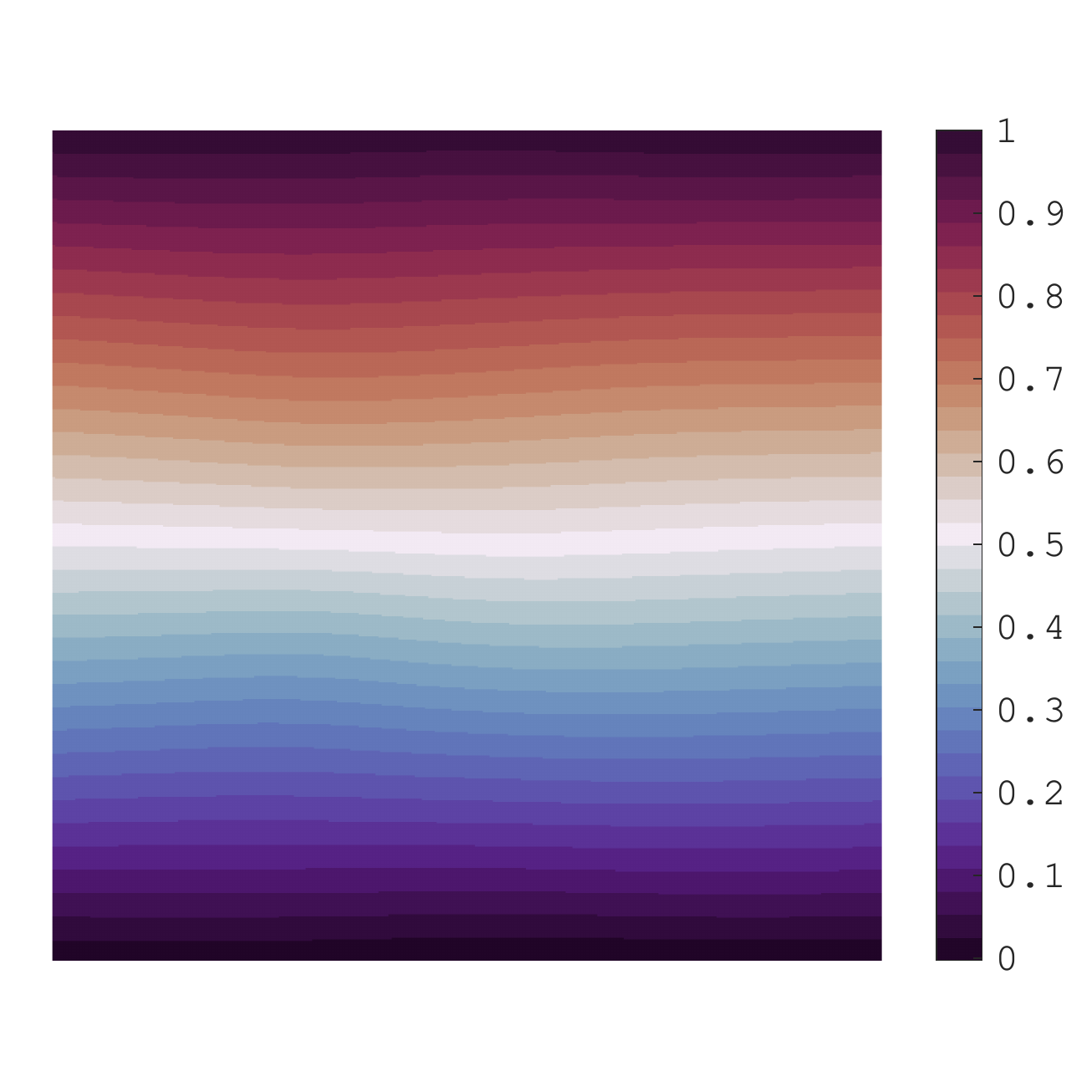}} 
\subfloat[$|\hat{u}-u^{\dagger}|$]
{\includegraphics[width=0.30\linewidth]{./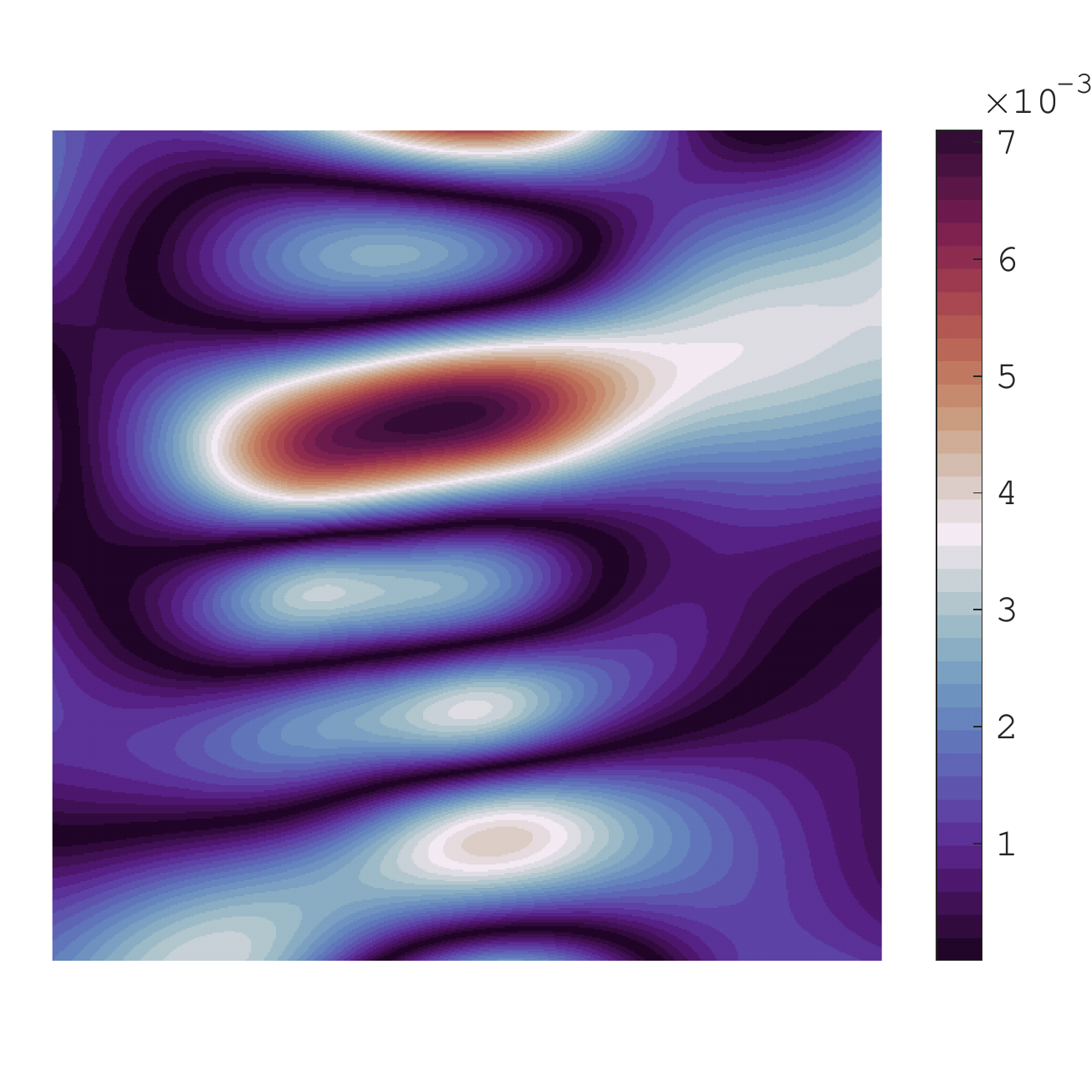}} 
\\
\subfloat[$u^{\dagger}$]
{\includegraphics[width=0.30\linewidth]{./figures/example01solution\_exact.pdf}}
\subfloat[$\hat{u}$]
{\includegraphics[width=0.30\linewidth]{./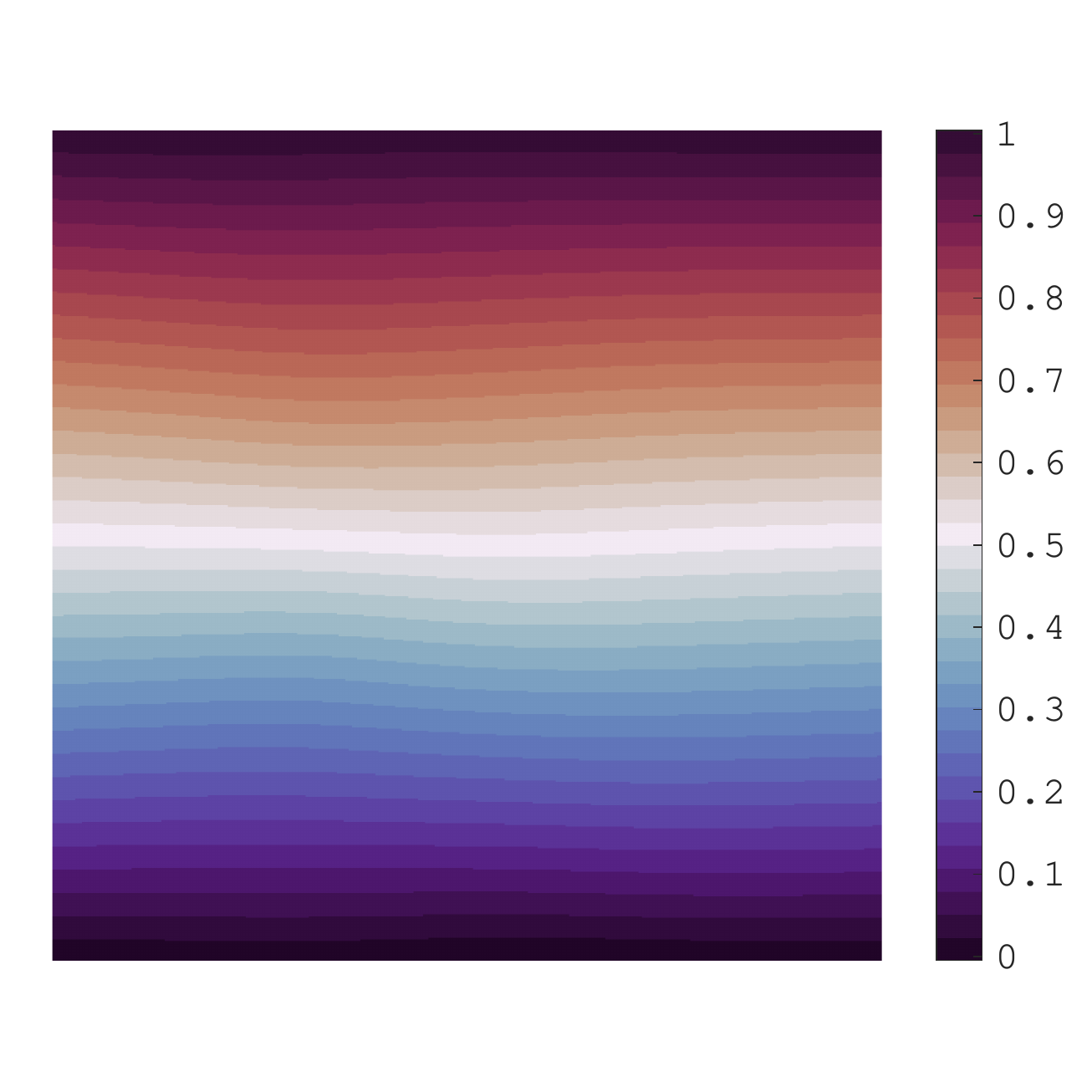}} 
\subfloat[$|\hat{u}-u^{\dagger}|$]
{\includegraphics[width=0.30\linewidth]{./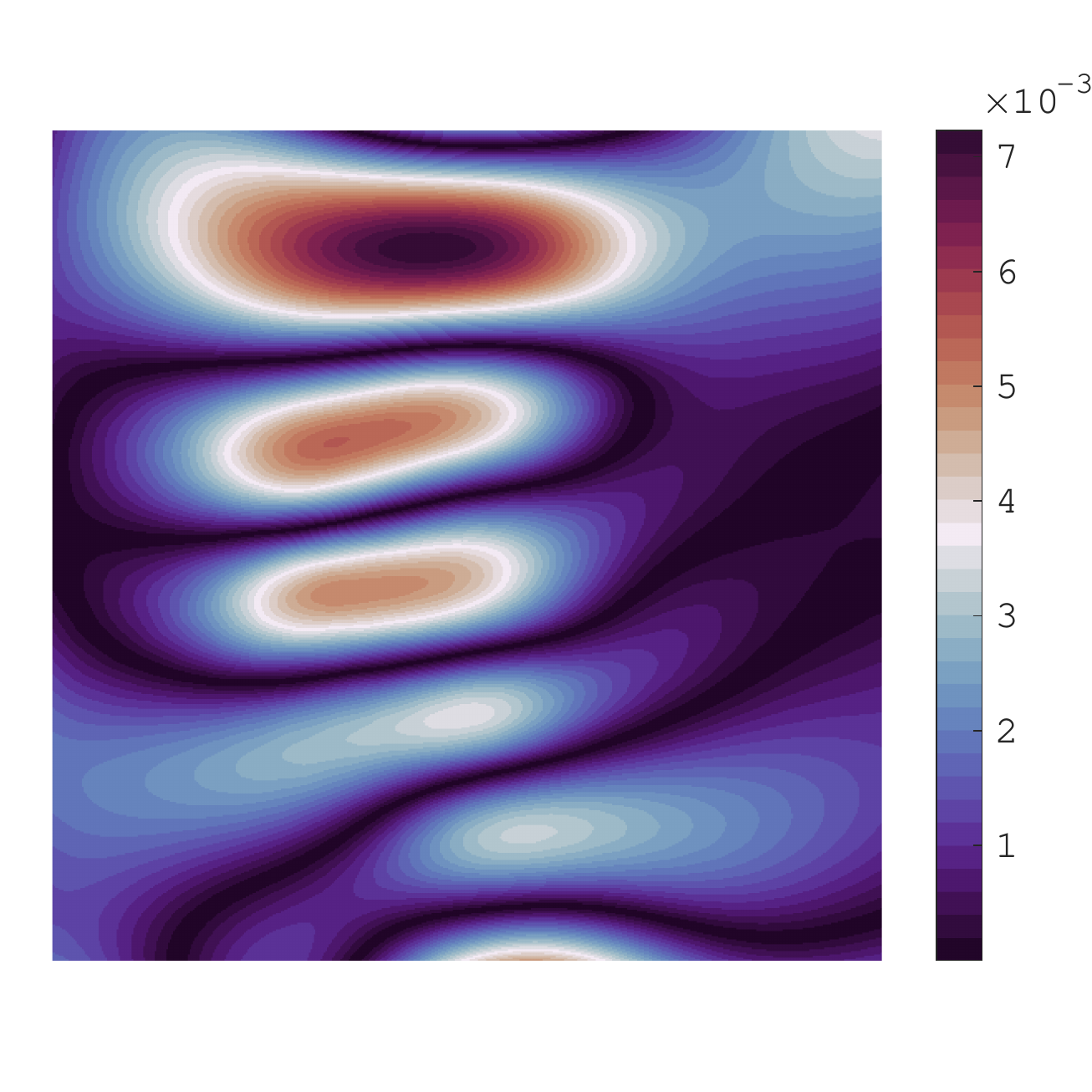}} 
\caption{The ground truth voltage $u^{\dagger}$, reconstruction $\hat{u}$ by our method, and the point-wise absolute error $|\hat{u}-u^{\dagger}|$ in \cref{example:fourmode}. $\delta=1\%$ (top) $\delta=10\%$ (middle) $\delta=20\%$ (bottom).}
\label{fig:example1:u}
\end{figure}

\begin{example}[CDII with discontinuous conductivity]\label{example:dc}
Let $\rho(x,y)=\|(x,y)-(\frac{1}{2},\frac{1}{2})\|_{2}$.
\begin{equation*}
\gamma^{\dagger}(x,y)=1+\chi_{\{x>\frac{1}{2}\}}\exp(-2\rho^{2}).
\end{equation*}
\end{example}
\par We apply our approach to reconstruct the conductivity and voltage from the measurement data with 1\%, 10\%, and 20\% noise, respectively. The relative $L^{2}$-error of the recovered voltage $\hat{u}$ and conductivity $\hat{\gamma}$ are shown in \cref{tab:example2:error}. Our results demonstrate that the proposed method can effectively reconstruct both the conductivity and voltage from data sets with noise levels of up to 20\%, while maintaining high accuracy. At the same time, the accuracy does not decrease significantly as the noise level increases. We also compare the ground truth measurement data, noisy data, and recovered data in \cref{fig:example2:a}. As shown in \cref{fig:example2:gamma,fig:example2:u}, the error in the reconstructed conductivity and voltage is primarily concentrated at the discontinuous interface, which aligns with the findings of \cite{Jin2022imaging}. Additionally, our method has no significant advantage over that in \cite{Jin2022imaging} on this example, mainly because our method is based on a strong form of the PDE.
\begin{table}
\caption{The relative $L^{2}$-error of the recovered conductivity $\hat{\gamma}$ and voltage $\hat{u}$ in \cref{example:dc}.}
\centering
\begin{tabular}{lccc}
\toprule
& \multicolumn{3}{c}{$\delta$}   \\
\cmidrule(lr){2-4}
 & $1\%$ & $10\%$ & $20\%$  \\
\cmidrule{1-4}
$\err(\gamma)$ & $3.71 \times 10^{-2}$ & $4.38 \times 10^{-2}$ & $4.81 \times 10^{-2}$  \\
$\err(u)$ & $8.48 \times 10^{-3}$ & $9.08 \times 10^{-3}$ & $9.67 \times 10^{-3}$  \\
\bottomrule
\end{tabular}
\label{tab:example2:error}
\end{table}
\begin{figure}
\centering
\subfloat[$a^{\dagger}$]
{\includegraphics[width=0.30\linewidth]{./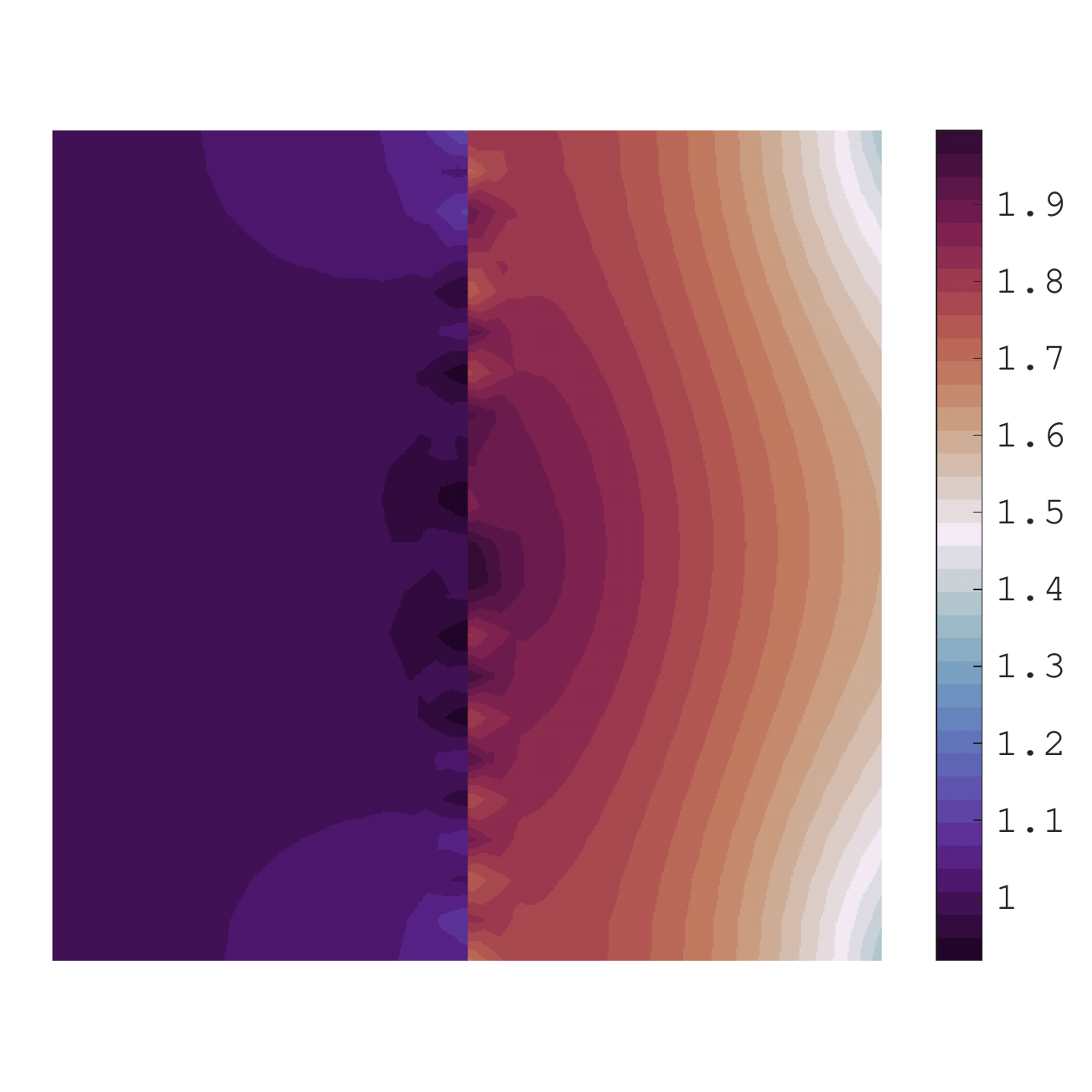}}
\subfloat[$a^{\delta}$]
{\includegraphics[width=0.30\linewidth]{./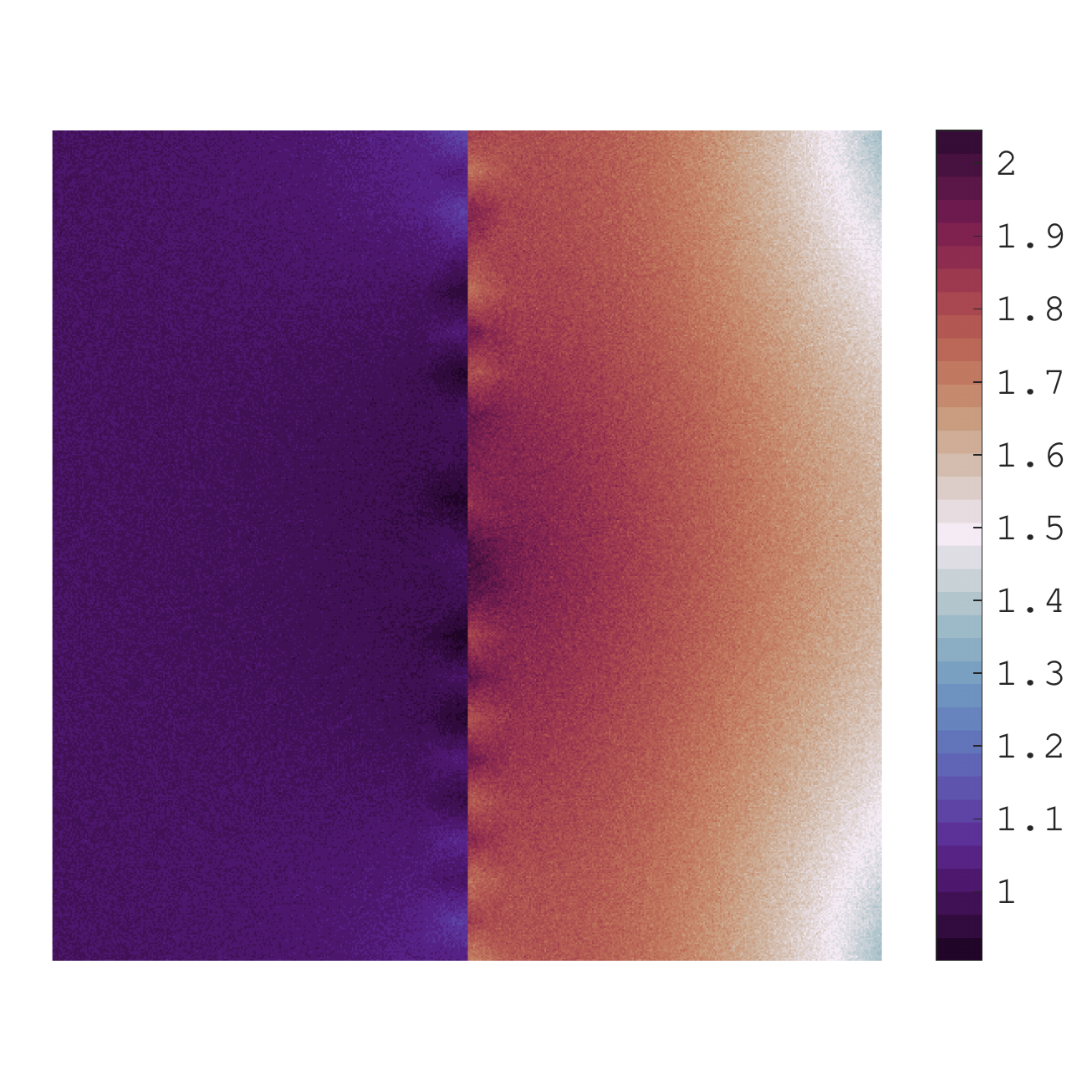}} 
\subfloat[$\hat{a}$]
{\includegraphics[width=0.30\linewidth]{./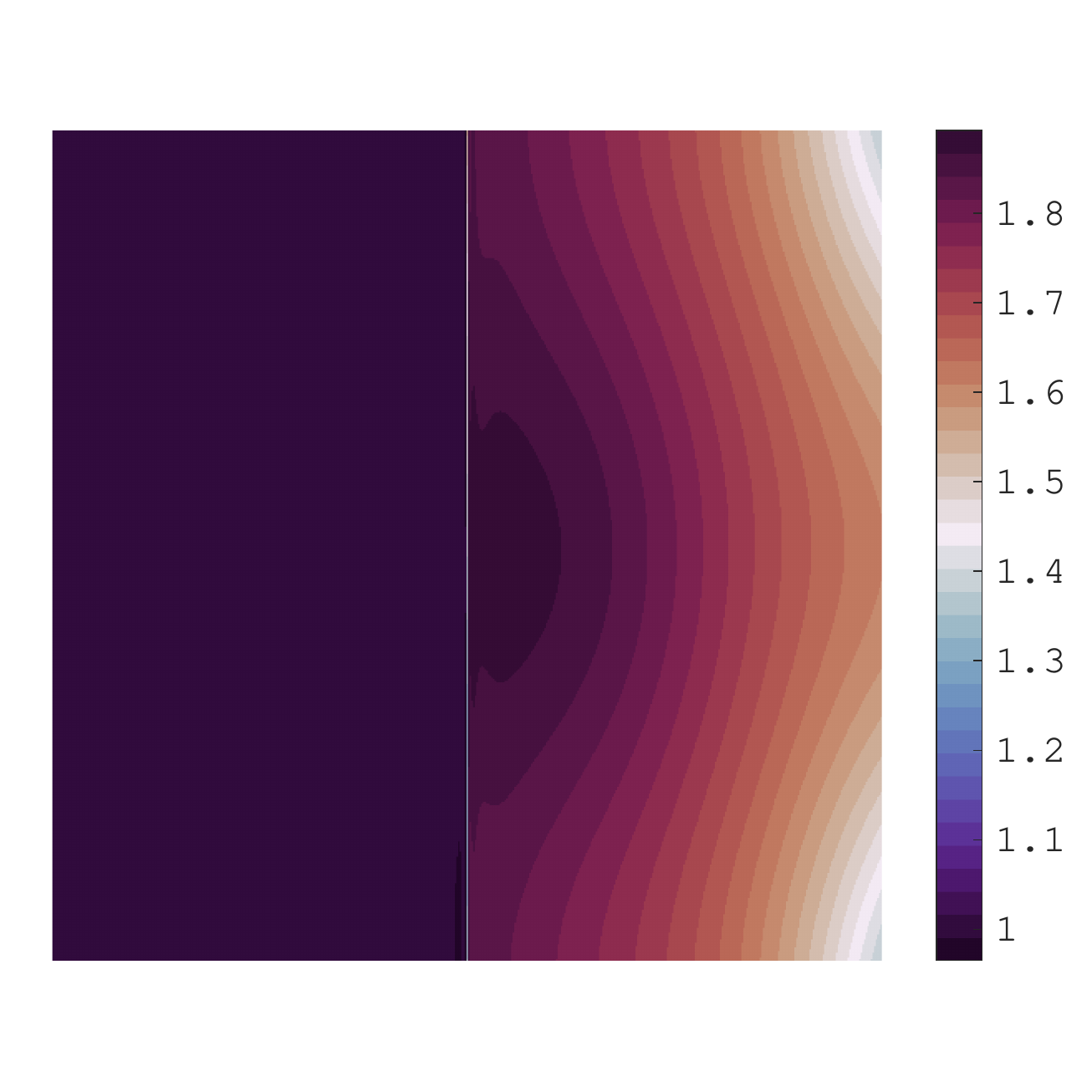}} 
\\
\subfloat[$a^{\dagger}$]
{\includegraphics[width=0.30\linewidth]{./figures/example02observations.pdf}}
\subfloat[$a^{\delta}$]
{\includegraphics[width=0.30\linewidth]{./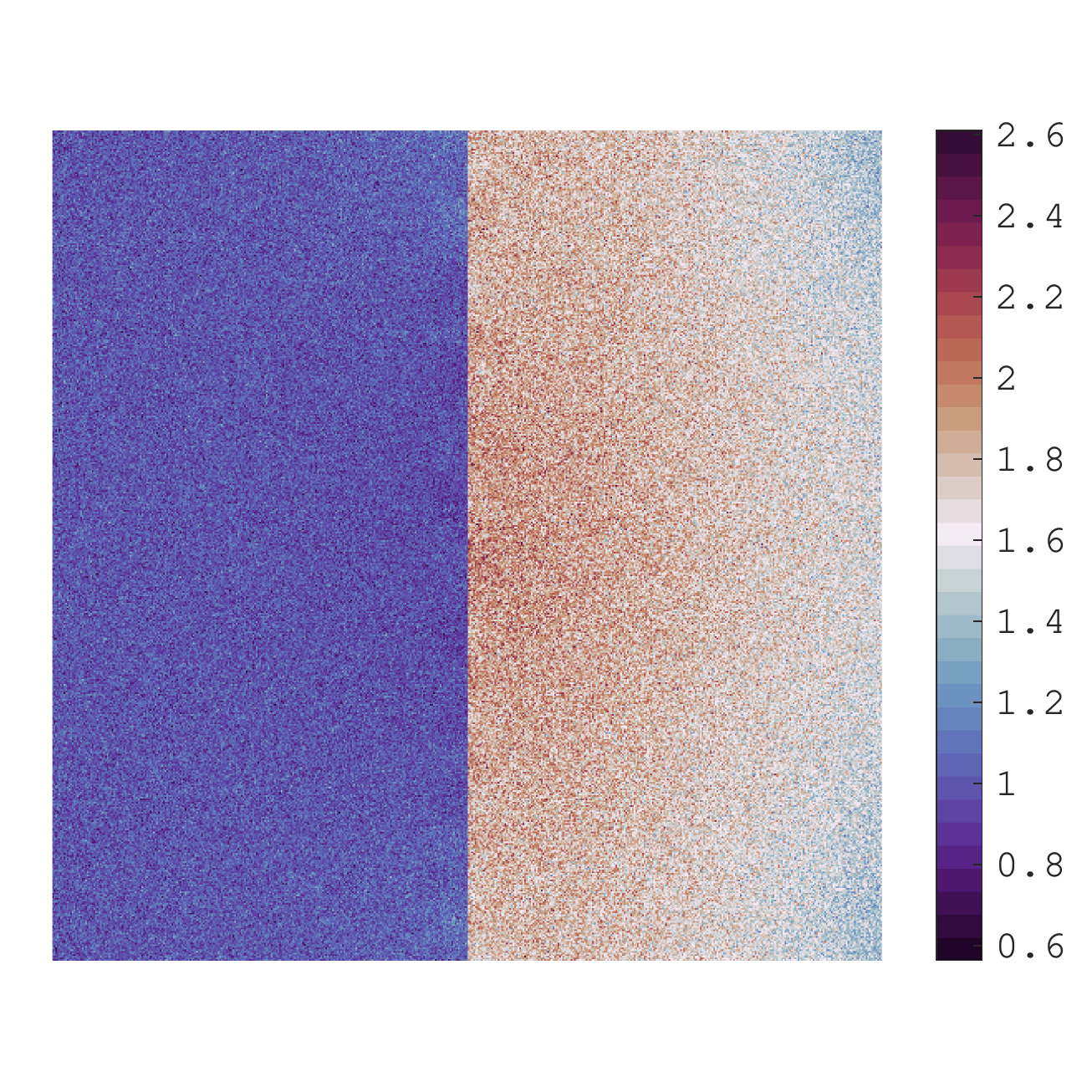}} 
\subfloat[$\hat{a}$]
{\includegraphics[width=0.30\linewidth]{./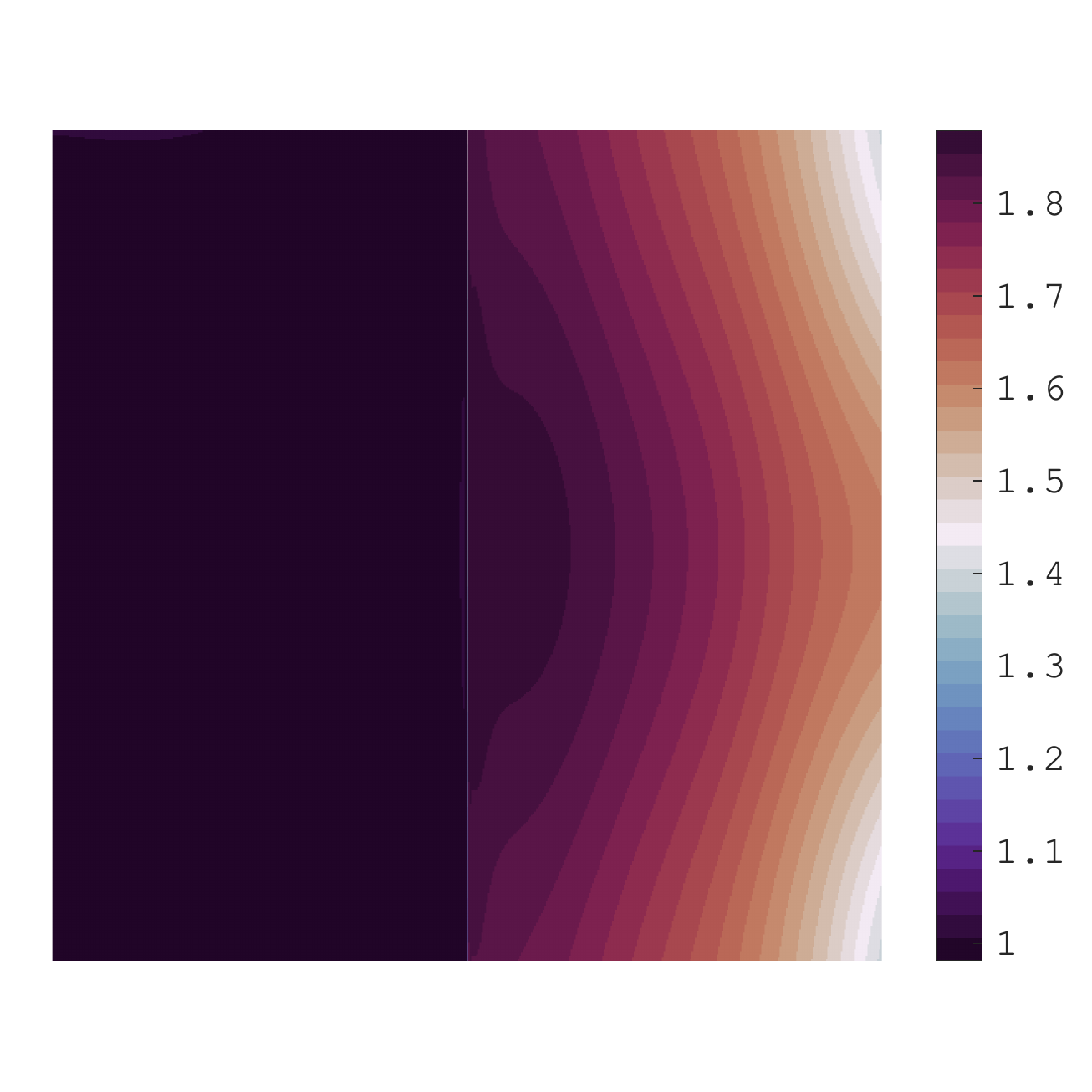}} 
\\
\subfloat[$a^{\dagger}$]
{\includegraphics[width=0.30\linewidth]{./figures/example02observations.pdf}}
\subfloat[$a^{\delta}$]
{\includegraphics[width=0.30\linewidth]{./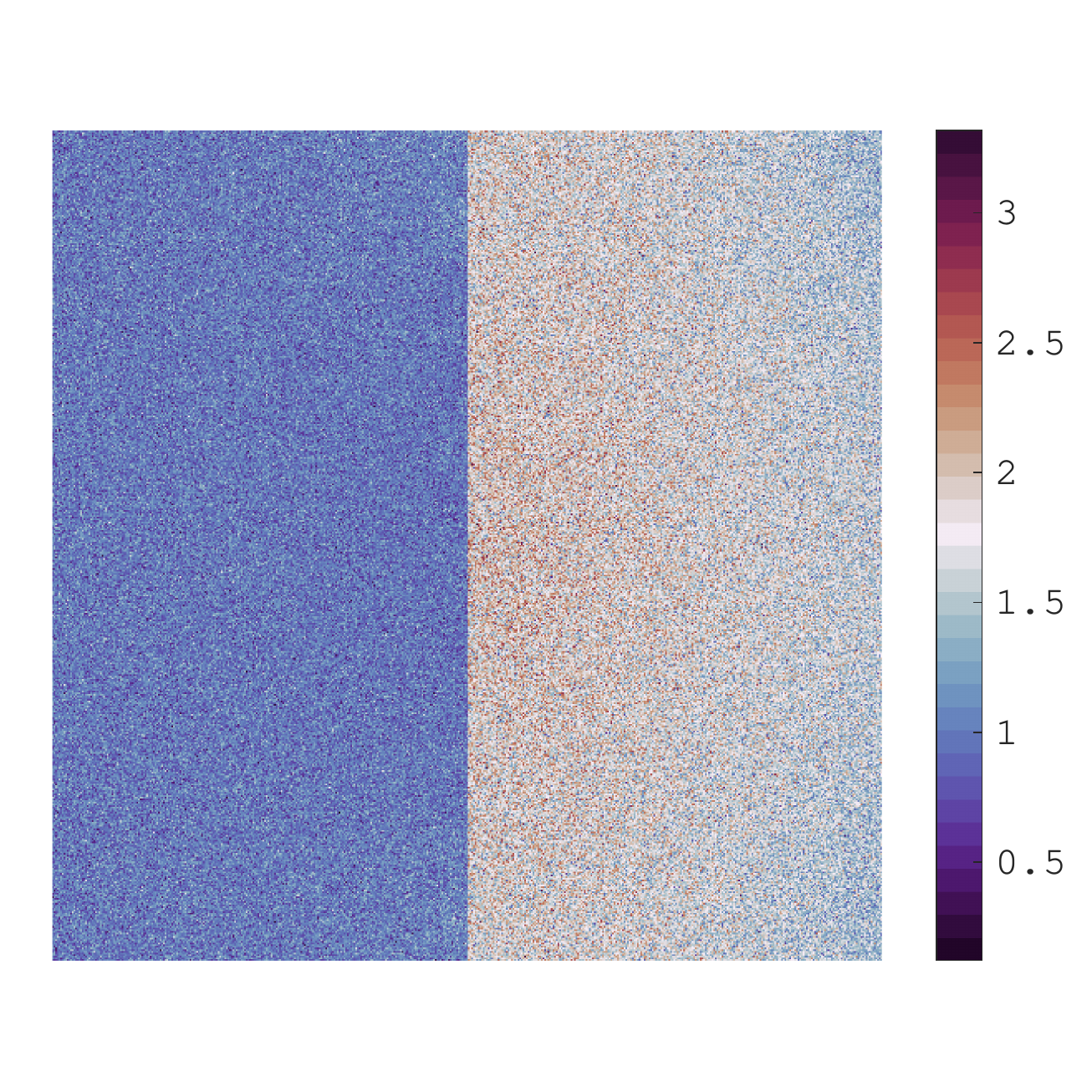}} 
\subfloat[$\hat{a}$]
{\includegraphics[width=0.30\linewidth]{./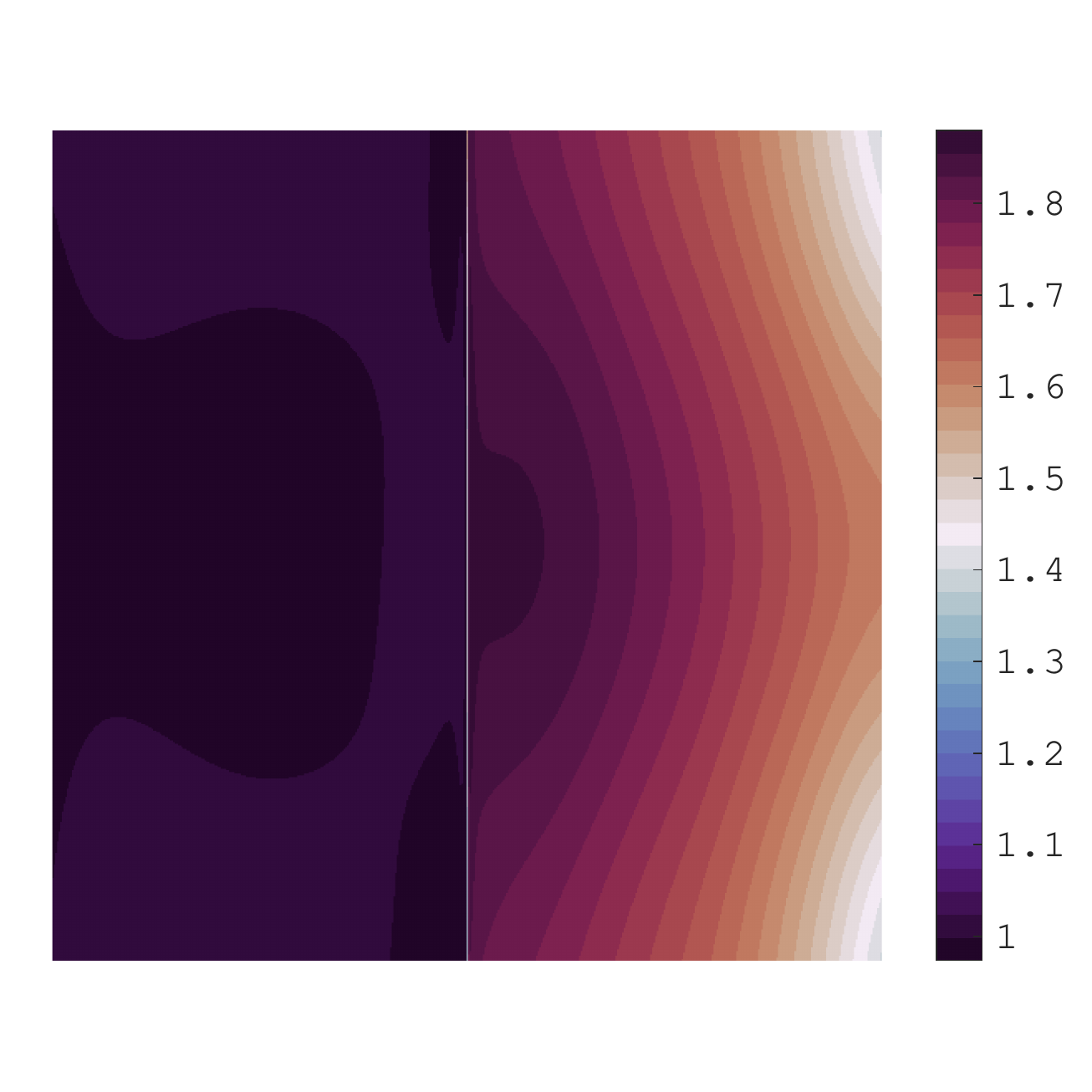}} 
\caption{The ground truth $a^{\dagger}$, noisy measurements $a^{\delta}$, and reconstruction $\hat{a}$ by our method in \cref{example:dc}. $\delta=1\%$ (top) $\delta=10\%$ (middle) $\delta=20\%$ (bottom)}
\label{fig:example2:a}
\end{figure}
\begin{figure}
\centering
\subfloat[$\gamma^{\dagger}$]
{\includegraphics[width=0.30\linewidth]{./figures/example02gamma\_exact.pdf}}
\subfloat[$\hat{\gamma}$]
{\includegraphics[width=0.30\linewidth]{./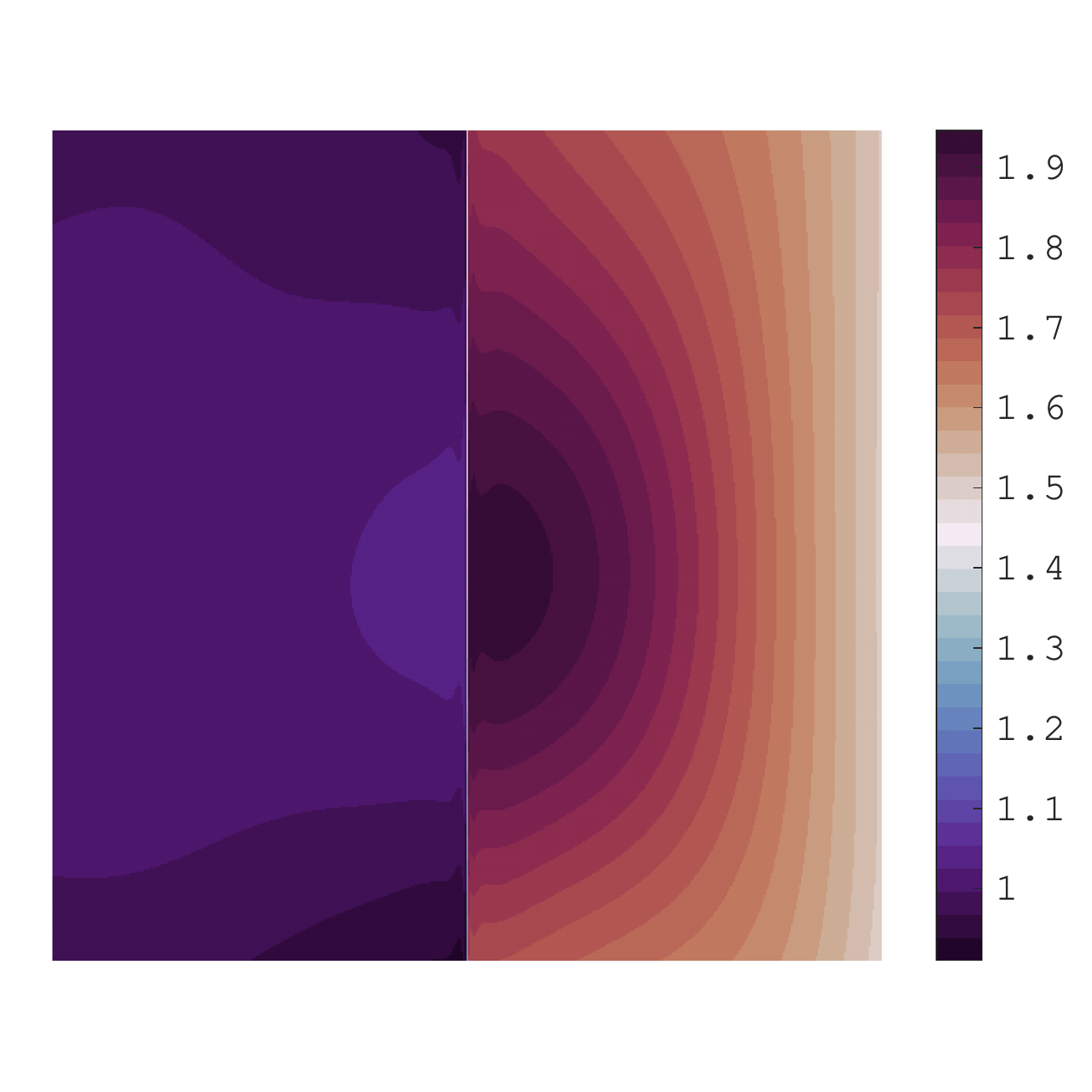}} 
\subfloat[$|\hat{\gamma}-\gamma^{\dagger}|$]
{\includegraphics[width=0.30\linewidth]{./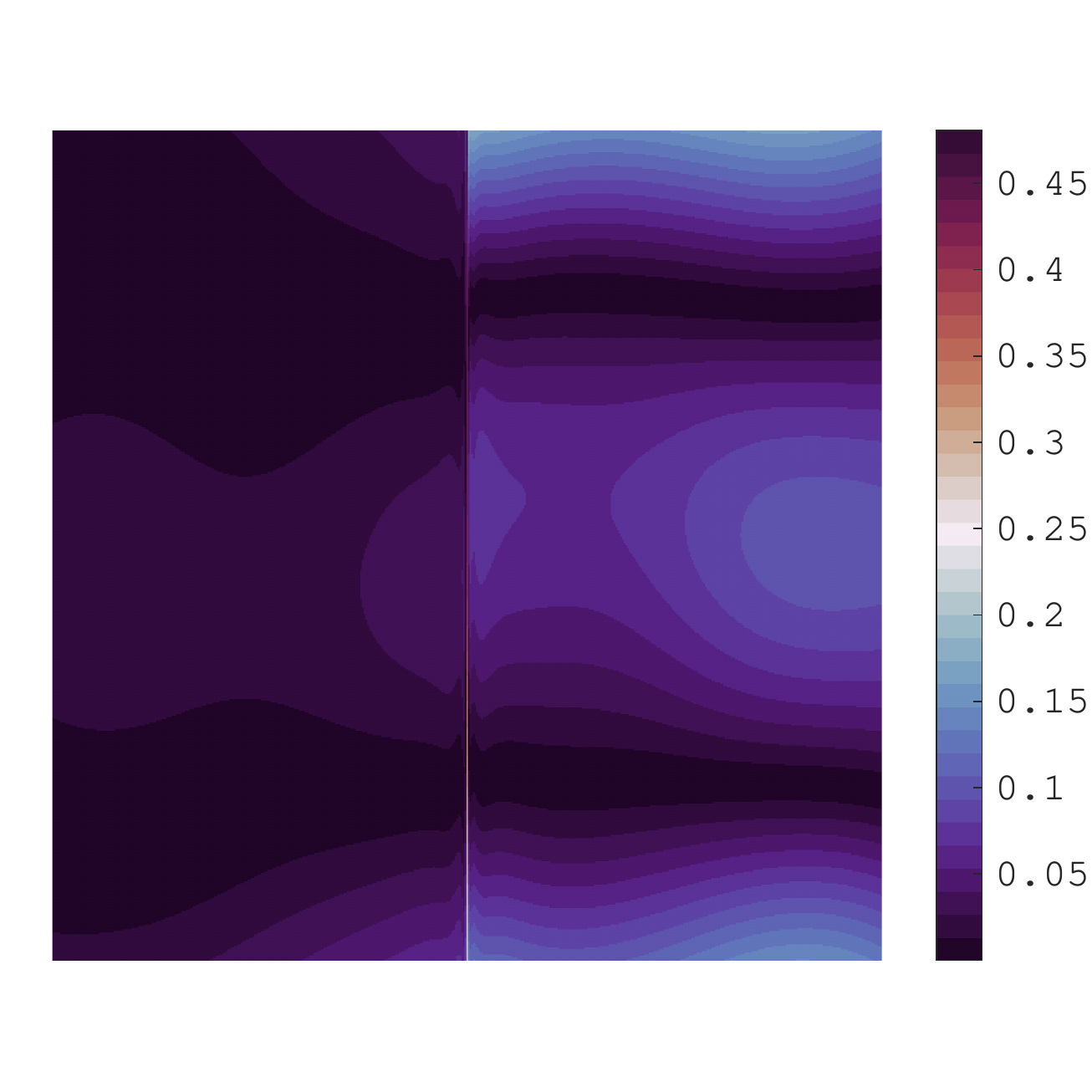}} 
\\
\subfloat[$\gamma^{\dagger}$]
{\includegraphics[width=0.30\linewidth]{./figures/example02gamma\_exact.pdf}}
\subfloat[$\hat{\gamma}$]
{\includegraphics[width=0.30\linewidth]{./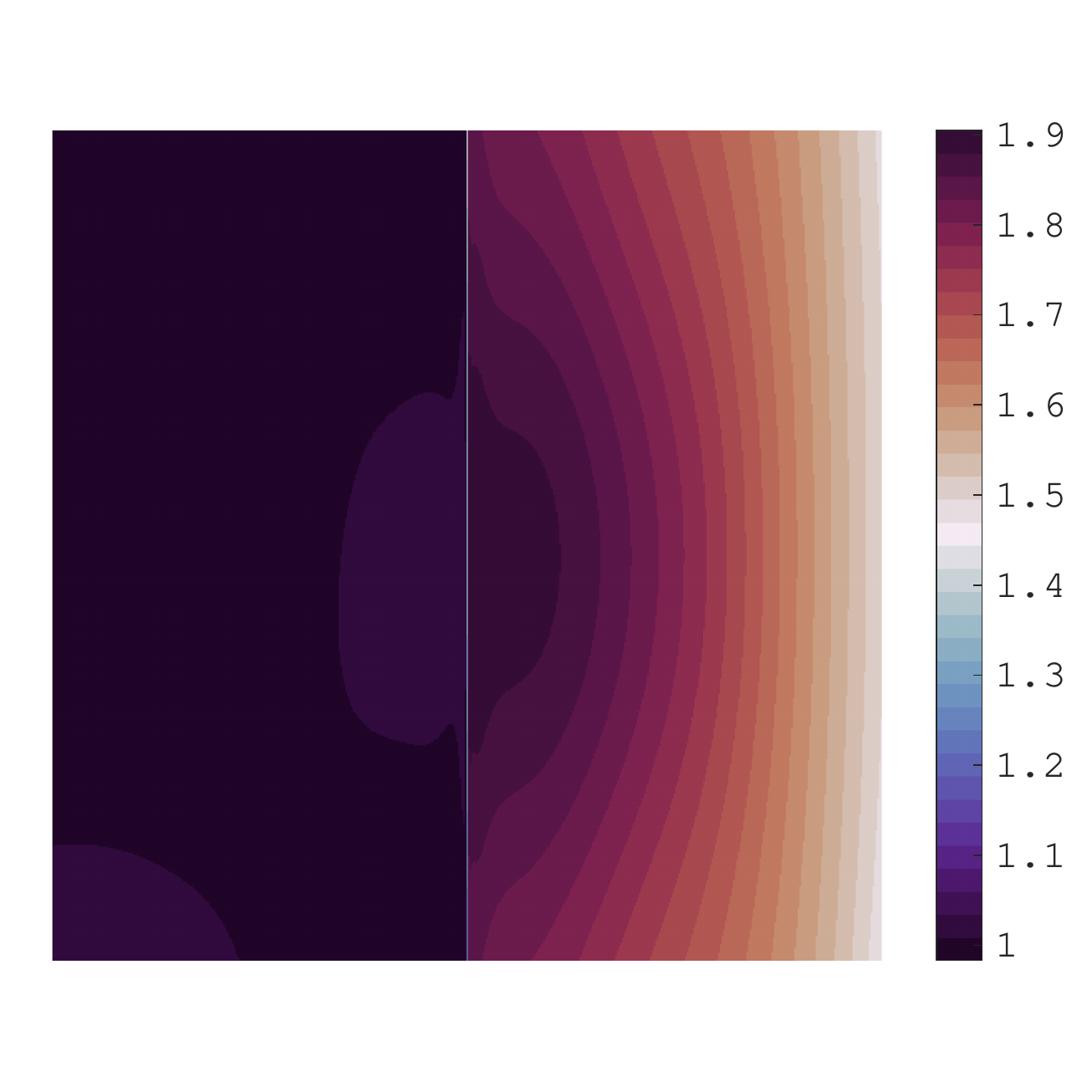}} 
\subfloat[$|\hat{\gamma}-\gamma^{\dagger}|$]
{\includegraphics[width=0.30\linewidth]{./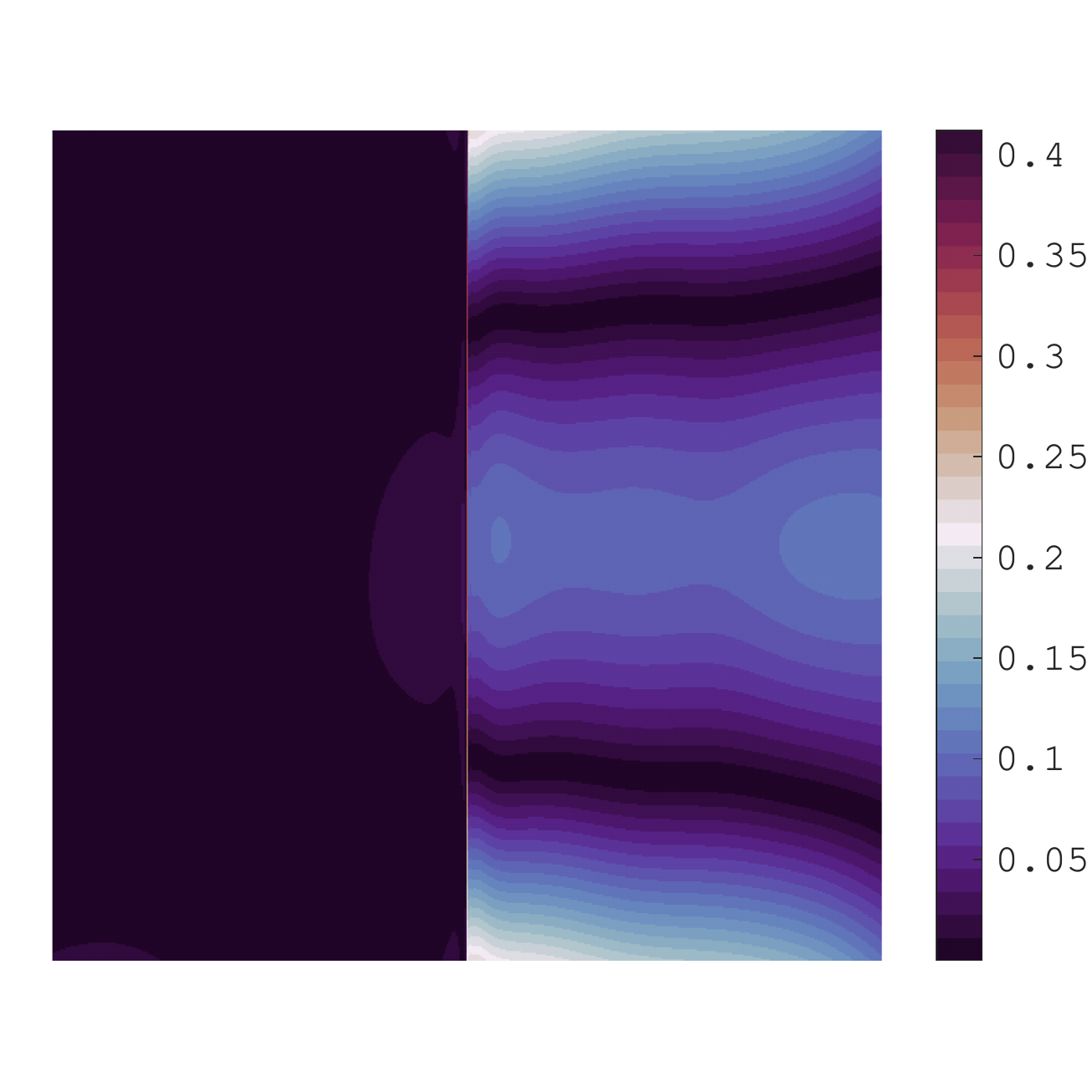}} 
\\
\subfloat[$\gamma^{\dagger}$]
{\includegraphics[width=0.30\linewidth]{./figures/example02gamma\_exact.pdf}}
\subfloat[$\hat{\gamma}$]
{\includegraphics[width=0.30\linewidth]{./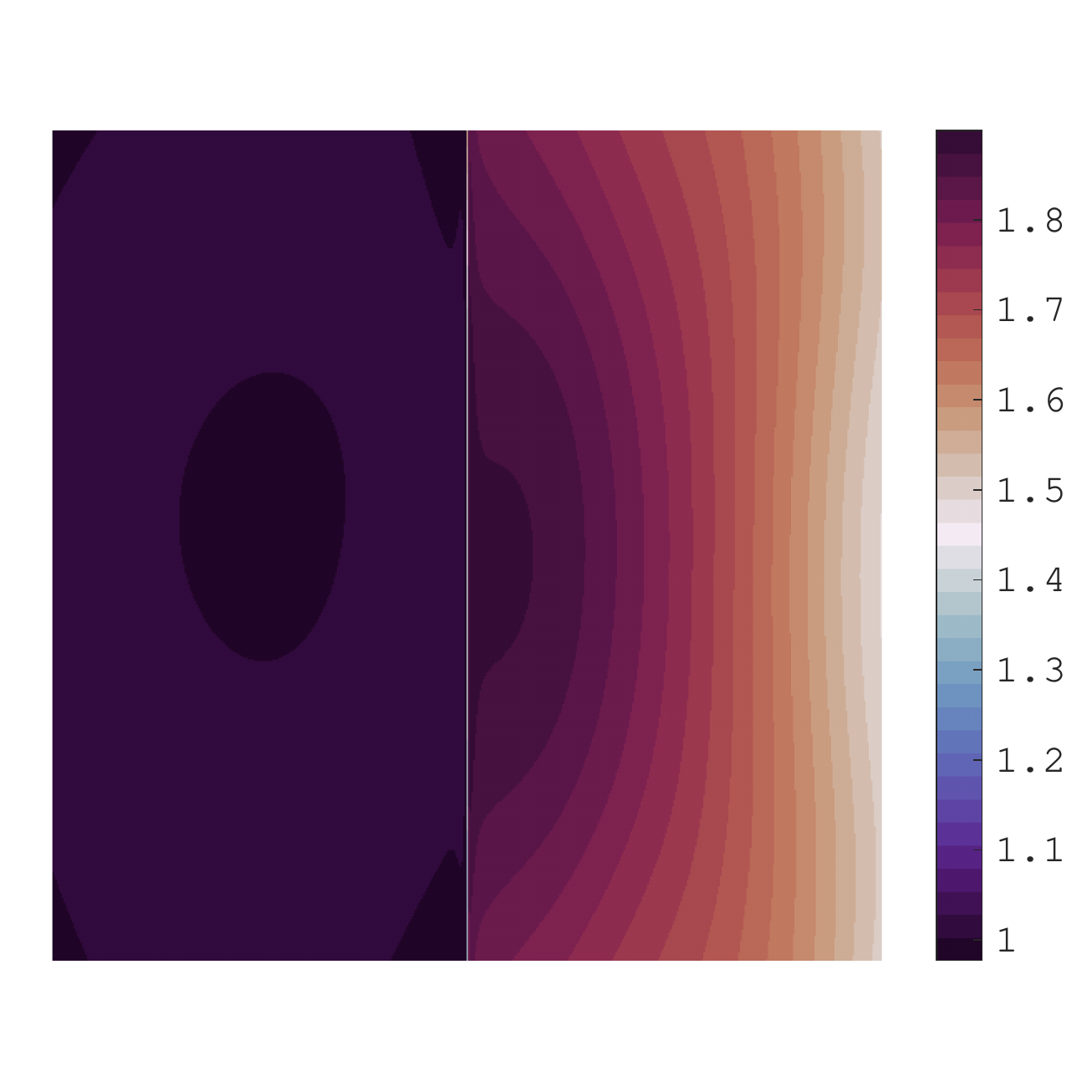}} 
\subfloat[$|\hat{\gamma}-\gamma^{\dagger}|$]
{\includegraphics[width=0.30\linewidth]{./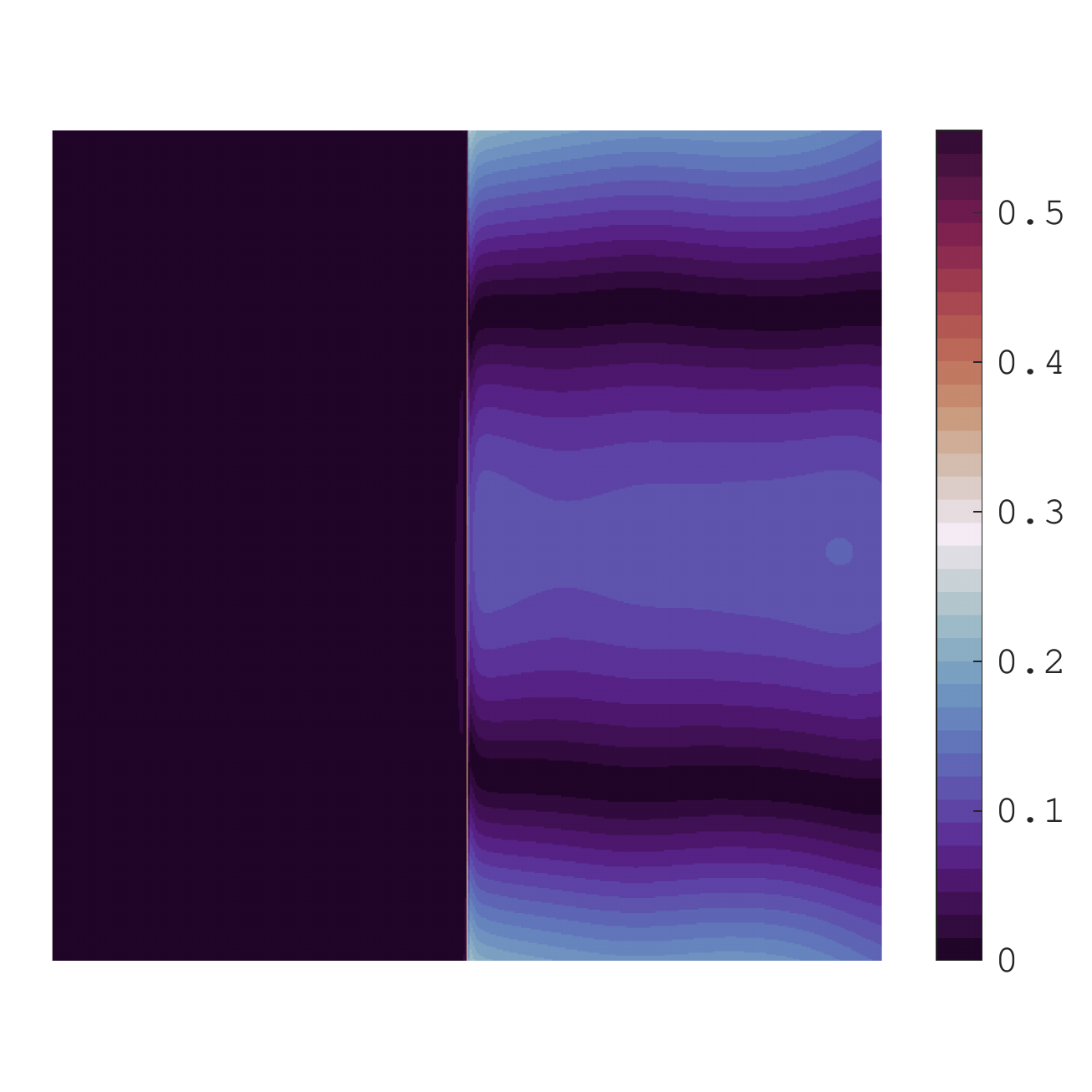}} 
\caption{The ground truth conductivity $\gamma^{\dagger}$, reconstruction $\hat{\gamma}$ by our method, and the point-wise absolute error $|\hat{\gamma}-\gamma^{\dagger}|$ in \cref{example:dc}. $\delta=1\%$ (top) $\delta=10\%$ (middle) $\delta=20\%$ (bottom).}
\label{fig:example2:gamma}
\end{figure}
\begin{figure}
\centering
\subfloat[$u^{\dagger}$]
{\includegraphics[width=0.30\linewidth]{./figures/example02solution\_exact.pdf}}
\subfloat[$\hat{u}$]
{\includegraphics[width=0.30\linewidth]{./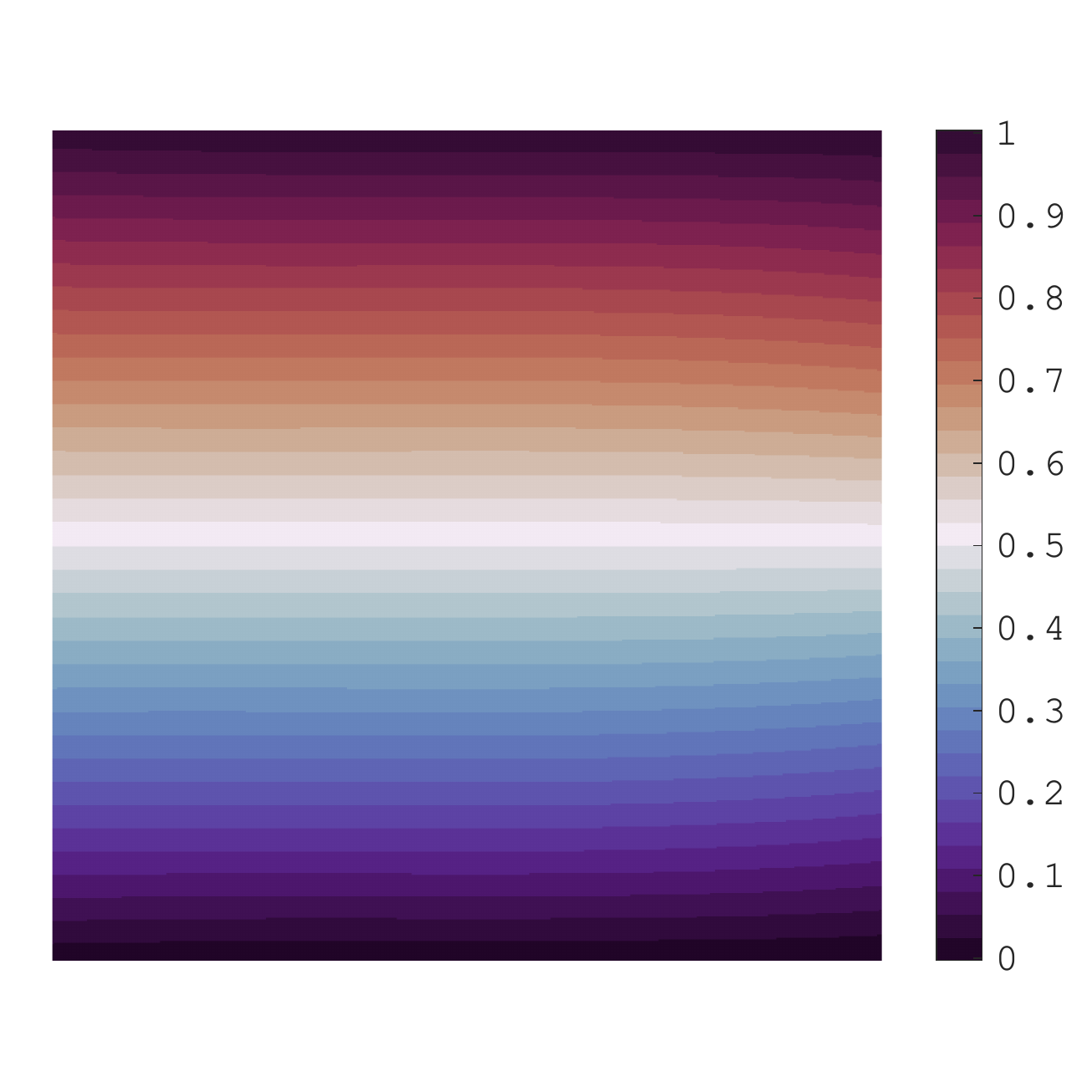}} 
\subfloat[$|\hat{u}-u^{\dagger}|$]
{\includegraphics[width=0.30\linewidth]{./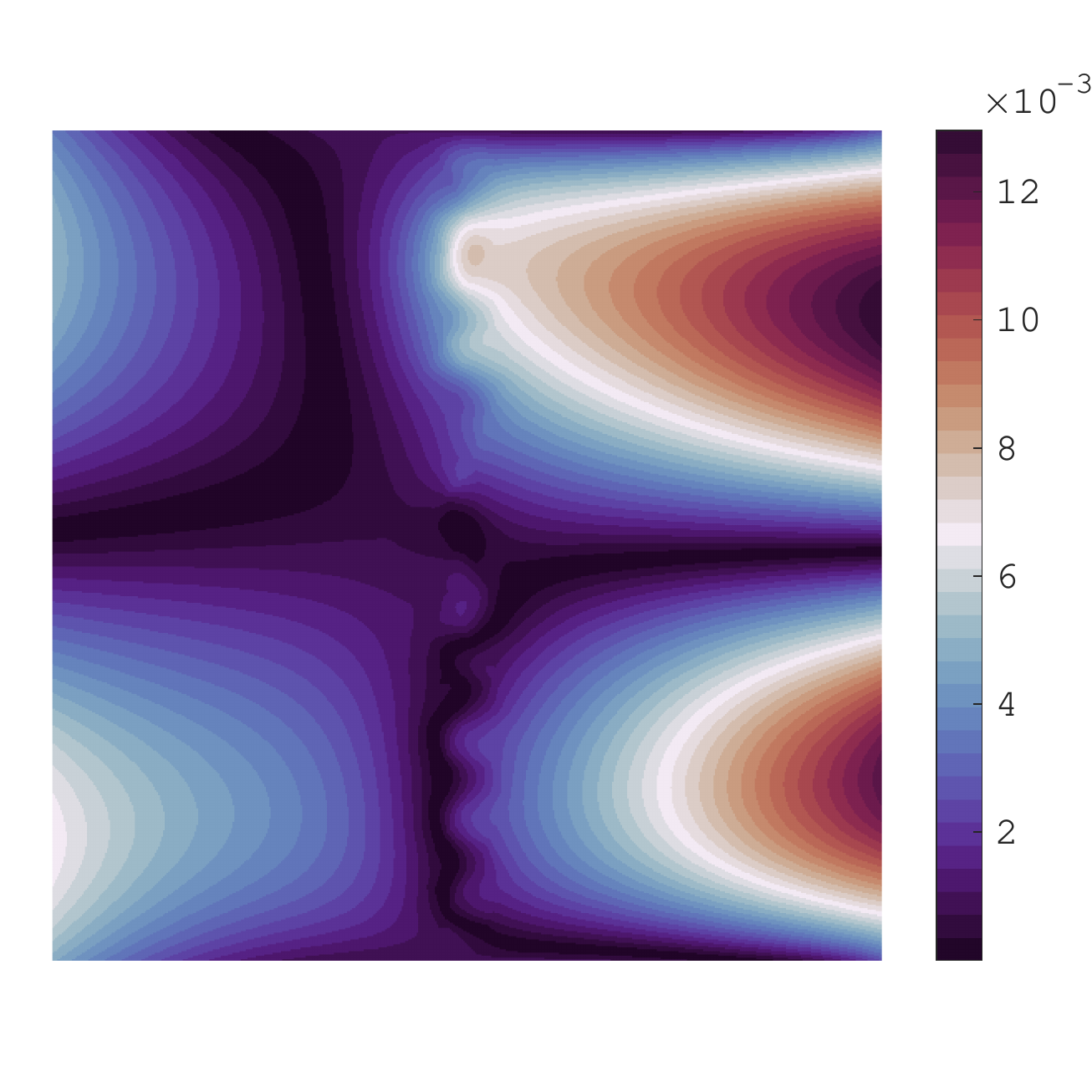}} 
\\
\subfloat[$u^{\dagger}$]
{\includegraphics[width=0.30\linewidth]{./figures/example02solution\_exact.pdf}}
\subfloat[$\hat{u}$]
{\includegraphics[width=0.30\linewidth]{./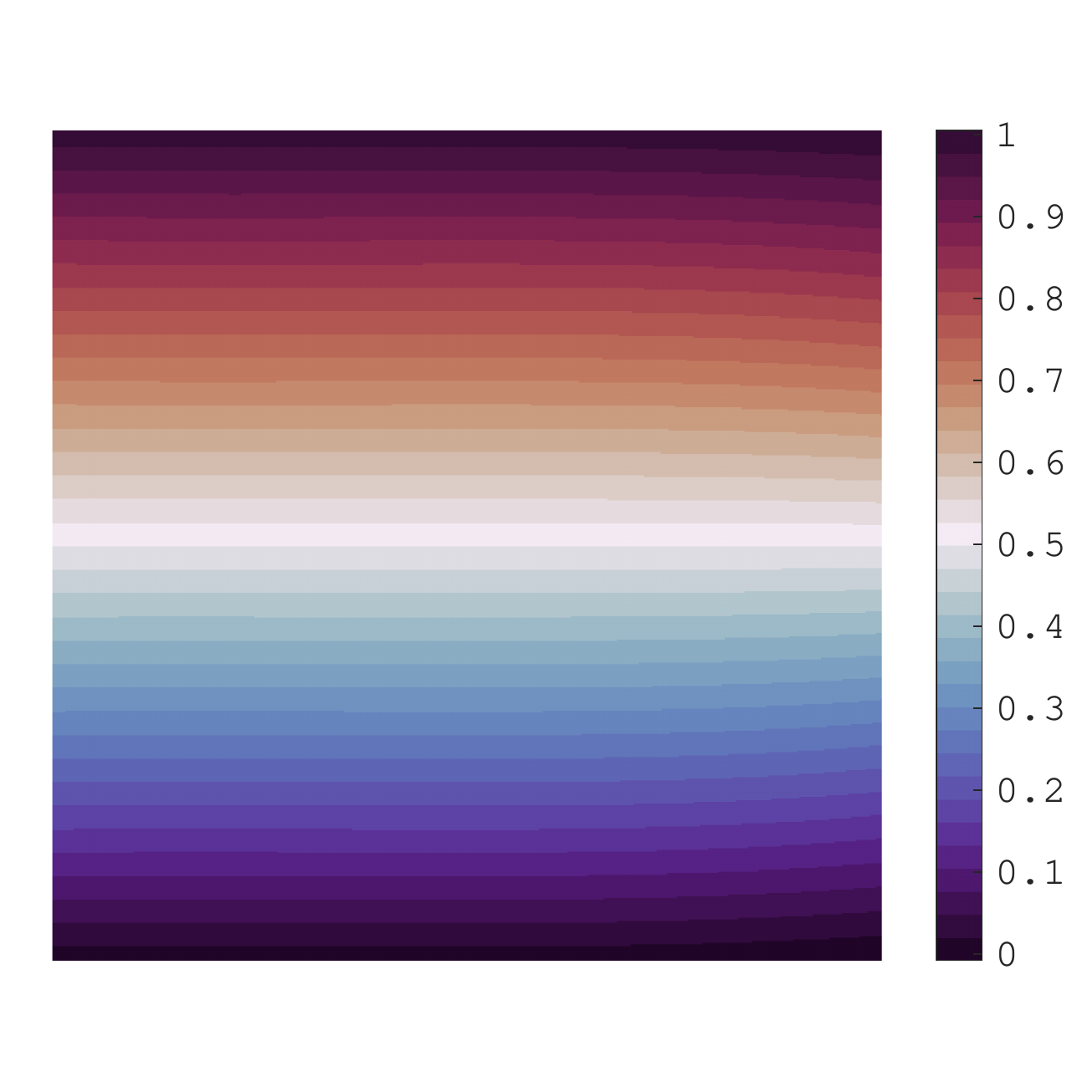}} 
\subfloat[$|\hat{u}-u^{\dagger}|$]
{\includegraphics[width=0.30\linewidth]{./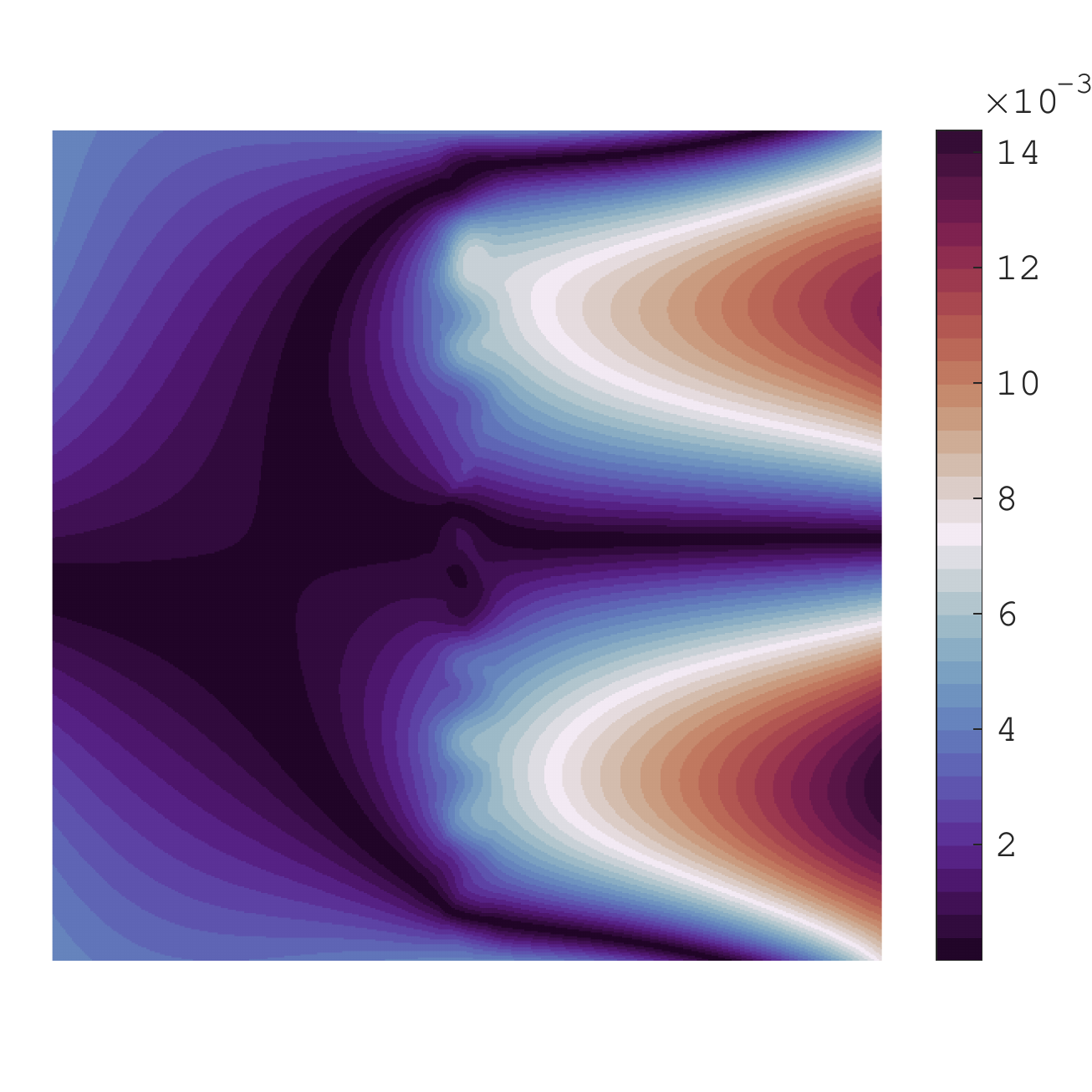}} 
\\
\subfloat[$u^{\dagger}$]
{\includegraphics[width=0.30\linewidth]{./figures/example02solution\_exact.pdf}}
\subfloat[$\hat{u}$]
{\includegraphics[width=0.30\linewidth]{./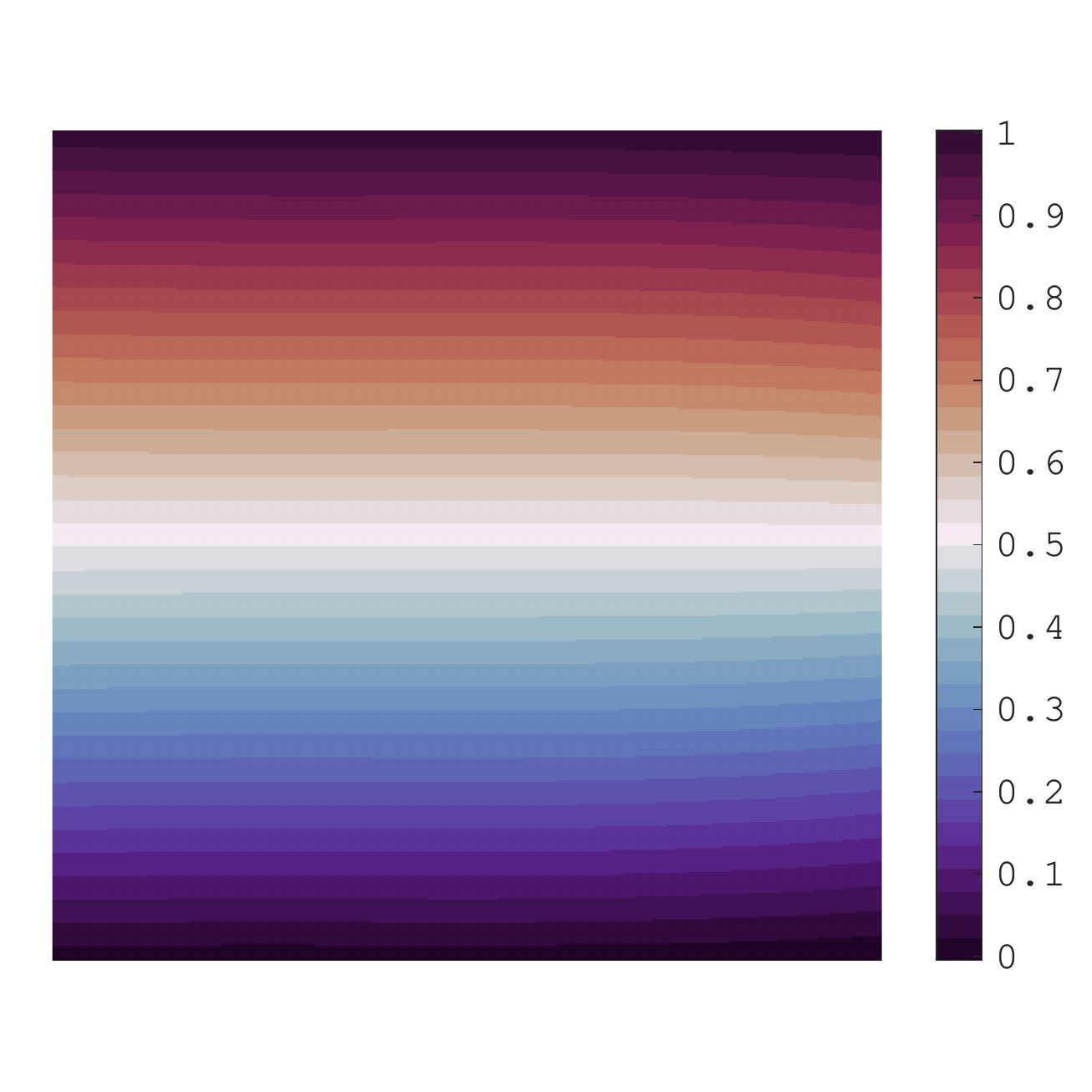}} 
\subfloat[$|\hat{u}-u^{\dagger}|$]
{\includegraphics[width=0.30\linewidth]{./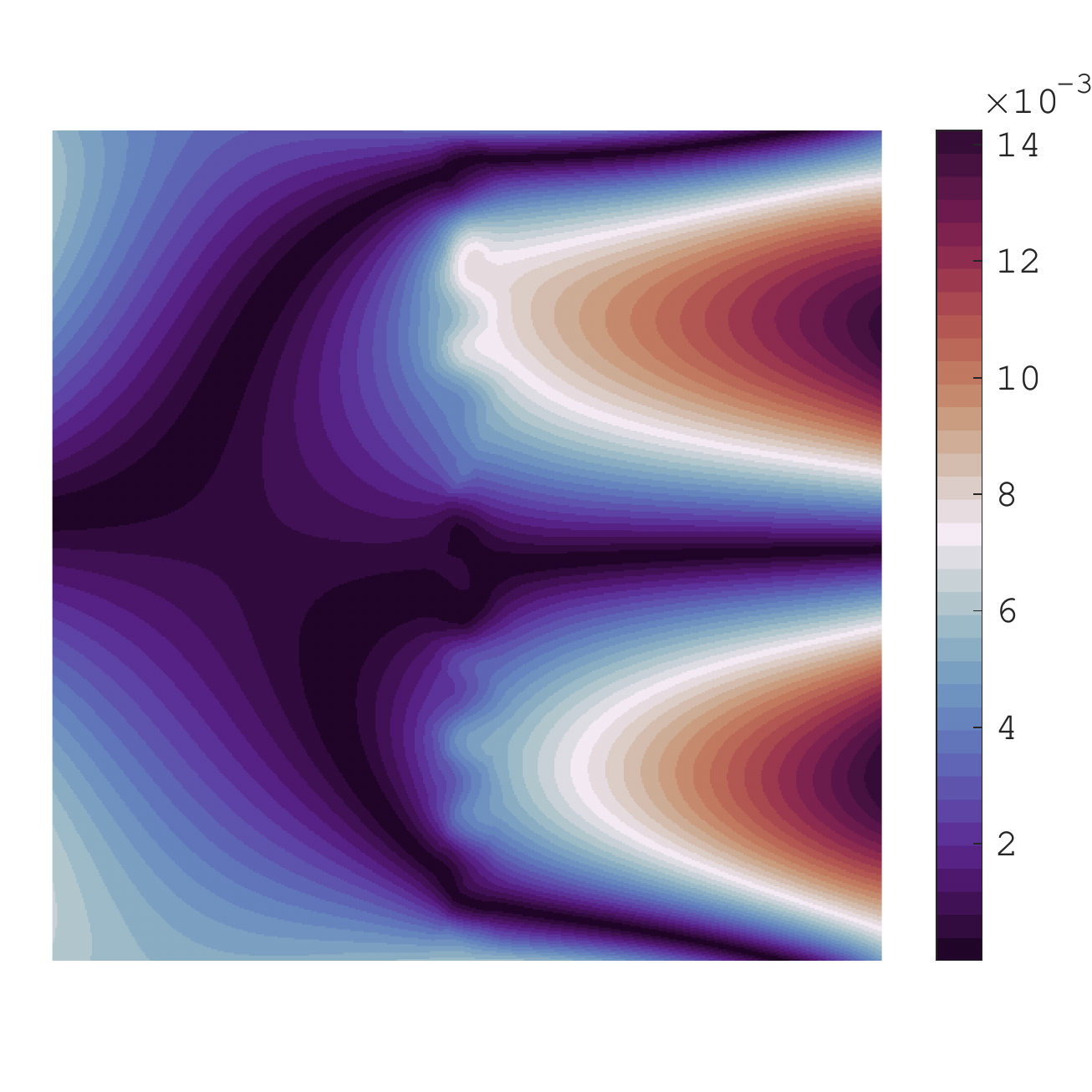}} 
\caption{The ground truth voltage $u^{\dagger}$, reconstruction $\hat{u}$ by our method, and the point-wise absolute error $|\hat{u}-u^{\dagger}|$ in \cref{example:dc}. $\delta=1\%$ (top) $\delta=10\%$ (middle) $\delta=20\%$ (bottom).}
\label{fig:example2:u}
\end{figure}

\begin{example}[CDII with disjoint modes in conductivity]\label{example:disjoint}
Let $\Omega_{1}=\{(x, y):100(x-0.3)^2+36(y-0.7)^2-72(x-0.3)(y-0.7)<1\}$ and $\Omega_{2}=\{(x, y): 36(x-0.6)^2+36(y-0.4)^2<1\}$. We set the ground true conductivity as  
\begin{equation*}
\gamma^{\dagger}(x,y)=1+\chi_{\Omega_{1}}-\chi_{\Omega_{2}}.
\end{equation*}
\end{example}
\par In this example, we set the noise level $\delta=10\%$ and use the TV-regularization. Referring to (2.7) in \cite{Jin2022imaging}, we replace the term $|\nabla\gamma(x)|$ in TV-regularization with Huber function
\begin{equation*}
h(\gamma)=
\begin{cases}
|\nabla\gamma(x)|, & |\nabla\gamma(x)|\geq\zeta, \\
\frac{|\nabla\gamma(x)|^{2}}{2\zeta}+\frac{\zeta}{2}, & \text{otherwise},
\end{cases}
\end{equation*}
where $\zeta>0$ is a small constant. We set $\zeta=0.001$ in this example.
\par The noisy data $a^{\delta}$ and it reconstructed counterpart $\hat{a}$ are illustrated in \cref{fig:example3:observation}. \cref{fig:example3:gamma} shows a comparison between ground truth conductivity $\gamma^{\dagger}$ and its reconstruction $\hat{\gamma}$. As can be seen, our method determines the shape of the different modes precisely, and thanks to the TV-regularization, it can capture the nature of the piece-wise constants of the ground truth conductivity, while the $L^{2}$-regularization can not. A comparison between the ground truth voltage $u^{\dagger}$ and its reconstruction $\hat{u}$ is shown in \cref{fig:example3:u}. 
\begin{table}
\caption{The relative $L^{2}$-error of the recovered conductivity $\hat{\gamma}$ and voltage $\hat{u}$ obtained by $L^{2}$-regularization and TV-regularization in \cref{example:disjoint}.}
\centering
\begin{tabular}{lcc}
\toprule
 & $L^{2}$-regularization & TV-regularization  \\
\midrule
$\err(\gamma)$ & $7.19 \times 10^{-2}$ & $5.98 \times 10^{-2}$ \\
$\err(u)$ & $7.62 \times 10^{-3}$ & $7.13 \times 10^{-3}$ \\
\bottomrule
\end{tabular}
\label{tab:example3:error}
\end{table}
\begin{figure}
\centering
\subfloat[$a^{\dagger}$]
{\includegraphics[width=0.30\linewidth]{./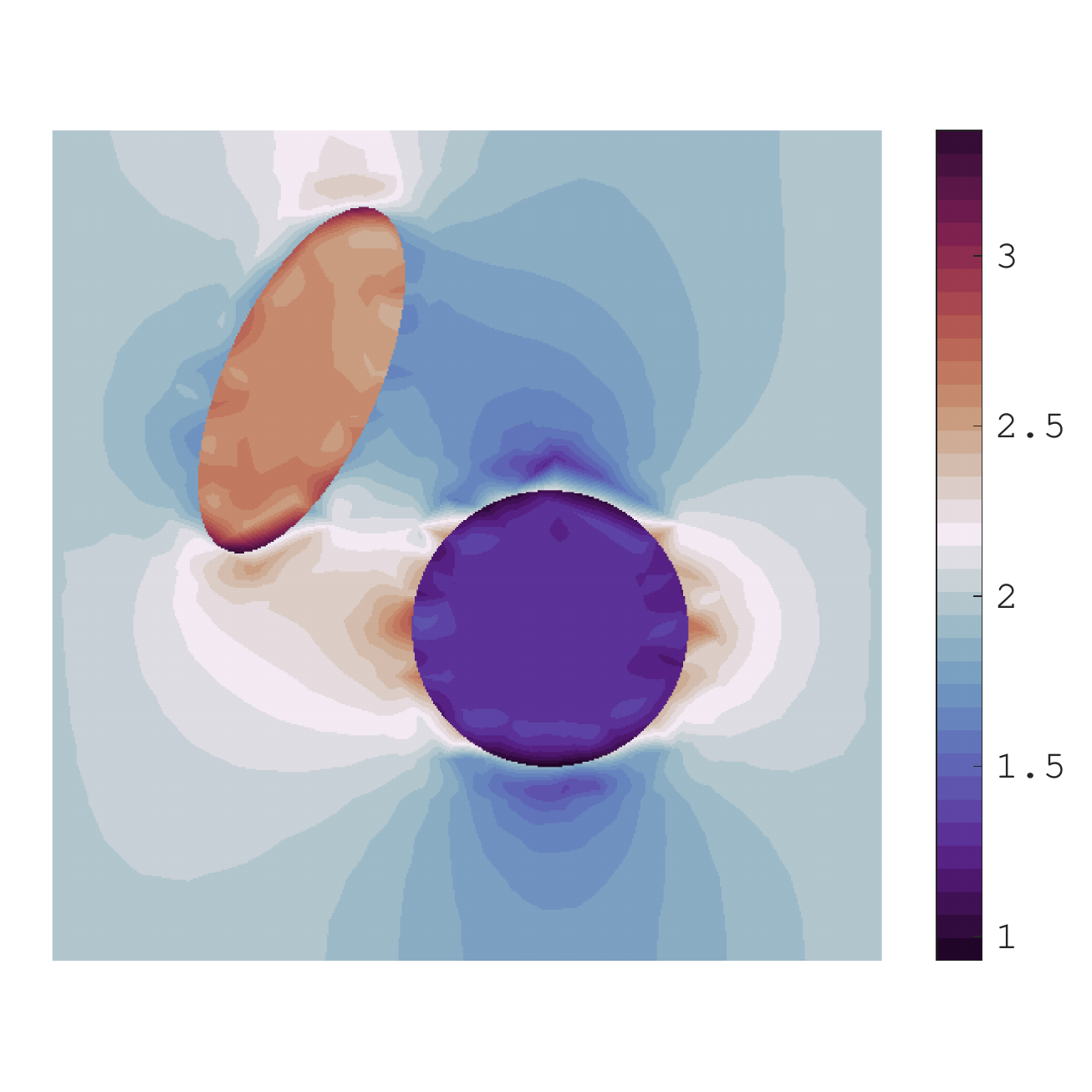}}
\subfloat[$a^{\delta}$]
{\includegraphics[width=0.30\linewidth]{./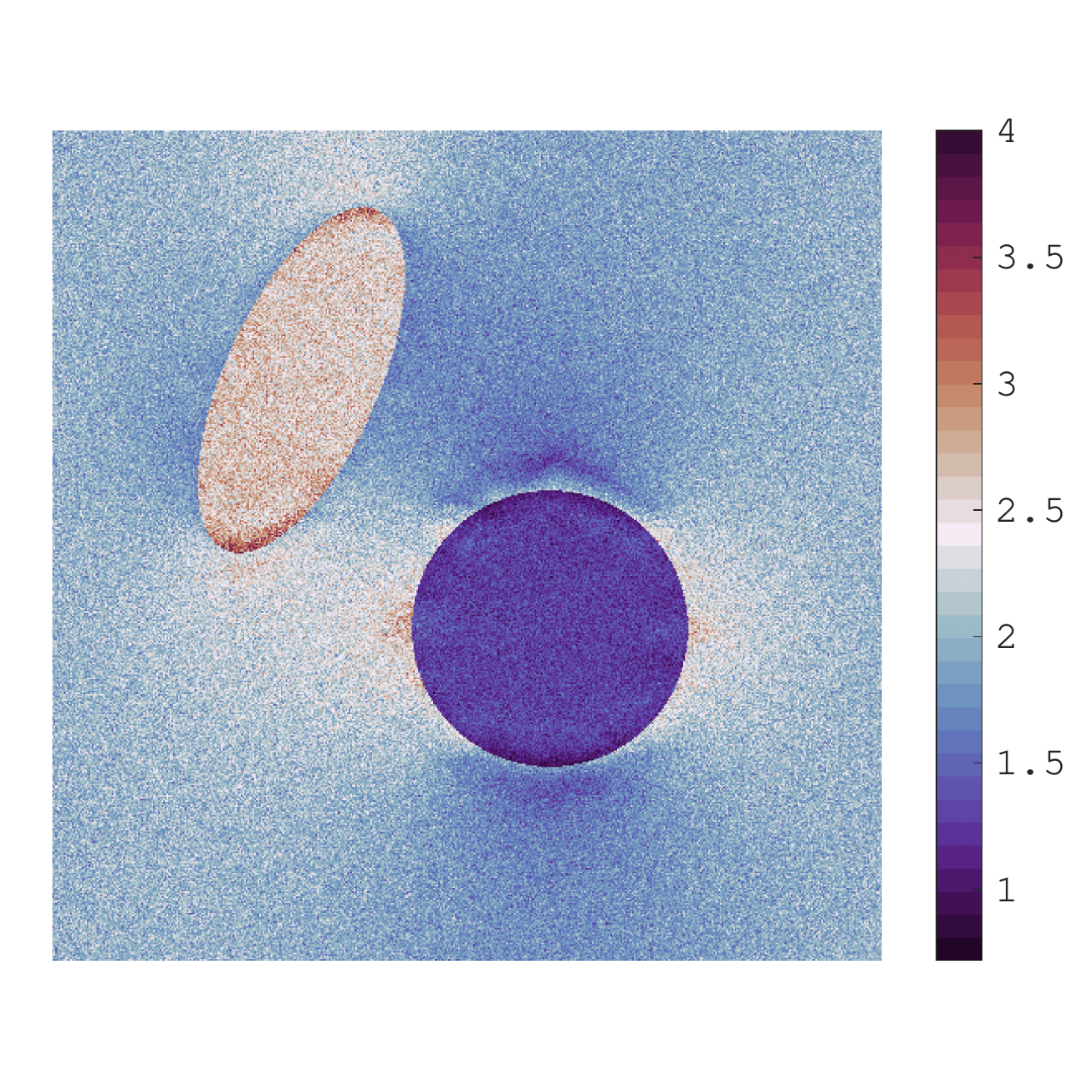}}
\subfloat[$\hat{a}$]
{\includegraphics[width=0.30\linewidth]{./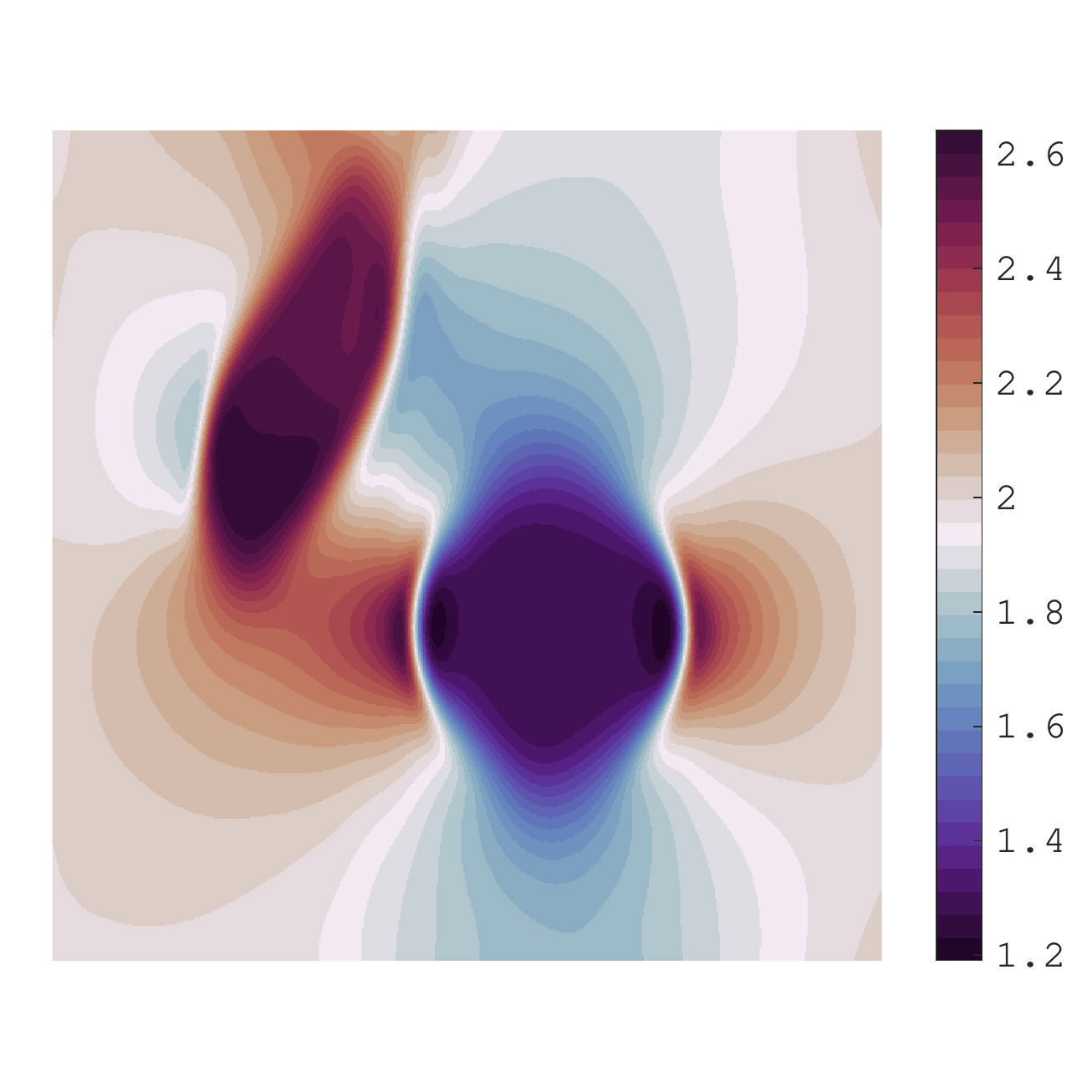}}
\caption{The ground truth $a^{\dagger}$, noisy measurements $a^{\delta}$ with $\delta=10\%$, and reconstruction $\hat{a}$ by our method in \cref{example:disjoint}. TV-regularization}
\label{fig:example3:observation}
\end{figure}
\begin{figure}
\centering
\subfloat[$\gamma^{\dagger}$]
{\includegraphics[width=0.30\linewidth]{./figures/example03gamma\_exact.pdf}}
\subfloat[$\hat{\gamma}$]
{\includegraphics[width=0.30\linewidth]{./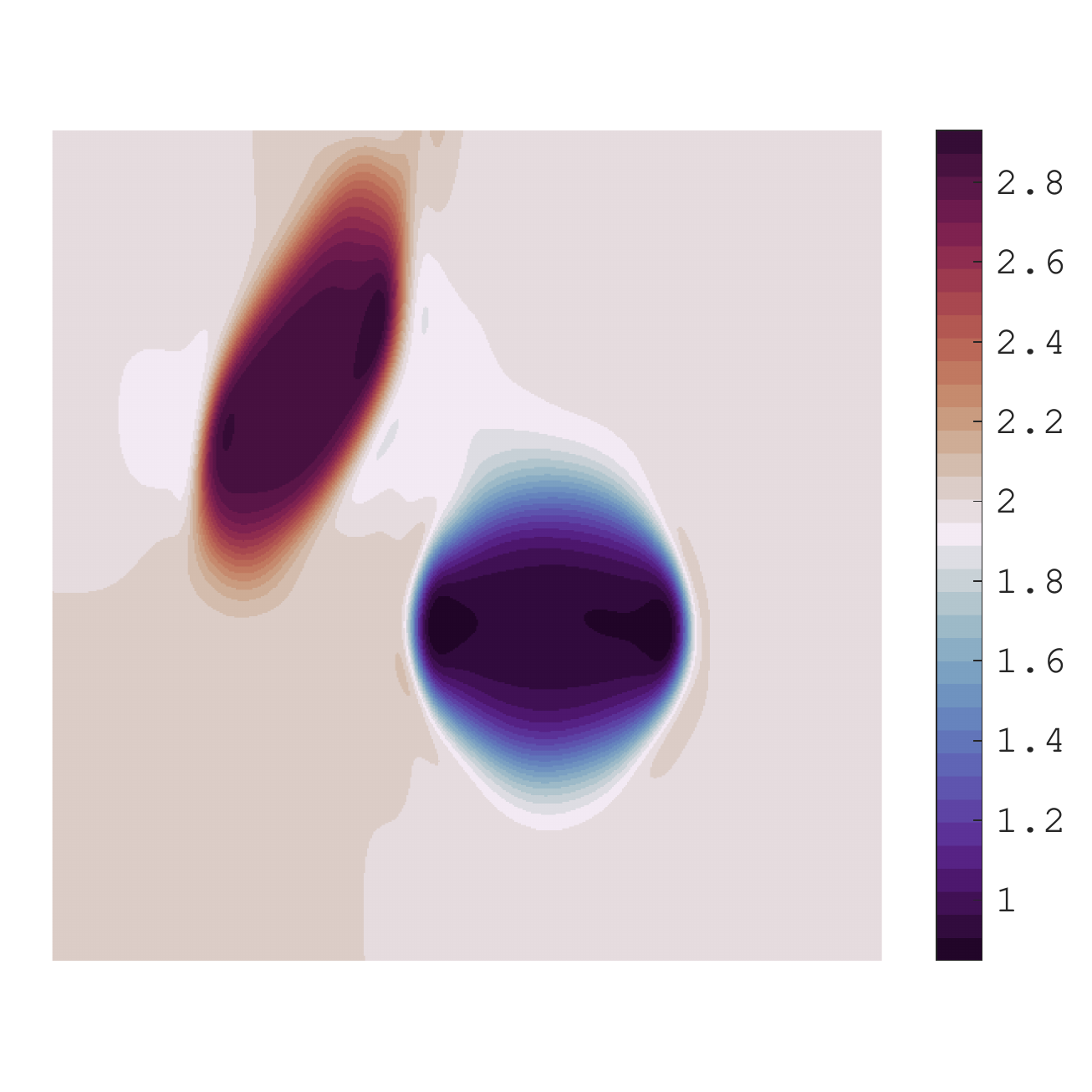}} 
\subfloat[$|\hat{\gamma}-\gamma^{\dagger}|$]
{\includegraphics[width=0.30\linewidth]{./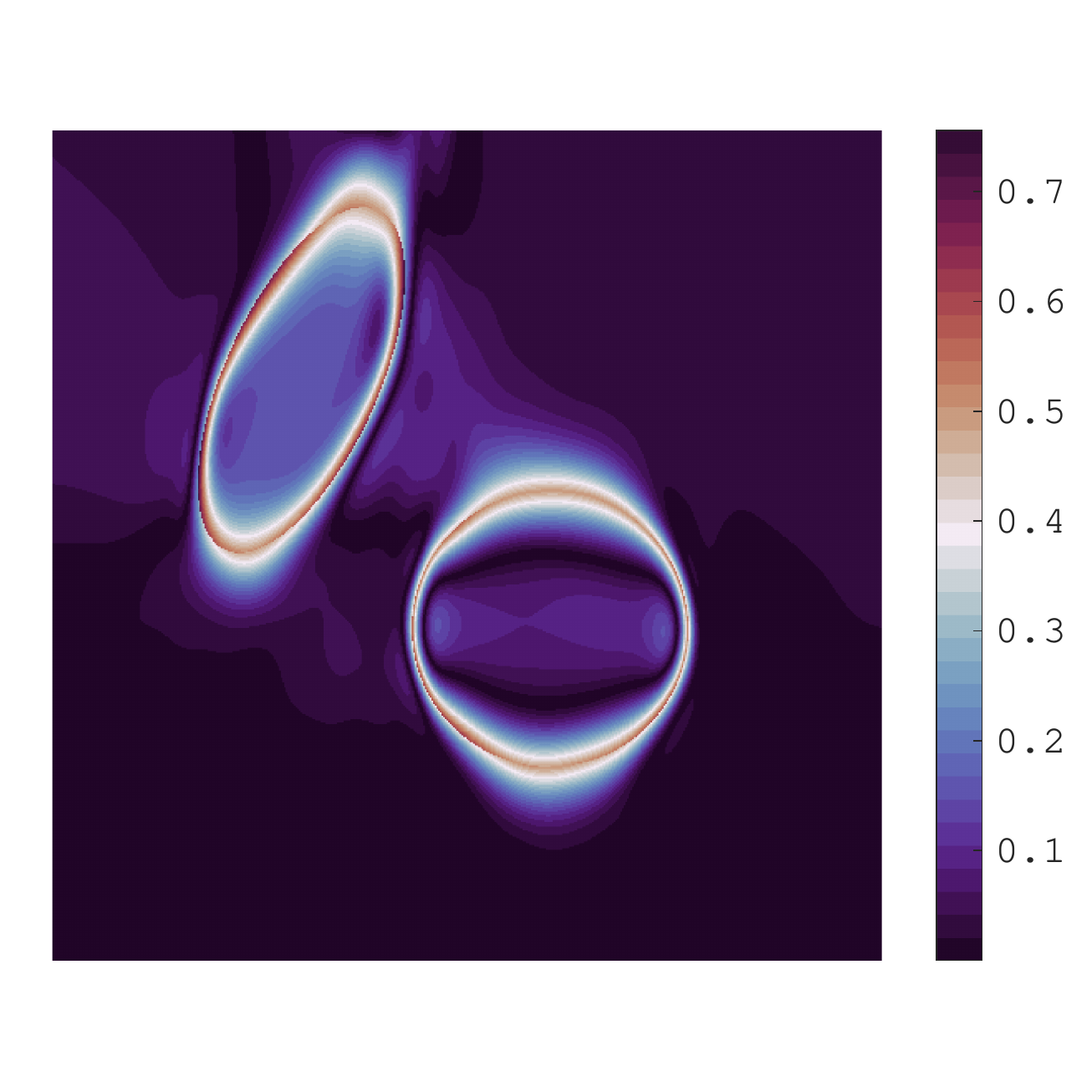}} 
\\
\subfloat[$\gamma^{\dagger}$]
{\includegraphics[width=0.30\linewidth]{./figures/example03gamma\_exact.pdf}}
\subfloat[$\hat{\gamma}$]
{\includegraphics[width=0.30\linewidth]{./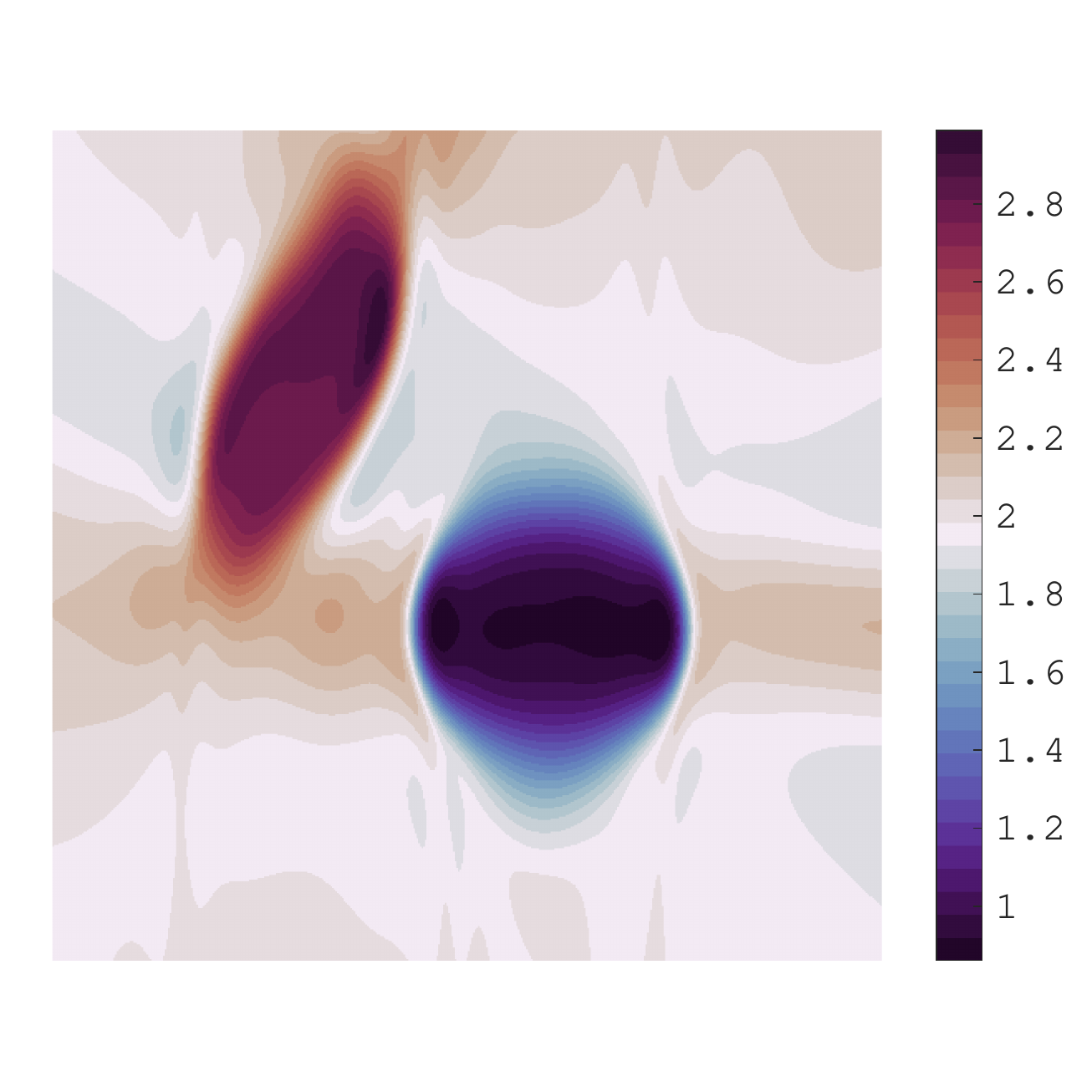}} 
\subfloat[$|\hat{\gamma}-\gamma^{\dagger}|$]
{\includegraphics[width=0.30\linewidth]{./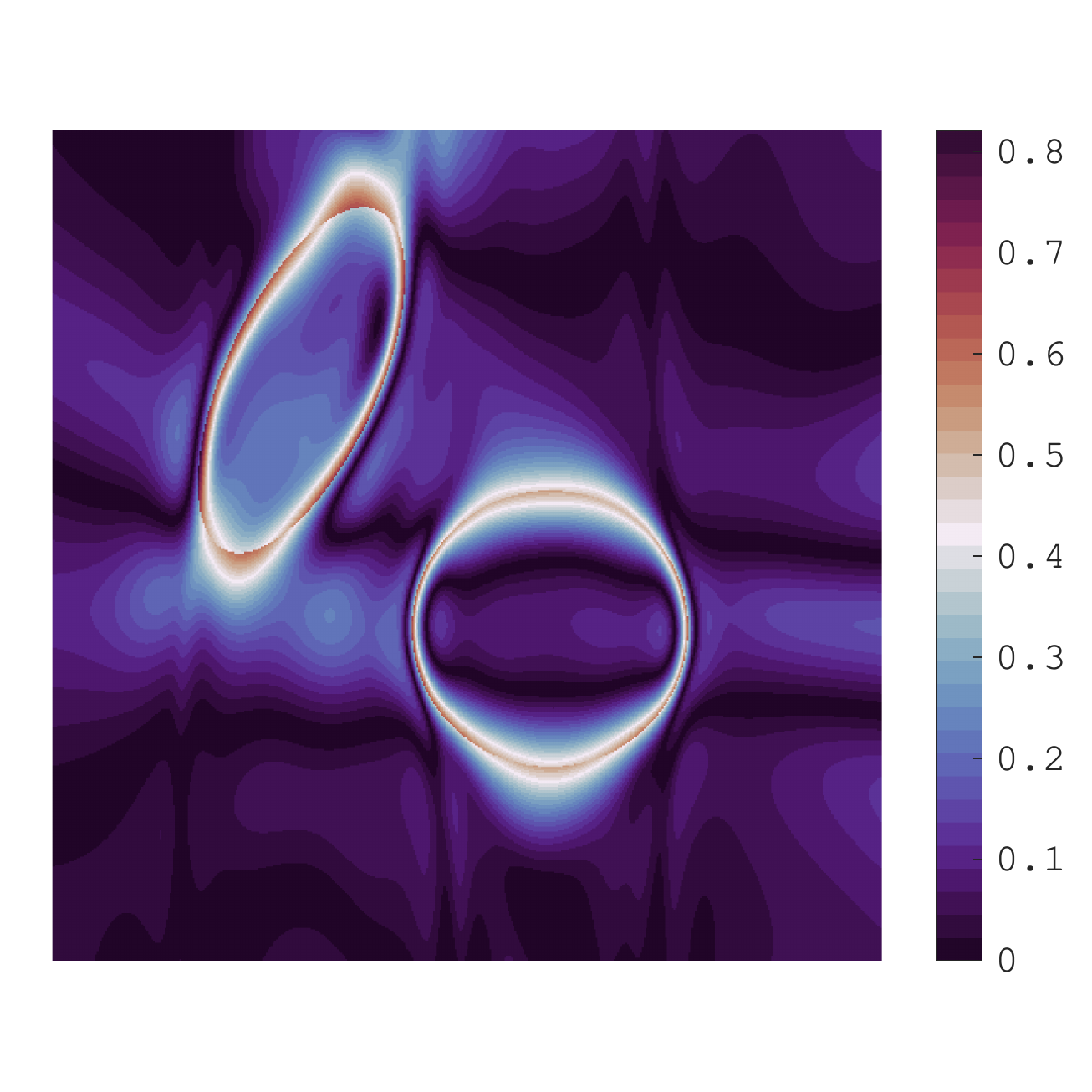}} 
\caption{The ground truth conductivity $\gamma^{\dagger}$, reconstruction $\hat{\gamma}$ by our method, and the point-wise absolute error $|\hat{\gamma}-\gamma^{\dagger}|$ in  \cref{example:disjoint}. $\delta=10\%$. TV-regularization (top) $L^{2}$-regularization (bottom)}
\label{fig:example3:gamma}
\end{figure}
\begin{figure}
\centering
\subfloat[$u^{\dagger}$]
{\includegraphics[width=0.30\linewidth]{./figures/example03solution\_exact.pdf}}
\subfloat[$\hat{u}$]
{\includegraphics[width=0.30\linewidth]{./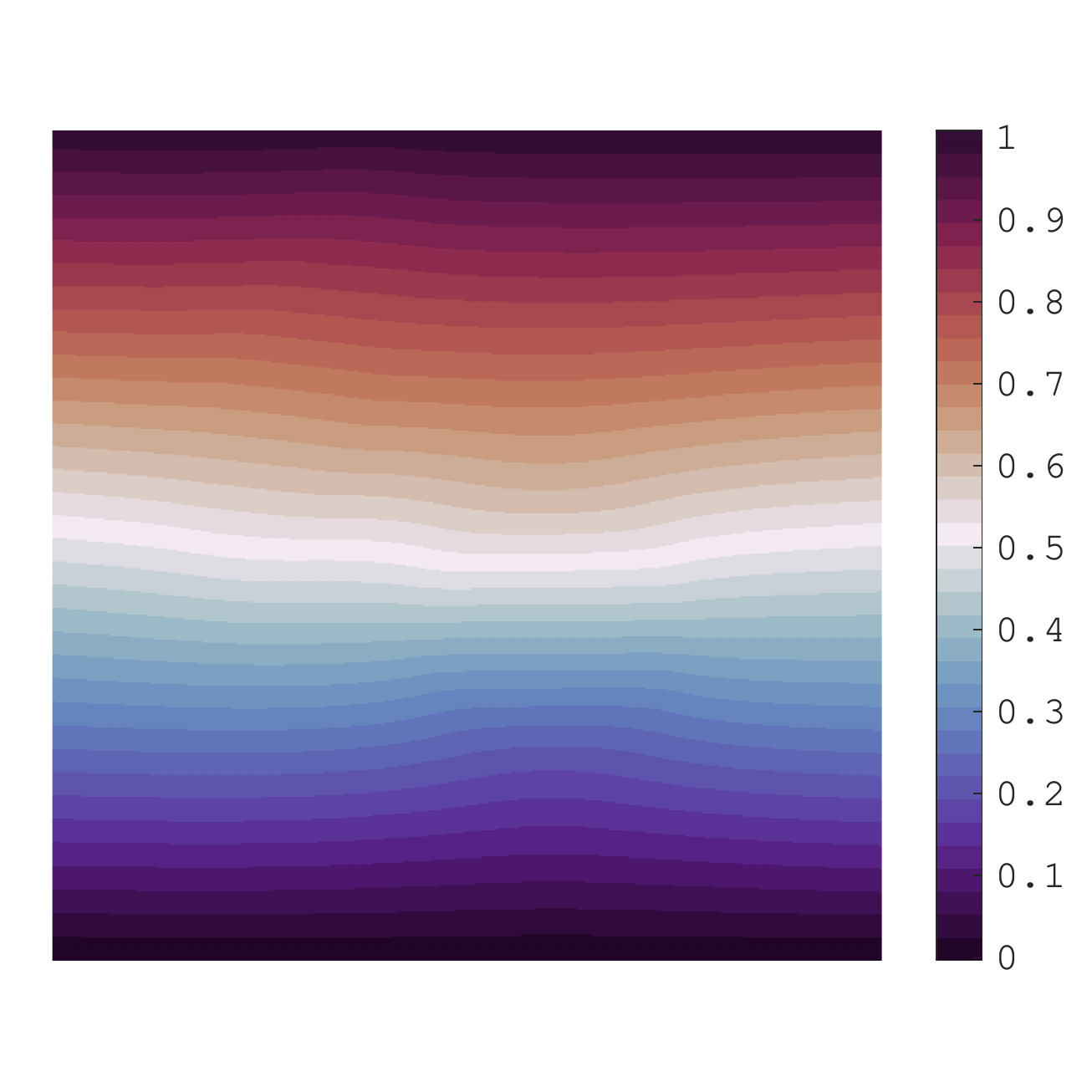}} 
\subfloat[$|\hat{u}-u^{\dagger}|$]
{\includegraphics[width=0.30\linewidth]{./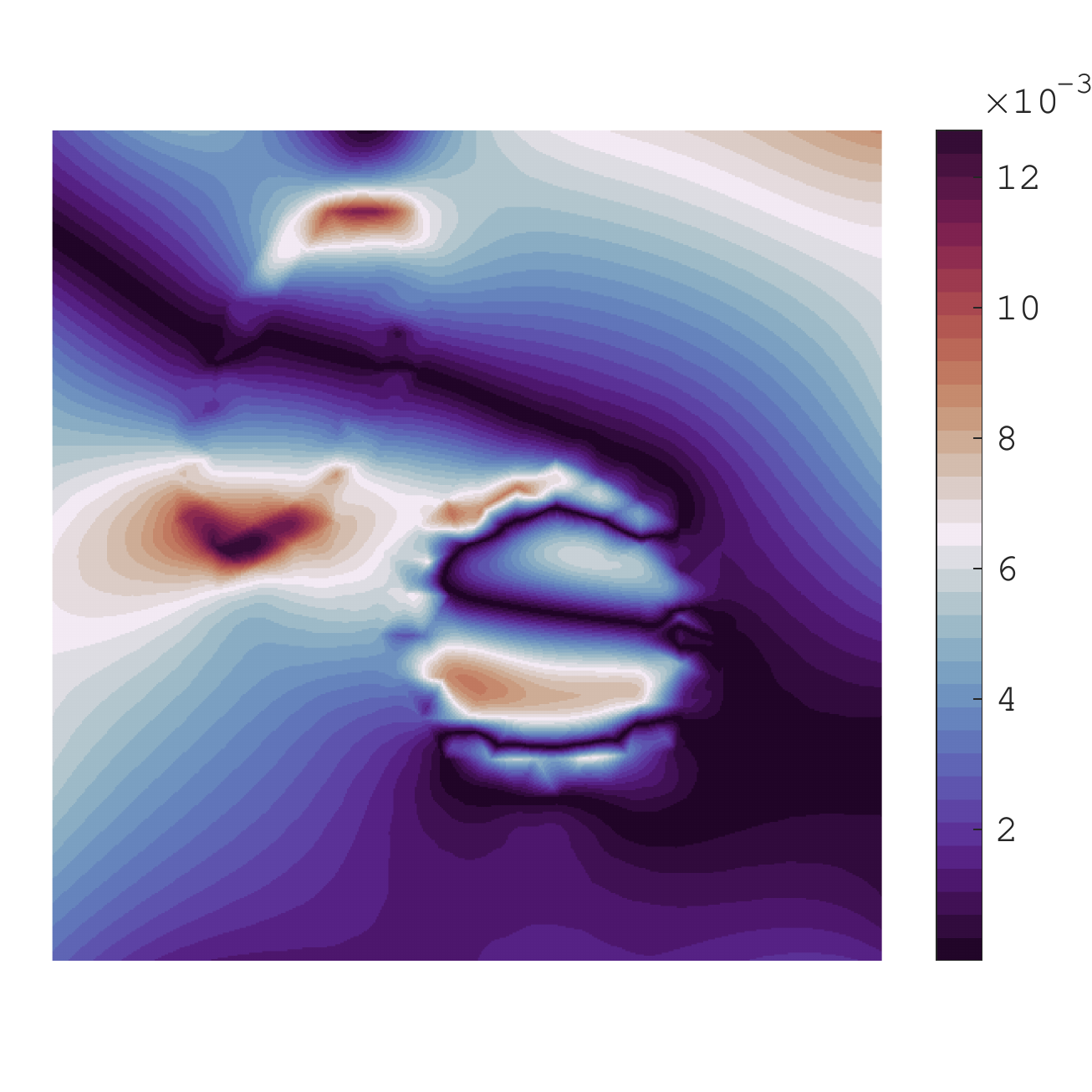}} 
\caption{The ground truth voltage $u^{\dagger}$, reconstruction $\hat{u}$ by our method, and the point-wise absolute error $|\hat{u}-u^{\dagger}|$ in \cref{example:disjoint}. $\delta=10\%$ TV-regularization}
\label{fig:example3:u}
\end{figure}

\section{Conclusions}\label{conc}
\par In this paper, we propose a PINNs-based method for solving CDII. We first use two neural networks to represent the conductivity and voltage and then construct the loss function over the measurement data. Our method directly reconstructs the conductivity and voltage by minimizing the loss function, avoiding iterative updates. We present an error estimate and give a convergence rate. At the same time, our error analysis can provide a way to choose the structure of the neural networks. The stability of CDII will be taken into account in future work. The method has shown robustness to noise in numerical experiments. Finally, the coupled-PINNs method can be directly extended to general inverse problems, although we have only considered CDII in this paper. Simultaneously, the error analysis framework and techniques are readily applicable.

\section{Proofs}\label{append}
\subsection{The proof of the error decomposition}\label{sec:appendix:errdec}
\par Inspired by the proof of Lemma 4 in \cite{Johannes2020Nonparametric}, we prove \cref{lemma:err:dec} in this section.
\par For an estimator $(\tilde{\gamma}_{n},\tilde{u}_{n})\in\calF_{\gamma}\times\calF_{u}$ depending upon a data sample $S$, we introduce its expected empirical risk
\begin{equation*}
R_{n}(\tilde{\gamma}_{n},\tilde{u}_{n}):=\mathbb{E}_{S}\Big[\frac{1}{n}\sum_{i=1}^{n}(\tilde{\gamma}_{n}|\nabla\tilde{u}_{n}|(X_{i})-\gamma^{*}|\nabla u^{*}|(X_{i}))^{2}\Big]+\mathbb{E}_{S}\Big[G_{n}(\tilde{\gamma}_{n},\tilde{u}_{n})\Big],
\end{equation*}
and recall its optimization error
\begin{equation*}
\Delta_{n}(\tilde{\gamma}_{n},\tilde{u}_{n})=\mathbb{E}_{S}\Big[L_{n}(\tilde{\gamma}_{n},\tilde{u}_{n})-L_{n}(\hat{\gamma}_{n},\hat{u}_{n})\Big].
\end{equation*}
\begin{lemma}\label{lemma:errdec:1}
Let $\calF_{\gamma}$ and $\calF_{u}$ be two function classes with $N_{\gamma}=N(\varepsilon,\calF_{\gamma},\|\cdot;W^{1,\infty}(U)\|)$ and $N_{u}=N(\varepsilon,\calF_{u},\|\cdot;W^{2,\infty}(U)\|)$. Suppose \cref{ass:gammau:bound,ass:fg:bound} are fulfilled. Then it holds for each estimator $(\tilde{\gamma}_{n},\tilde{u}_{n})$ taking values in $\calF_{\gamma}\times\calF_{u}$ that
\begin{align*}
&R(\tilde{\gamma}_{n},\tilde{u}_{n})\lesssim R_{n}(\tilde{\gamma}_{n},\tilde{u}_{n})+(B_{\gamma}^{2}B_{u}^{2}+B_{f}^{2})\frac{\log(N_{\gamma}N_{u})}{n}+B_{\gamma}B_{u}(B_{\gamma}+B_{u})\varepsilon,
\end{align*}
with $\delta>0$ and $n$ large enough.
\end{lemma}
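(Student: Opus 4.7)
The plan is to establish a uniform ``fast-rate'' deviation inequality of the form $R_{\mathrm{pop}} \leq 2 R_{\mathrm{emp}} + \calO(\log(N_\gamma N_u)/n) + \calO(\varepsilon)$ that holds over the entire hypothesis class $\calF_\gamma \times \calF_u$, and then to evaluate it at the estimator $(\tilde\gamma_n,\tilde u_n)$. Writing $R_{\mathrm{pop}}(\gamma,u)$ and $R_{\mathrm{emp}}(\gamma,u;S)$ for the population and sample-dependent versions of the excess risk (so that $R = \bbE_S R_{\mathrm{pop}}$ and $R_n = \bbE_S R_{\mathrm{emp}}$), the key point is that the squared-loss structure is self-bounding, which produces the rate $1/n$ in place of the usual $1/\sqrt{n}$.

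First, I would fix $\varepsilon$-nets $\calF_\gamma^\varepsilon\subseteq\calF_\gamma$ in $\|\cdot\|_{W^{1,\infty}(U)}$ and $\calF_u^\varepsilon\subseteq\calF_u$ in $\|\cdot\|_{W^{2,\infty}(U)}$, of cardinalities $N_\gamma$ and $N_u$. For any $(\gamma,u)$ with nearest product-net neighbour $(\gamma^\varepsilon,u^\varepsilon)$, the product and chain rules combined with \cref{ass:gammau:bound} show that each of the three pointwise summands $\gamma|\nabla u|-\gamma^\dagger|\nabla u^\dagger|$, $\nabla\cdot(\gamma\nabla u)$ and $Tu-f$ is perturbed by at most $C(B_\gamma+B_u)\varepsilon$ in sup-norm, while the summands themselves remain $\calO(B_\gamma B_u+B_f)$. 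Using $|f^2-g^2|\le|f-g|(|f|+|g|)$, the squared loss therefore changes by at most $CB_\gamma B_u(B_\gamma+B_u)\varepsilon$, reducing the task to a uniform bound over the finite product net.

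For each fixed $(\gamma,u)$ in the net, the three summands are almost-surely bounded by $K\lesssim B_\gamma B_u+B_f$, so the self-bounding inequality $\operatorname{Var}(f^2)\le K^2\,\bbE[f^2]$ applies, and Bernstein's inequality yields, with probability at least $1-3e^{-s}$,
\[
R_{\mathrm{pop}}(\gamma,u)-R_{\mathrm{emp}}(\gamma,u;S)\leq\sqrt{\tfrac{2K^2 R_{\mathrm{pop}}(\gamma,u)\, s}{n}}+\tfrac{CK^2 s}{n}.
\]
The AM--GM inequality $2\sqrt{ab}\le a+b$ absorbs $\tfrac12R_{\mathrm{pop}}(\gamma,u)$ into the left-hand side, yielding the fast-rate bound $R_{\mathrm{pop}}(\gamma,u)\le 2R_{\mathrm{emp}}(\gamma,u;S)+CK^2 s/n$. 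A union bound over the $N_\gamma N_u$ pairs in the net with $s=\log(N_\gamma N_u)+u$ gives the analogous uniform statement on an event of probability $\ge 1-3e^{-u}$; combining with the covering step, using the deterministic a priori bound $R_{\mathrm{pop}}\le CK^2$ to integrate the tail in $u$, and finally taking $\bbE_S$ on both sides produces the claimed inequality with prefactor $K^2\lesssim B_\gamma^2 B_u^2+B_f^2$ in front of $\log(N_\gamma N_u)/n$.

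The main obstacle is the variance control in the Bernstein step for the PDE residual $\nabla\cdot(\gamma\nabla u)=\nabla\gamma\cdot\nabla u+\gamma\Delta u$, which couples $\nabla\gamma$ with $\nabla^2 u$; the uniform $W^{1,\infty}\times W^{2,\infty}$-boundedness in \cref{ass:gammau:bound} is precisely what guarantees a deterministic sup-norm bound $K$ on this residual. A similar care is required for the boundary residual, where \cref{ass:fg:bound} combined with a trace inequality on the smooth domain $U$ yields $\|Tu-f\|_{L^\infty(\partial U)}^2\lesssim B_u^2+B_f^2$. Once these uniform pointwise bounds are in place, the Bernstein--AM-GM scheme runs for all three summands simultaneously and produces the single prefactor $B_\gamma^2 B_u^2+B_f^2$ appearing in the statement.
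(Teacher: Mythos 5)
Your proposal is correct and follows essentially the same route as the paper: an $\varepsilon$-net in $W^{1,\infty}\times W^{2,\infty}$ with a Lipschitz perturbation bound of order $B_\gamma B_u(B_\gamma+B_u)\varepsilon$, Bernstein's inequality exploiting the self-bounding variance $\operatorname{Var}(h)\lesssim K^2\,\bbE[h]$ with $K^2\lesssim B_\gamma^2B_u^2+B_f^2$, AM--GM absorption of $\tfrac12 R$ into the left-hand side, and a union bound yielding the $\log(N_\gamma N_u)/n$ term. The only (cosmetic) difference is that the paper symmetrizes with a ghost sample and bounds $\bbE[T^2]$ for a normalized maximum directly, whereas you use a per-net-point tail bound plus tail integration; both give the same result.
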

\begin{proof}
The proof is divided to four parts as follows. 
\par\noindent\emph{Step 1.} For any $\varepsilon>0$, let $\calC_{\gamma}=\{\gamma_{k}:k=1,\ldots,N_{\gamma}\}$ be the minimal $\varepsilon$-cover of $\calF_{\gamma}$ with respect to $\|\cdot\|_{W^{1,\infty}(U)}$, and let $\calC_{u}=\{u_{\ell}:\ell=1,\ldots,N_{u}\}$ be the minimal $\varepsilon$-cover of $\calF_{u}$ with respect to $\|\cdot\|_{W^{2,\infty}(U)}$. Then for an estimator $(\tilde{\gamma}_{n},\tilde{u}_{n})\in\calF_{\gamma}\times\calF_{u}$, there exists $(\gamma_{k^{*}},u_{\ell^{*}})\in\calC_{\gamma}\times\calC_{u}$, such that
\begin{equation*}
\|\gamma_{k^{*}}-\tilde{\gamma}_{n}\|_{W^{1,\infty}(U)}\leq\varepsilon\quad\text{and}\quad\|u_{\ell^{*}}-\tilde{u}_{n}\|_{W^{2,\infty}(U)}\leq\varepsilon.
\end{equation*} 
Without loss of generality, we can assume that $\|\gamma_{k}\|_{W^{1,\infty}(U)}\leq B_{\gamma}$ and $\|u_{\ell}\|_{W^{2,\infty}(U)}\leq B_{u}$. Generate a ghost sample $S^{\prime}=\{(X_{i}^{\prime},Y_{i}^{\prime})\}_{i=1}^{n}\cup\{\bar{X}_{i}^{\prime}\}_{i=1}^{n}$ independent of $S$, where $(X_{i}^{\prime},Y_{i}^{\prime})$ and and $\bar{X}_{i}^{\prime}$ are $n$ independent copies of $(X,Y)$ and $\bar{X}_{i}^{\prime}$, respectively. Then we have
\begin{equation}\label{eq:RsubRn}
\begin{aligned}
|R(\tilde{\gamma}_{n},\tilde{u}_{n})-R_{n}(\tilde{\gamma}_{n},\tilde{u}_{n})|
&=\Big|\mathbb{E}_{S^{\prime}}\Big[\frac{1}{n}\sum_{i=1}^{n}(\tilde{\gamma}_{n}|\nabla\tilde{u}_{n}|(Z_{i}^{\prime})-\gamma^{*}|\nabla u^{*}|(Z_{i}^{\prime}))^{2}\Big]\\
&\quad-\mathbb{E}_{S}\Big[\frac{1}{n}\sum_{i=1}^{n}(\tilde{\gamma}_{n}|\nabla\tilde{u}_{n}|(Z_{i})-\gamma^{*}|\nabla u^{*}|(Z_{i}))^{2}\Big] \\
&\quad+\mathbb{E}_{S^{\prime}}\Big[\frac{1}{n}\sum_{i=1}^{n}g(\tilde{\gamma}_{n},\tilde{u}_{n},Z_{i}^{\prime})\Big]-\mathbb{E}_{S}\Big[\frac{1}{n}\sum_{i=1}^{n}g(\tilde{\gamma}_{n},\tilde{u}_{n},Z_{i})\Big]\Big| \\
&\leq\mathbb{E}_{S}\mathbb{E}_{S^{\prime}}\Big[\Big|\frac{1}{n}\sum_{i=1}^{n}\varphi_{k^{*},\ell^{*}}(Z_{i},Z_{i}^{\prime})\Big|\Big]+28B_{\gamma}B_{u}(B_{\gamma}+B_{u})\varepsilon,
\end{aligned}
\end{equation} 
where 
\begin{align*}
g(\gamma,u,X,\bar{X})&:=(\nabla\cdot(\gamma\nabla u)(X))^{2}+(Tu(\bar{X})-f(\bar{X}_{i}))^{2} \\
h(\gamma_{k},u_{\ell},X,\bar{X})&:=(\gamma_{k}|\nabla u_{\ell}|(X)-\gamma^{*}|\nabla u^{*}|(X))^{2}+g(\gamma_{k},u_{\ell},X,\bar{X}), \\
\varphi_{k,\ell}(X,\bar{X},X^{\prime},\bar{X}^{\prime})&:=h(\gamma_{k},u_{\ell},X,\bar{X})-h(\gamma_{k},u_{\ell},X^{\prime},\bar{X}^{\prime}),
\end{align*}
for $k=1,\ldots,N_{\gamma}$ and $\ell=1,\ldots,N_{u}$.
\par\noindent\emph{Step 2.} Set $r_{k,\ell}=\max\{A,\mathbb{E}_{(X,\bar{X})}^{1/2}[h(\gamma_{k},u_{\ell},X,\bar{X})]\}$ with $A>0$ for $k=1,\ldots,N_{\gamma}$ and $\ell=1,\ldots,N_{u}$, then 
\begin{equation}\label{eq:rstar2}
\begin{aligned}
r_{k^{*},\ell^{*}}^{2}&=\max\big\{A^{2},\mathbb{E}_{(X,\bar{X})}[h(\gamma_{k^{*}},u_{\ell^{*}},X,\bar{X})|S]\big\} \\
&\leq A^{2}+\mathbb{E}_{(X,\bar{X})}[h(\tilde{\gamma}_{n},\tilde{u}_{n},X,\bar{X})|S]+14B_{\gamma}B_{u}(B_{\gamma}+B_{u})\varepsilon,
\end{aligned}
\end{equation}
where the last inequality is follows from  $\|\tilde{\gamma}_{n}-\gamma_{k^{*}}\|_{W^{1,\infty}(U)}\leq\varepsilon$ and $\|\tilde{u}_{n}-u_{\ell^{*}}\|_{W^{2,\infty}(U)}\leq\varepsilon$. Define a random variable
\begin{equation}
T=\max_{k,\ell}\Big|\frac{1}{nr_{k,\ell}}\sum_{i=1}^{n}\varphi_{k,\ell}(X_{i},\bar{X}_{i},X_{i}^{\prime},\bar{X}_{i}^{\prime})\Big|,
\end{equation}
then by (\ref{eq:RsubRn}) we obtain by using Cauchy-Schwarz and AM-GM inequality 
\begin{equation}\label{eq:R:r2:T2}
\begin{aligned}
|R(\tilde{\gamma}_{n},\tilde{u}_{n})-R_{n}(\tilde{\gamma}_{n},\tilde{u}_{n})|&\leq\mathbb{E}_{S,S^{\prime}}[r_{k^{*},\ell^{*}}T]+28B_{\gamma}B_{u}(B_{\gamma}+B_{u})\varepsilon \\
&\leq\frac{1}{2}\mathbb{E}_{S}[r_{k^{*},\ell^{*}}^{2}]+\frac{1}{2}\mathbb{E}_{S,S^{\prime}}[T^{2}]+28B_{\gamma}B_{u}(B_{\gamma}+B_{u})\varepsilon.
\end{aligned}
\end{equation}
\par\noindent\emph{Step 3.} Now we turn to estimate $\mathbb{E}_{S}[r_{k^{*},\ell^{*}}^{2}]$ and $\mathbb{E}_{S,S^{\prime}}[T^{2}]$. By (\ref{eq:rstar2}), $\mathbb{E}_{S}[r_{k^{*},\ell^{*}}^{2}]$ can be evaluated by
\begin{equation}\label{eq:Er2}
\begin{aligned}
\mathbb{E}_{S}[r_{k^{*},\ell^{*}}^{2}]&\leq A^{2}+\mathbb{E}_{S}\mathbb{E}_{(X,\bar{X})}[h(\tilde{\gamma}_{n},\tilde{u}_{n},X,\bar{X})|S]+14B_{\gamma}B_{u}(B_{\gamma}+B_{u})\varepsilon \\
&\leq A^{2}+R(\tilde{\gamma}_{n},\tilde{u}_{n})+14B_{\gamma}B_{u}(B_{\gamma}+B_{u})\varepsilon.
\end{aligned}
\end{equation}
In order to estimate $\mathbb{E}_{S,S^{\prime}}[T^{2}]$, we bound the tail probability $\mathbb{P}\{T>t\}$ by Bernstein inequality. Observe that $\mathbb{E}[\varphi_{k,\ell}(X,\bar{X},X^{\prime},\bar{X}^{\prime})]=0$, $|\varphi_{k,\ell}(X,\bar{X},X^{\prime},\bar{X}^{\prime})|\leq 12B_{\gamma}^{2}B_{u}^{2}+2(B_{f}^{2})$, and
\begin{align*}
\sigma_{\varphi}^{2}&=\operatorname{Var}(\varphi_{k,\ell}(X,\bar{X},X^{\prime},\bar{X}^{\prime})=2\operatorname{Var}(h(\gamma_{k},u_{\ell},X,\bar{X}))\leq 2\mathbb{E}_{(X,\bar{X})}[h^{2}(\gamma_{k},u_{\ell},X,\bar{X})] \\
&\leq(12B_{\gamma}^{2}B_{u}^{2}+2B_{f}^{2})\mathbb{E}_{(X,\bar{X})}[h(\gamma_{k},u_{\ell},X,\bar{X})]\leq (12B_{\gamma}^{2}B_{u}^{2}+2B_{f}^{2})r_{k,\ell}^{2},
\end{align*}
where the second inequality is owing to $h(\gamma_{k},u_{\ell},X,\bar{X})\geq 0$. Then using Bernstein inequality gives
\begin{equation}\label{eq:Bernstein}
\begin{aligned}
&\mathbb{P}\Big\{\Big|\frac{\sum_{i=1}^{n}\varphi_{k,\ell}(X,\bar{X},X^{\prime},\bar{X}^{\prime})}{n}\Big|\geq t\Big\} \\
&\leq 2\exp\Big(-\frac{1}{4(6B_{\gamma}^{2}B_{u}^{2}+B_{f}^{2})}\cdot\frac{nt^{2}}{t/3+r_{k,\ell}^{2}}\Big).
\end{aligned}
\end{equation}
For ease of notations, we denote $B=4(6B_{\gamma}^{2}B_{u}^{2}+B_{f}^{2})$. Since $A\leq\min_{k,\ell}r_{k,\ell}$, we obtain by (\ref{eq:Bernstein})
\begin{align*}
\mathbb{P}\{T\geq t\}
&\leq\sum_{k=1}^{N_{\gamma}}\sum_{\ell=1}^{N_{u}}\mathbb{P}\Big\{\Big|\frac{\sum_{i=1}^{n}\varphi_{k,\ell}(Z_{i},Z_{i}^{\prime})}{nr_{k,\ell}}\Big|\geq t\Big\} \\
&\leq N_{\gamma}N_{u}\max_{k,\ell}\mathbb{P}\Big\{\Big|\frac{\sum_{i=1}^{n}\varphi_{k,\ell}(Z_{i},Z_{i}^{\prime})}{n}\Big|\geq tr_{k,\ell}\Big\} \\
&\leq 2N_{\gamma}N_{u}\exp\Big(-\frac{1}{B}\cdot\frac{nt^{2}}{t/3A+1}\Big)\leq 2N_{\gamma}N_{u}\exp\Big(-\frac{3Ant}{2B}\Big),
\end{align*}
where the last inequality holds for $t\geq 3A$. Hence for $b\geq 3A$, it follows that
\begin{align*}
\mathbb{E}_{S,S^{\prime}}[T^{2}]&=\int_{0}^{\infty}\mathbb{P}\{T^{2}\geq u\}du=\int_{0}^{\infty}\mathbb{P}\{T^{2}\geq\sqrt{u}\}du \\
&\leq b^{2}+2N_{\gamma}N_{u}\int_{b^{2}}^{\infty}\exp\Big(-\frac{3An\sqrt{u}}{2B}\Big)du \\
&\leq b^{2}+4N_{\gamma}N_{u}\Big(\frac{2B}{3An}\Big)^{2}\Big(1+\frac{3Anb}{2B}\Big)\exp\Big(-\frac{3Anb}{2B}\Big).
\end{align*}
Assume $5\leq\log N_{\gamma}N_{u}\leq n$, setting $b=\frac{2B}{3An}\log(N_{\gamma}N_{u})$ gives
\begin{equation}\label{eq:ET2}
\mathbb{E}_{S,S^{\prime}}[T^{2}]\leq\Big(\frac{2B}{3An}\Big)^{2}\big(\log^{2}(N_{\gamma}N_{u})+\log(N_{\gamma}N_{u})+4\big)\leq2\Big(\frac{2B}{3A}\Big)^{2}\Big(\frac{\log(N_{\gamma}N_{u})}{n}\Big)^{2}.
\end{equation}
\par\noindent\emph{Step 4.} Combining (\ref{eq:R:r2:T2})(\ref{eq:Er2})(\ref{eq:ET2}) yields
\begin{equation*}
\begin{aligned}
&|R(\tilde{\gamma}_{n},\tilde{u}_{n})-R_{n}(\tilde{\gamma}_{n},\tilde{u}_{n})| \\
&\leq\frac{1}{2}R(\tilde{\gamma}_{n},\tilde{u}_{n})+\frac{1}{2}A^{2}+\Big(\frac{2B}{3A}\Big)^{2}\Big(\frac{\log(N_{\gamma}N_{u})}{n}\Big)^{2}+35B_{\gamma}B_{u}(B_{\gamma}+B_{u})\varepsilon.
\end{aligned}
\end{equation*}
Setting $A=\sqrt{\frac{2B\log(N_{\gamma}N_{u})}{9n}}$ gives the result.
\end{proof}
In order to estimate the expected empirical risk, we first introduce the following lemma.
\begin{lemma}\label{lemma:sumgaussian}
Let $\eta_{j}\sim\subG(0,\delta^{2})$ for $j=1,\ldots,N$, then
\begin{equation*}
\mathbb{E}\Big[\max_{1\leq j\leq N}\eta_{j}^{2}\Big]\leq 4\varepsilon^{2}(1+\log N).
\end{equation*}
\begin{proof}
For any $0<t<(2\delta^{2})^{-1}$, we can derive
\begin{align*}
\exp\Big(t\mathbb{E}\Big[\max_{1\leq j\leq N}\eta_{j}^{2}\Big]\Big)&\leq\mathbb{E}\Big[\max_{1\leq j\leq N}\exp(t\eta_{j}^{2})\Big]\leq N\mathbb{E}[\exp(t\eta_{1}^{2})] \\
&\leq\frac{N}{\sqrt{2\pi\varepsilon^{2}}}\int_{\mathbb{R}}\exp(tx^{2})\exp\Big(-\frac{x^{2}}{2\delta^{2}}\Big)dx \\
&=\frac{N}{\sqrt{1-2\delta^{2}t}}.
\end{align*}
Therefore
\begin{equation*}
\mathbb{E}\Big[\max_{1\leq j\leq N}\eta_{j}^{2}\Big]\leq\frac{1}{t}\log\frac{N}{\sqrt{1-2\delta^{2}t}}.
\end{equation*}
Setting $t=(4\delta^{2})^{-1}$ completes the proof.
\end{proof}
\end{lemma}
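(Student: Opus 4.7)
The plan is to apply the classical Chernoff / exponential moment method. The crucial input is that for a $\subG(0,\delta^{2})$ random variable $\eta$, the moment generating function of $\eta^{2}$ is finite on $(0,(2\delta^{2})^{-1})$; explicitly $\mathbb{E}[\exp(t\eta^{2})]\leq (1-2\delta^{2}t)^{-1/2}$, which follows by integrating the sub-Gaussian tail bound (or, for Gaussian $\eta$, by completing the square in the density).

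First I would use Jensen's inequality applied to the convex map $x\mapsto\exp(tx)$ for a free parameter $t>0$ to push the expectation inside:
\begin{equation*}
\exp\!\Bigl(t\,\mathbb{E}\bigl[\max_{1\leq j\leq N}\eta_{j}^{2}\bigr]\Bigr)\leq \mathbb{E}\!\Bigl[\exp\!\bigl(t\max_{j}\eta_{j}^{2}\bigr)\Bigr]=\mathbb{E}\!\Bigl[\max_{j}\exp(t\eta_{j}^{2})\Bigr].
\end{equation*}
Next I would dominate the maximum of the nonnegative quantities $\exp(t\eta_{j}^{2})$ by their sum, apply linearity, and plug in the MGF bound above:
\begin{equation*}
\mathbb{E}\!\Bigl[\max_{j}\exp(t\eta_{j}^{2})\Bigr]\leq \sum_{j=1}^{N}\mathbb{E}[\exp(t\eta_{j}^{2})]\leq \frac{N}{\sqrt{1-2\delta^{2}t}}.
\end{equation*}
Taking logarithms and dividing by $t$ gives, for every admissible $t$,
\begin{equation*}
\mathbb{E}\bigl[\max_{j}\eta_{j}^{2}\bigr]\leq \frac{1}{t}\log\!\frac{N}{\sqrt{1-2\delta^{2}t}}.
\end{equation*}

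The last step is an optimization over $t\in(0,(2\delta^{2})^{-1})$. Rather than differentiating, I would make the convenient choice $t=(4\delta^{2})^{-1}$, which renders $1-2\delta^{2}t=1/2$ and yields
\begin{equation*}
\mathbb{E}\bigl[\max_{j}\eta_{j}^{2}\bigr]\leq 4\delta^{2}\bigl(\log N+\tfrac{1}{2}\log 2\bigr)\leq 4\delta^{2}(1+\log N),
\end{equation*}
which matches the stated bound (the $\varepsilon$ appearing in the conclusion appears to stand for $\delta$).

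I do not anticipate a substantive obstacle: Jensen, the $\max\leq\sum$ bound, and the quadratic MGF estimate are all standard, and the prescribed choice of $t$ loses only an absolute constant relative to the sharp optimum. The only mild point of care is making sure the MGF estimate $\mathbb{E}[\exp(t\eta^{2})]\leq(1-2\delta^{2}t)^{-1/2}$ is justified for genuine sub-Gaussian variables rather than merely Gaussian ones; this follows immediately from the defining inequality $\mathbb{E}[\exp(s\eta)]\leq\exp(s^{2}\delta^{2}/2)$ by writing $\exp(t\eta^{2})$ as a Gaussian integral in $s$ and swapping the order of integration.
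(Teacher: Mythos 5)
Your proposal is correct and follows essentially the same route as the paper's proof: Jensen's inequality, bounding the maximum by the sum, the MGF estimate $\mathbb{E}[\exp(t\eta^{2})]\leq(1-2\delta^{2}t)^{-1/2}$, and the choice $t=(4\delta^{2})^{-1}$. If anything, you are slightly more careful than the paper, which silently substitutes the Gaussian density for a general sub-Gaussian law (and writes $\varepsilon$ where $\delta$ is meant); your remark on justifying the MGF bound from the defining sub-Gaussian inequality addresses exactly that gap.
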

\begin{lemma}\label{lemma:errdec:2}
Suppose \cref{ass:gammau:bound,ass:fg:bound} hold. 
\begin{equation*}
R_{n}(\tilde{\gamma}_{n},\tilde{u}_{n})\lesssim\inf_{(\gamma,u)\in\calF_{\gamma}\times\calF_{u}}R(\gamma,u)+\Delta_{n}(\tilde{\gamma}_{n},\tilde{u}_{n})+\frac{\delta^{2}\log(N_{\gamma}N_{u})}{n}+(\delta+B_{\gamma}B_{u})(B_{\gamma}+B_{u})\varepsilon.
\end{equation*}
\end{lemma}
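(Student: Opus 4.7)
The plan is to expand $L_n$ using the noise model, invoke the ERM property at $(\hat\gamma_n,\hat u_n)$ to compare $(\tilde\gamma_n,\tilde u_n)$ with any deterministic element of $\calF_\gamma\times\calF_u$, and control the resulting noise cross term by a covering-based maximal inequality sharpened by an AM--GM trick.

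Setting $F(\gamma,u):=\gamma|\nabla u|-\gamma^\dagger|\nabla u^\dagger|$ and substituting $Y_i=\gamma^\dagger|\nabla u^\dagger|(X_i)+\xi_i$, one obtains
\[
 L_n(\gamma,u) = \tfrac{1}{n}\sum_i F(\gamma,u)(X_i)^2 + G_n(\gamma,u) - \tfrac{2}{n}\sum_i \xi_i F(\gamma,u)(X_i) + \tfrac{1}{n}\sum_i \xi_i^2.
\]
For any deterministic $(\gamma,u)\in\calF_\gamma\times\calF_u$, independence of $\xi_i$ from $X_i$ and $\bbE\xi_i=0$ give $\bbE_S L_n(\gamma,u)=R(\gamma,u)+\bbE[\xi_1^2]$. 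Combining this with $\bbE_S L_n(\hat\gamma_n,\hat u_n)\leq \bbE_S L_n(\gamma,u)$ and the definition of $\Delta_n$ yields
\[
 R_n(\tilde\gamma_n,\tilde u_n) \leq \inf_{(\gamma,u)\in\calF_\gamma\times\calF_u} R(\gamma,u) + \Delta_n(\tilde\gamma_n,\tilde u_n) + 2\Psi,
\]
with $\Psi:=\bbE_S\bigl[\tfrac{1}{n}\sum_i \xi_i F(\tilde\gamma_n,\tilde u_n)(X_i)\bigr]$. The remaining task is to bound $\Psi$ by $\tfrac{\delta^2\log(N_\gamma N_u)}{n}+(\delta+B_\gamma B_u)(B_\gamma+B_u)\varepsilon$.

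Let $(\gamma_{k^*},u_{\ell^*})$ be the element of an $\varepsilon$-net $\{\gamma_k\}\times\{u_\ell\}$ closest to $(\tilde\gamma_n,\tilde u_n)$. The Lipschitz estimate $\|F(\gamma,u)-F(\gamma',u')\|_\infty \leq B_u\|\gamma-\gamma'\|_{W^{1,\infty}(U)}+B_\gamma\|u-u'\|_{W^{2,\infty}(U)}$ shows the pointwise residual $|F(\tilde\gamma_n,\tilde u_n)-F(\gamma_{k^*},u_{\ell^*})|$ is at most $(B_\gamma+B_u)\varepsilon$, and the corresponding contribution to $\Psi$ is bounded by $(B_\gamma+B_u)\varepsilon\cdot\bbE\tfrac{1}{n}\sum_i|\xi_i|\lesssim \delta(B_\gamma+B_u)\varepsilon$ via sub-Gaussianity. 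For the net term, conditional on the $X_i$'s, the normalized variable $W_{k,\ell}:=\tfrac{1}{n}\sum_i \xi_i F(\gamma_k,u_\ell)(X_i)/\hat\sigma_{k,\ell}$ with $\hat\sigma_{k,\ell}^2:=\tfrac{1}{n}\sum_i F(\gamma_k,u_\ell)(X_i)^2$ is sub-Gaussian with parameter $\delta^2/n$, so Lemma~\ref{lemma:sumgaussian} gives $\bbE_S[\max_{k,\ell}W_{k,\ell}^2]\lesssim \delta^2(1+\log(N_\gamma N_u))/n$.

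The crux is a self-referential AM--GM step: $2\hat\sigma_{k^*,\ell^*}|W_{k^*,\ell^*}|\leq \lambda\hat\sigma_{k^*,\ell^*}^2+W_{k^*,\ell^*}^2/\lambda$. Using $\hat\sigma_{k^*,\ell^*}^2\leq 2\hat\sigma_{\tilde\gamma_n,\tilde u_n}^2+2(B_\gamma+B_u)^2\varepsilon^2$ together with $\bbE_S[\hat\sigma_{\tilde\gamma_n,\tilde u_n}^2]\leq R_n(\tilde\gamma_n,\tilde u_n)$ (since $G_n\geq 0$) leads to
\[
 \Psi \leq 2\lambda R_n(\tilde\gamma_n,\tilde u_n) + 2\lambda(B_\gamma+B_u)^2\varepsilon^2 + \tfrac{c\delta^2\log(N_\gamma N_u)}{\lambda n}+c\delta(B_\gamma+B_u)\varepsilon.
\]
Choosing $\lambda$ sufficiently small (e.g. $\lambda=1/8$) allows the $R_n(\tilde\gamma_n,\tilde u_n)$ term to be absorbed into the left-hand side of the overall inequality, and the elementary estimate $(B_\gamma+B_u)^2\varepsilon^2\leq 2B_\gamma B_u(B_\gamma+B_u)\varepsilon$ (valid since $B_\gamma,B_u\geq 1$ and $\varepsilon\leq 1$) merges the residual with the other Lipschitz term to yield the stated bound. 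The main obstacle is precisely this self-referential AM--GM argument: one must simultaneously extract the fast rate $\delta^2\log(N_\gamma N_u)/n$ (rather than the naive $\delta\sqrt{\log(N_\gamma N_u)/n}$ from Cauchy--Schwarz) and keep the prefactor of $R_n(\tilde\gamma_n,\tilde u_n)$ strictly below one so the inequality can be solved for $R_n$.
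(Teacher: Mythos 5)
Your proposal is correct and follows essentially the same route as the paper: expand $L_n$ under the noise model, use the ERM property plus $\Delta_n$ to compare against an arbitrary $(\gamma,u)$, approximate $(\tilde\gamma_n,\tilde u_n)$ by an $\varepsilon$-net element, control the net noise term with the self-normalized sub-Gaussian maximal inequality (Lemma~\ref{lemma:sumgaussian}), and absorb the resulting fraction of $R_n(\tilde\gamma_n,\tilde u_n)$ via the self-referential AM--GM step. The paper's version uses the explicit constant $\tfrac12$ where you use $\lambda$, but the argument is the same, including the key point you flag about extracting the fast rate $\delta^2\log(N_\gamma N_u)/n$.
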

\begin{proof}
\par We divide the proof into three steps.
\par\noindent\emph{Step 1.} For any fixed $(\gamma,u)\in\calF_{\gamma}\times\calF_{u}$, it is easy to show that
\begin{equation}\label{eq:deltan}
\mathbb{E}_{S}[L_{n}(\tilde{\gamma}_{n},\tilde{u}_{n})]\leq\mathbb{E}_{S}[L_{n}(\gamma,u)]+\Delta_{n}(\tilde{\gamma}_{n},\tilde{u}_{n}).
\end{equation}
By definition, we have
\begin{equation}\label{eq:Eln:tilde}
\begin{aligned}
&\mathbb{E}_{S}[L_{n}(\tilde{\gamma}_{n},\tilde{u}_{n})] =\mathbb{E}_{S}\Big[\frac{1}{n}\sum_{i=1}^{n}(Y_{i}-\tilde{\gamma}_{n}|\nabla\tilde{u}_{n}|(X_{i}))^{2}\Big]+\mathbb{E}_{S}\Big[\frac{1}{n}\sum_{i=1}^{n}g(\tilde{\gamma}_{n},\tilde{u}_{n},X_{i},\bar{X}_{i})\Big] \\
&=\mathbb{E}_{S}\Big[\frac{1}{n}\sum_{i=1}^{n}(\gamma^{\dagger}|\nabla u^{\dagger}|(X_{i})-\tilde{\gamma}_{n}|\nabla\tilde{u}_{n}|(X_{i})+\xi_{i})^{2}\Big]+\mathbb{E}_{S}\Big[\frac{1}{n}\sum_{i=1}^{n}g(\tilde{\gamma}_{n},\tilde{u}_{n},X_{i},\bar{X}_{i})\Big] \\
&=R_{n}(\tilde{\gamma}_{n},\tilde{u}_{n})+\delta^{2}-\mathbb{E}_{S}\Big[\frac{2}{n}\sum_{i=1}^{n}\xi_{i}\tilde{\gamma}_{n}|\nabla\tilde{u}_{n}|(X_{i})\Big],
\end{aligned}
\end{equation}
and
\begin{equation}\label{eq:Eln}
\begin{aligned}
&\mathbb{E}_{S}[L_{n}(\gamma,u)]=\mathbb{E}_{S}\Big[\frac{1}{n}\sum_{i=1}^{n}(Y_{i}-\gamma|\nabla u|(X_{i}))^{2}\Big]+\mathbb{E}_{S}\Big[\frac{1}{n}\sum_{i=1}^{n}g(\gamma,u,X_{i},\bar{X}_{i})\Big] \\
&=\mathbb{E}_{S}\Big[\frac{1}{n}\sum_{i=1}^{n}(\gamma^{\dagger}|\nabla u^{\dagger}|(X_{i})-\gamma|\nabla u|(X_{i})+\xi_{i})^{2}\Big]+\mathbb{E}_{S}\Big[\frac{1}{n}\sum_{i=1}^{n}g(\gamma,u,X_{i},\bar{X}_{i})\Big] \\
&=\|\gamma^{\dagger}|\nabla u^{\dagger}|-\gamma|\nabla u|\|_{L^{2}(U)}+\delta^{2}+G(\gamma,u).
\end{aligned}
\end{equation}
Combining (\ref{eq:deltan})(\ref{eq:Eln:tilde})(\ref{eq:Eln}) gives
\begin{equation}\label{eq:proof:Rn:step1}
\begin{aligned}
&R_{n}(\tilde{\gamma}_{n},\tilde{u}_{n}) \\
&\leq\|\gamma^{\dagger}|\nabla u^{\dagger}|-\gamma|\nabla u|\|_{L^{2}(U)}^{2}+G(\gamma,u)+\Big|\mathbb{E}_{S}\Big[\frac{2}{n}\sum_{i=1}^{n}\xi_{i}\tilde{\gamma}_{n}|\nabla\tilde{u}_{n}|(X_{i})\Big]\Big|+\Delta_{n}(\tilde{\gamma}_{n},\tilde{u}_{n}).
\end{aligned}
\end{equation}
\par\noindent\emph{Step 2.} Now we show
\begin{equation}\label{eq:proof:Rn:step2}
\begin{aligned}
&\Big|\mathbb{E}_{S}\Big[\frac{2}{n}\sum_{i=1}^{n}\xi_{i}\tilde{\gamma}_{n}|\nabla\tilde{u}_{n}|(X_{i})\Big]\Big| \\
&\leq\frac{1}{2}R_{n}(\tilde{\gamma}_{n},\tilde{u}_{n})+\frac{8\delta^{2}(1+\log(N_{\gamma}N_{u}))}{n}+2(\delta+B_{\gamma}B_{u})(B_{\gamma}+B_{u})\varepsilon.
\end{aligned}
\end{equation}
In fact, for estimators $(\tilde{\gamma}_{n},\tilde{u}_{n})\in\calF_{\gamma}\times\calF_{u}$, there exists $k^{\prime}$ and $\ell^{\prime}$, such that $\|\gamma_{k^{\prime}}-\tilde{\gamma}_{n}\|_{W^{1,\infty}(U)}\leq\varepsilon$ and $\|u_{\ell^{\prime}}-\tilde{u}_{n}\|_{W^{2,\infty}(U)}\leq\varepsilon$. Then we have
\begin{align}
&\Big|\mathbb{E}_{S}\Big[\frac{2}{n}\sum_{i=1}^{n}\xi_{i}\tilde{\gamma}_{n}|\nabla\tilde{u}_{n}|(X_{i})\Big]\Big|\nonumber \\
&\leq\Big|\mathbb{E}_{S}\Big[\frac{2}{n}\sum_{i=1}^{n}\xi_{i}(\tilde{\gamma}_{n}|\nabla\tilde{u}_{n}|(X_{i})-\gamma_{k^{\prime}}|\nabla u_{\ell^{\prime}}|(X_{i}))\Big]\Big|\nonumber \\
&\quad+\Big|\mathbb{E}_{S}\Big[\frac{2}{n}\sum_{i=1}^{n}\xi_{i}(\gamma_{k^{\prime}}|\nabla u_{\ell^{\prime}}|(X_{i})-\gamma^{\dagger}|\nabla u^{\dagger}|(X_{i}))\Big]\Big|\nonumber \\
&\leq\frac{2(B_{\gamma}+B_{u})\delta}{n}\mathbb{E}_{S}\Big[\sum_{i=1}^{n}|\xi_{i}|\Big]+\Big|\mathbb{E}_{S}\Big[\frac{2}{n}\sum_{i=1}^{n}\xi_{i}(\gamma_{k^{\prime}}|\nabla u_{\ell^{\prime}}|(X_{i})-\gamma^{\dagger}|\nabla u^{\dagger}|(X_{i}))\Big]\Big|\nonumber \\
&\leq2\sqrt{\frac{2}{\pi}}(B_{\gamma}+B_{u})\delta\varepsilon+\Big|\mathbb{E}_{S}\Big[\frac{2}{n}\sum_{i=1}^{n}\xi_{i}(\gamma_{k^{\prime}}|\nabla u_{\ell^{\prime}}|(X_{i})-\gamma^{\dagger}|\nabla u^{\dagger}|(X_{i}))\Big]\Big|. \label{eq:xi:gammau}
\end{align}
Let $\eta_{k,\ell}$ be random variables defined as
\begin{equation}
\eta_{k,\ell}=\frac{\sum_{i=1}^{n}\xi_{i}(\gamma_{k}|\nabla u_{\ell}|(X_{i})-\gamma^{\dagger}|\nabla u^{\dagger}|(X_{i}))}{(\sum_{i=1}^{n}(\gamma_{k}|\nabla u_{\ell}|(X_{i})-\gamma^{\dagger}|\nabla u^{\dagger}|(X_{i}))^{2})^{1/2}},
\end{equation}
then each $\eta_{k,\ell}$ follows sub-Gaussian distribution $\subG(\delta^{2})$ conditionally on $S=\{(X_{i},Y_{i})\}_{i=1}^{n}\cup\{\bar{X}_{i}\}_{i=1}^{n}$.
By using Cauchy-Schwarz and AM-GM inequality, we find
\begin{align}
&\Big|\mathbb{E}_{S}\Big[\frac{2}{n}\sum_{i=1}^{n}\xi_{i}(\gamma_{k^{\prime}}|\nabla u_{\ell^{\prime}}|(X_{i})-\gamma^{\dagger}|\nabla u^{\dagger}|(X_{i}))\Big]\Big|\nonumber \\
&\leq\frac{2}{n}\Big|\mathbb{E}_{S}\Big[\Big(\sum_{i=1}^{n}(\gamma_{k^{\prime}}|\nabla u_{\ell^{\prime}}|(X_{i})-\gamma^{\dagger}|\nabla u^{\dagger}|(X_{i}))^{2}\Big)^{1/2}\eta_{k^{\prime},\ell^{\prime}}\Big]\Big| \nonumber \\
&\leq\frac{2}{n}\mathbb{E}_{S}\Big[\sum_{i=1}^{n}(\gamma_{k^{\prime}}|\nabla u_{\ell^{\prime}}|(X_{i})-\gamma^{\dagger}|\nabla u^{\dagger}|(X_{i}))^{2}\Big]^{1/2}\mathbb{E}_{S}\Big[\eta_{k^{\prime},\ell^{\prime}}^{2}\Big]^{1/2} \nonumber \\
&\leq\frac{1}{2}\mathbb{E}_{S}\Big[\frac{1}{n}\sum_{i=1}^{n}(\gamma_{k^{\prime}}|\nabla u_{\ell^{\prime}}|(X_{i})-\gamma^{\dagger}|\nabla u^{\dagger}|(X_{i}))^{2}\Big]+\frac{2}{n}\mathbb{E}_{S}\Big[\eta_{k^{\prime},\ell^{\prime}}^{2}\Big] \nonumber\\
&\leq\frac{1}{2}\mathbb{E}_{S}\Big[\frac{1}{n}\sum_{i=1}^{n}(\tilde{\gamma}_{n}|\nabla \tilde{u}_{n}|(X_{i})-\gamma^{\dagger}|\nabla u^{\dagger}|(X_{i}))^{2}\Big]+2B_{\gamma}B_{u}(B_{\gamma}+B_{u})\delta+\frac{2}{n}\mathbb{E}_{S}\Big[\max_{k,\ell}\eta_{k,\ell}^{2}\Big] \nonumber\\
&\leq\frac{1}{2}R_{n}(\tilde{\gamma}_{n},\tilde{u}_{n})+\frac{8\varepsilon^{2}(1+\log N_{\gamma}N_{u})}{n}+2B_{\gamma}B_{u}(B_{\gamma}+B_{u})\delta, \label{eq:xisub:Rn}
\end{align}
where the last inequality is owing to $G_{n}(\tilde{\gamma}_{n},\tilde{u}_{n})\geq 0$ 
and \cref{lemma:sumgaussian}. Combining (\ref{eq:xi:gammau}) and (\ref{eq:xisub:Rn}), we obtain (\ref{eq:proof:Rn:step2}).
\par\noindent\emph{Step 3.} Using (\ref{eq:proof:Rn:step1}) and (\ref{eq:proof:Rn:step2}), we have
\begin{equation}
\begin{aligned}
R_{n}(\tilde{\gamma}_{n},\tilde{u}_{n})&\leq 2(\|\gamma|\nabla u|-\gamma^{\dagger}|\nabla u^{\dagger}|\|_{L^{2}(U)}^{2}+G(\gamma,u))+\frac{16\varepsilon^{2}(1+\log(N_{\gamma}N_{u}))}{n} \\
&\quad+4\sqrt{\frac{2}{\pi}}(B_{\gamma}+B_{u})\varepsilon\delta+4B_{\gamma}B_{u}(B_{\gamma}+B_{u})\varepsilon+\Delta_{n}(\tilde{\gamma}_{n},\tilde{u}_{n}),
\end{aligned}
\end{equation}
which completes the proof.
\end{proof}
\begin{proof}[Proof of \cref{lemma:err:dec}]
Combining \cref{lemma:errdec:1,lemma:errdec:2} yields the conclusion.
\end{proof}

\subsection{The proof of the approximation error bound}\label{sec:appendix:proof:approx}
\begin{lemma}[Lipschitz continuity of forward operator]\label{lemma:forward:lip}
Suppose $f\in H^{3/2}(\partial U)$. Given a function $\gamma$, we denote $u(\gamma)$ the weak solution of 
\begin{equation*}
\nabla\cdot(\gamma\nabla u)=0,~\text{in}~U,\quad Tu=f,~\text{on}~\partial U.
\end{equation*}
Then the mapping $\gamma\mapsto u(\gamma)$ is Lipschitz continuous from $\calK\cap C^{1}(\bU)$ to $H^{2}(U)$, i.e., for any $\gamma,\gamma+\delta\gamma\in\calK\cap C^{1}(\bU)$, there holds
\begin{equation*}
\|u(\gamma+\delta\gamma)-u(\gamma)\|_{H^{2}(U)}\leq C_{\lip}B_{f}\|\delta\gamma\|_{C^{1}(\bU)},
\end{equation*}
where $C_{\lip}$ is a constant depending on $\gamma_{0}$, $\gamma_{1}$ and $U$.
\end{lemma}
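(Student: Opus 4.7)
The plan is to derive the PDE satisfied by the difference $v := u(\gamma+\delta\gamma) - u(\gamma)$ and then invoke standard $H^{2}$-elliptic regularity for divergence-form equations on a $C^{\infty}$-domain. Since $u(\gamma)$ and $u(\gamma+\delta\gamma)$ share the same Dirichlet trace $f$, subtracting the two weak formulations and regrouping the leading term around the fixed coefficient $\gamma$ yields
\begin{equation*}
\nabla\cdot(\gamma\nabla v) = -\nabla\cdot(\delta\gamma\,\nabla u(\gamma+\delta\gamma)) \text{ in } U, \qquad Tv = 0 \text{ on } \partial U.
\end{equation*}

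Next, I would expand the right-hand side. Because $\delta\gamma\in C^{1}(\bU)$ and $u(\gamma+\delta\gamma)\in H^{2}(U)$ by elliptic regularity applied to the forward problem, the product rule gives
\begin{equation*}
\nabla\cdot(\delta\gamma\,\nabla u(\gamma+\delta\gamma)) = \nabla\delta\gamma\cdot\nabla u(\gamma+\delta\gamma) + \delta\gamma\,\Delta u(\gamma+\delta\gamma),
\end{equation*}
so its $L^{2}$-norm is controlled by $\|\delta\gamma\|_{C^{1}(\bU)}\,\|u(\gamma+\delta\gamma)\|_{H^{2}(U)}$. An a priori $H^{2}$-estimate for the forward problem with coefficient $\gamma+\delta\gamma\in\calK$, Dirichlet datum $f\in H^{3/2}(\partial U)$, and smooth boundary $\partial U$, then yields $\|u(\gamma+\delta\gamma)\|_{H^{2}(U)}\lesssim B_{f}$, so that the source of the equation for $v$ is bounded in $L^{2}(U)$ by $C B_{f}\|\delta\gamma\|_{C^{1}(\bU)}$.

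Finally, I would apply $H^{2}$-regularity to the homogeneous Dirichlet problem for $v$ itself. Because $\gamma\in\calK\cap C^{1}(\bU)$ is uniformly elliptic with ellipticity constants $\gamma_{0},\gamma_{1}$ and the domain has a $C^{\infty}$-boundary, the standard result for divergence-form equations gives $\|v\|_{H^{2}(U)}\lesssim \|\nabla\cdot(\gamma\nabla v)\|_{L^{2}(U)}$. Chaining the estimates produces $\|v\|_{H^{2}(U)}\le C_{\lip}B_{f}\|\delta\gamma\|_{C^{1}(\bU)}$.

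The main obstacle is ensuring that the constants in the two $H^{2}$-regularity steps depend only on $\gamma_{0},\gamma_{1},U$ as claimed, since the generic $H^{2}$-constant for divergence-form equations with Lipschitz coefficient also depends on the $C^{0,1}$-norm of the coefficient. I would address this either by refining the statement so that $C_{\lip}$ is allowed to depend on an a priori $C^{1}$-bound for the conductivities in play (which is natural given \cref{ass:gammau:bound}, where all admissible $\gamma$ satisfy $\|\gamma\|_{W^{1,\infty}(U)}\le B_{\gamma}$), or by appealing to a duality/flattening argument that exploits only the $L^{\infty}$-ellipticity bounds $\gamma_{0}\le\gamma\le\gamma_{1}$ together with the smoothness of $\partial U$ to absorb the $C^{1}$-dependence into the hidden constant.
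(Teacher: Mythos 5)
Your proposal is correct and follows essentially the same route as the paper: write the homogeneous-Dirichlet equation for the difference $\delta u$, bound the source $\nabla\cdot(\delta\gamma\,\nabla u(\gamma+\delta\gamma))$ in $L^{2}(U)$ by $\|\delta\gamma\|_{C^{1}(\bU)}\|u(\gamma+\delta\gamma)\|_{H^{2}(U)}$, control the latter by $B_{f}$ via a priori regularity for the forward problem, and then apply $H^{2}$-elliptic regularity to $\delta u$. Your caveat about the constants is well taken: the paper's $c_{1},c_{2}$ do in general depend on the modulus of continuity (here the $C^{1}$-norm) of the coefficient and not only on $\gamma_{0},\gamma_{1},U$ as stated, so allowing $C_{\lip}$ to depend additionally on the uniform bound $B_{\gamma}$ from \cref{ass:gammau:bound} is the honest reading of the lemma.
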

\begin{proof}
Let $\delta u=u(\gamma+\delta\gamma)-u(\gamma)$. Then $\delta u$ is the unique solution of  
\begin{equation*}
-\nabla\cdot(\gamma\nabla\delta u)=\nabla\cdot(\delta\gamma\nabla u(\gamma+\delta\gamma)),~\text{in}~U,\quad T\delta u=0,~\text{on}~\partial U.
\end{equation*}
It is easy to verify that 
\begin{align*}
\|\delta u\|_{H^{2}(U)}&\leq c_{1}\|\nabla\cdot(\delta\gamma\nabla u(\gamma+\delta\gamma))\|_{L^{2}(U)} \\
&\leq c_{1}\|u(\gamma+\delta\gamma))\|_{H^{2}(U)}\|\delta\gamma\|_{C^{1}(\bU)} \\
&\leq c_{1}c_{2}\|f\|_{H^{3/2}(\partial U)}\|\delta\gamma\|_{C^{1}(\bU)},
\end{align*}
where $c_{1}$, $c_{2}$ are constants depending on $\gamma_{0}$, $\gamma_{1}$ and $U$. 
\end{proof}

\begin{proof}[Proof of \cref{lemma:app:1}]
For each fixed $(\gamma,u)\in\calF_{\gamma}\times\calF_{u}$, recall the definition of the excess risk
\begin{equation}\label{eq:excess}
R(\gamma,u)=\|\gamma|\nabla u|-\gamma^{\dagger}|\nabla u^{\dagger}|\|_{L^{2}(U)}^{2}+G(\gamma,u).
\end{equation}
We first consider the first term in \cref{eq:excess}. By triangular inequality and AM-GM inequality, we have 
\begin{align}
\|\gamma|\nabla u|-\gamma^{\dagger}|\nabla u^{\dagger}|\|_{L^{2}(U)}^{2}
&\leq 2\|\gamma|\nabla u|-\gamma^{\dagger}|\nabla u|\|_{L^{2}(U)}^{2}+2\|\gamma^{\dagger}|\nabla u|-\gamma^{\dagger}|\nabla u^{\dagger}|\|_{L^{2}(U)}^{2} \nonumber \\
&\leq2B_{u}^{2}\|\gamma-\gamma^{\dagger}\|_{L^{2}(U)}^{2}+2B_{\gamma}^{2}\|u-u^{\dagger}\|_{H^{1}(U)}^{2}.\label{eq:proof:app:1}
\end{align}
We next investigate the second term in \cref{eq:excess}. Denote by $u_{\gamma}$ the solution of 
\begin{equation*}
\nabla\cdot(\gamma\nabla u)=0,~\text{in}~U,\quad Tu=f,~\text{on}~\partial U.
\end{equation*}
Then it follows that
\begin{align*}
G(\gamma,u)&=\|\nabla\cdot(\gamma\nabla(u-u_{\gamma}))\|_{L^{2}(U)}^{2}+\|Tu-Tu_{\gamma}\|_{L^{2}(\partial U)}^{2} \\
&\leq B_{\gamma}^{2}\|u-u_{\gamma}\|_{H^{2}(U)}^{2}+C_{\tr}^{2}\|u-u_{\gamma}\|_{H^{1}(U)}^{2}\leq(B_{\gamma}^{2}+C_{\tr}^{2})\|u-u_{\gamma}\|_{H^{2}(U)}^{2},
\end{align*}
where the first inequality is due to the trace theorem, and the constant $C_{\tr}$ only depends on $U$. By \cref{lemma:forward:lip}, it holds
\begin{equation*}
\|u_{\gamma}-u^{\dagger}\|_{H^{2}(U)}\leq C_{\lip}B_{f}\|\gamma-\gamma^{\dagger}\|_{C^{1}(\bU)},
\end{equation*}
and consequently,
\begin{equation}\label{eq:proof:app:2}
G(\gamma,u)\leq2(B_{\gamma}^{2}+C_{\tr}^{2})C_{\lip}^{2}B_{f}^{2}\|\gamma-\gamma^{\dagger}\|_{C^{1}(\bU)}^{2}+2(B_{\gamma}^{2}+C_{\tr}^{2})\|u-u^{\dagger}\|_{H^{2}(U)}^{2}.
\end{equation}
Combining \cref{eq:proof:app:1,eq:proof:app:2} completes the proof.
\end{proof}
\begin{lemma}\label{lemma:app:nn}
Let $s\in\mathbb{N}_{+}$ and $\mu>0$. Suppose $\varrho$ is an exponential PU-admissible activation function.
\begin{enumerate}[(i)]
\item For each $\gamma^{\dagger}\in C^{s+1}(\bU)$, there exists a $\varrho$-network $\gamma\in\calN_{\varrho}(\calD_{\gamma},\calS_{\gamma},\calB_{\gamma})$ such that 
\begin{equation*}
\|\gamma-\gamma^{\dagger}\|_{C^{1}(\bU)}\leq\|\gamma^{\dagger}\|_{C^{s+1}(\bU)}\cdot\calS_{\gamma}^{-\frac{s-\mu}{d}},\quad\text{with}~\calD_{\gamma}=C\log(d+s)~\text{and}~\calB_{\gamma}=C\calS_{\gamma}^{\frac{2s}{d}+7}.
\end{equation*}
\item For any $u^{\dagger}\in H^{s+2}(U)$, there exists a $\varrho$-network $u\in\calN_{\varrho}(\calD_{u},\calS_{u},\calB_{u})$ such that 
\begin{equation*}
\|u-u^{*}\|_{H^{2}(U)}\leq\|u^{\dagger}\|_{H^{s+2}(U)}\cdot\calS_{u}^{-\frac{s+1-\mu}{d}},\quad\text{with}~\calD_{u}=C\log(d+s+1)~\text{and}~\calB_{u}=C\calS_{u}^{\frac{2s+2}{d}+7}.
\end{equation*}
\end{enumerate}
Here $C$ is a constant depending on $d$, $s$, $\mu$ and $U$.
\end{lemma}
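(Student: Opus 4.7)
The plan is to reduce this to the deep-network approximation theorem of Gühring and Raslan \cite{Guhring2021Approximation}, whose Proposition 4.8 provides simultaneous approximation of smooth functions together with their derivatives by sparsely connected $\varrho$-networks when $\varrho$ is an exponential PU-admissible activation. Both parts follow the same template: (a) extend the target function from $\bU$ (or $U$) to the enclosing cube $(0,1)^d$ with control of the smoothness norm, and then (b) invoke the cited proposition with the appropriate source-smoothness and target-Sobolev orders to read off $\calD$, $\calS$, $\calB$ and the convergence rate.

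For part (i), since $\bU \subset (0,1)^d$ is a $C^\infty$-domain, a Stein extension operator maps $C^{s+1}(\bU)$ continuously into $C^{s+1}(\mathbb{R}^d)$. Applied to $\gamma^{\dagger}$, this yields an extension $\tilde\gamma^{\dagger}$ whose restriction to $\bU$ agrees with $\gamma^{\dagger}$ and which satisfies $\|\tilde\gamma^{\dagger}\|_{C^{s+1}((0,1)^d)} \lesssim \|\gamma^{\dagger}\|_{C^{s+1}(\bU)}$. Proposition 4.8 of \cite{Guhring2021Approximation}, specialized to source smoothness $s+1$ and target norm $W^{1,\infty}$, then produces a $\varrho$-network $\gamma$ with depth $\calD_{\gamma}=C\log(d+s)$, at most $\calS_{\gamma}$ nonzero weights, and weight magnitudes bounded by $C\calS_{\gamma}^{\frac{2s}{d}+7}$, satisfying
\[
\|\gamma-\tilde\gamma^{\dagger}\|_{W^{1,\infty}((0,1)^d)} \leq \|\tilde\gamma^{\dagger}\|_{C^{s+1}((0,1)^d)} \cdot \calS_{\gamma}^{-\frac{s-\mu}{d}},
\]
where the $\mu>0$ absorbs polylogarithmic factors in the cited bound. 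Restricting to $\bU$ and absorbing the extension constant into the $C^{s+1}$-norm completes (i).

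Part (ii) is structurally identical with the Sobolev analogue. Because $\partial U$ is $C^\infty$, a Sobolev extension produces $\tilde u^{\dagger} \in H^{s+2}(\mathbb{R}^d)$ agreeing with $u^{\dagger}$ on $U$ with comparable norm. Proposition 4.8 of \cite{Guhring2021Approximation}, now applied with source smoothness $s+2$ and target $H^2 = W^{2,2}$, furnishes $u \in \calN_{\varrho}(\calD_u,\calS_u,\calB_u)$ with $\calD_u=C\log(d+s+1)$ and $\calB_u = C\calS_u^{\frac{2s+2}{d}+7}$, and the bound $\|u-u^{\dagger}\|_{H^2(U)} \leq \|u^{\dagger}\|_{H^{s+2}(U)} \calS_u^{-\frac{s+1-\mu}{d}}$ follows by specializing the $W^{n,p}$-to-$W^{k,p}$ rate with $n=s+2$, $k=2$, $p=2$.

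The only nontrivial step is the extension: Proposition 4.8 is formulated on the cube for globally defined smooth functions, whereas our data live only on $U$. The Stein and Sobolev extensions dispose of this at the cost of a universal constant, and the smoothness of $\partial U$ is precisely what makes both extensions available and norm-controlled. Everything else is bookkeeping: matching source smoothness ($s+1$ versus $s+2$), target order ($1$ versus $2$), and the exponents of $\calS$ in the weight-magnitude bound to the formulation in the cited proposition.
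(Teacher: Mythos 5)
Your proposal is correct and takes essentially the same route as the paper, whose entire proof is a one-line citation of Proposition 4.8 in \cite{Guhring2021Approximation}. The only difference is that you additionally spell out the Stein/Sobolev extension from $U$ to the cube needed to apply that proposition, a step the paper leaves implicit; this is a reasonable and harmless elaboration.
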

\begin{proof}
A direct conclusion of Proposition 4.8 in \cite{Guhring2021Approximation}.
\end{proof}

\begin{proof}[Proof of \cref{lemma:approx}]
Combining \cref{lemma:app:1,lemma:app:nn} yields the desired result.
\end{proof}

\subsection{The proof of the statistical error bound}\label{sec:appendix:proof:staerr}
\begin{lemma}\label{lemma:bound:nn}
Let $m\in\bbN$ and $U\subseteq[0,1]^{d}$ be a domain.
For each neural network $\phi\in\calN_{\varrho}(\calW,\calS,\calB)$, we have $\phi\in C^{m}(\bU)$ and 
\begin{align*}
\|\phi\|_{C^{0}(\bU)}&=\calO(N_{\max}\calB),\quad\|\phi\|_{C^{1}(\bU)}=\calO(N_{\max}^{\calD}\calB^{\calD}), \\
\|\phi\|_{C^{2}(\bU)}&=\calO(N_{\max}^{\calD(2\calD+1)}\calB^{\calD(\calD+1)}), \\
\|\phi\|_{C^{3}(\bU)}&=\calO(N_{\max}^{\calD(2\calD^{2}+2\calD+1)}\calB^{\calD(3\calD^{2}+3\calD+1)}), 
\end{align*}
where $N_{\max}=\max\{N_{\ell}:\ell=0,\ldots,\calD\}$.
\end{lemma}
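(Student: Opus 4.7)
The plan is to run an induction on the layer index, tracking $C^m$ norms of the intermediate representations. Write $\phi_0(x)=x$ and, for $\ell=0,\ldots,\calD-1$, set $\phi_{\ell+1}(x)=\varrho(T_\ell\phi_\ell(x))$, so that $\phi = T_\calD \phi_\calD$. The key structural fact is that the $W^{1,\infty}$-norm of each affine map $T_\ell$ is bounded by $N_{\max}\calB$ and, under the hypothesis used throughout \cref{lemma:stat}, the derivatives $\varrho,\varrho',\varrho'',\varrho'''$ are all uniformly bounded by a constant depending only on $\varrho$, which I will absorb into the implicit $\calO(\cdot)$.

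For the $C^0$ bound, once $\ell\geq 1$ we have $\|\phi_\ell\|_{C^0(\bU)}\leq\|\varrho\|_\infty$ because the activation is bounded; applying the final affine map gives $\|\phi\|_{C^0(\bU)}\leq N_\calD\calB\|\varrho\|_\infty+\calB=\calO(N_{\max}\calB)$. For the $C^1$ bound, I differentiate the recursion to obtain $\nabla\phi_{\ell+1}=\mathrm{diag}(\varrho'(T_\ell\phi_\ell))\,A_\ell\,\nabla\phi_\ell$, so each hidden layer contributes a multiplicative factor of order $N_{\max}\calB$; after $\calD$ layers plus the output affine map, the bound $\calO((N_{\max}\calB)^\calD)$ follows immediately.

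For $C^2$ and $C^3$ the core tool is Faà di Bruno's formula (equivalently, the iterated chain rule) applied to $\phi_{\ell+1}=\varrho\circ T_\ell\circ\phi_\ell$. Each partial derivative of order $m$ of $\phi_{\ell+1}$ is a sum of products consisting of (i) a derivative $\varrho^{(k)}$ with $k\leq m$, (ii) $k$ entries of $A_\ell$ and, inside those entries, (iii) partial derivatives of $\phi_\ell$ of orders summing to $m$. Bounding all $\varrho^{(k)}$ factors by constants and all matrix entries by $\calB$, and noting there are $N_{\max}^k$ choices of indices contributing at most $N_{\max}^m$ products in total, one arrives at a recursion of the schematic form
\begin{equation*}
M_{\ell+1}^{(m)}\;\lesssim\;\sum_{k=1}^{m}(N_{\max}\calB)^{k}\!\!\sum_{m_1+\cdots+m_k=m}\!\!\prod_{j=1}^{k}M_\ell^{(m_j)},
\end{equation*}
where $M_\ell^{(m)}:=\|\phi_\ell\|_{C^m(\bU)}$. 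I would then unroll this recursion; the dominant contribution at order $m$ comes from the term $(M_\ell^{(1)})^m$, whose first-order factor is itself exponential in $\ell$. Iterating through $\calD$ layers and stacking the polynomial-in-$N_{\max}$ prefactors yields the claimed exponents $\calD(2\calD+1)$ and $\calD(\calD+1)$ for $m=2$, and the corresponding cubic-in-$\calD$ exponents for $m=3$; the final affine map $T_\calD$ only adds a uniformly bounded multiplicative factor $N_{\max}\calB$.

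The main obstacle will be the bookkeeping: keeping the $N_{\max}$ and $\calB$ exponents separate through the Faà di Bruno expansion and verifying by a careful induction that the accumulated worst-case exponents match the precise polynomials in $\calD$ quoted in the statement. The analysis is otherwise mechanical, and the $C^m$-regularity of $\phi$ is automatic from the assumed $C^3$-regularity of $\varrho$ since the affine maps and compositions preserve smoothness.
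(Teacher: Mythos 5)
Your proposal follows essentially the same route as the paper's proof: a layer-by-layer induction in which the $C^{1}$ bound comes from the chain rule and the $C^{2}$, $C^{3}$ bounds come from iterating the product and chain rules (your Fa\`{a} di Bruno formulation is just the systematic version of the explicit expansions the paper writes out for orders two and three). The recursion you set up, with the dominant contribution at order $m$ coming from the $m$-fold product of first-derivative bounds, is exactly the mechanism the paper uses to accumulate the stated polynomial-in-$\calD$ exponents, so only the deferred bookkeeping separates your sketch from the written proof.
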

\begin{proof}
Recall the definition of the neural network $\phi$:
\begin{align*}
\phi_{0}(x)&=x, \\
\phi_{\ell}(x)&=\varrho(T_{\calD}(\phi_{\ell-1}(x))),\quad \ell=1,\ldots,\calD-1, \\
\phi(x)&=T_{\calD}(\phi_{\calD-1}(x)). 
\end{align*}
Let $\calB_{\varrho,i}$ ($i=0,1,2,3$) be the bounds of the activation function $\varrho$ and its derivatives, i.e., $|\varrho(x)|\leq\calB_{\varrho,0}$, $|\varrho^{\prime}(x)|\leq\calB_{\varrho,1}$, $|\varrho^{\prime\prime}(x)|\leq\calB_{\varrho,2}$ and $|\varrho^{\prime\prime\prime}(x)|\leq\calB_{\varrho,3}$, for each $x\in\bU$. Then it holds 
\begin{equation*}
|\phi(x)|\leq N_{\calD}\calB\calB_{\varrho,0}.
\end{equation*}
Let $\phi_{\ell}(x)=(\phi_{\ell}^{1}(x),\ldots,\phi_{\ell}^{N_{\ell+1}}(x))$, then we deduce
\begin{equation*}
\partial_{x_{q}}\phi_{\ell}^{m}(x)=\sum_{i=1}^{N_{\ell}}A_{\ell}^{m,i}\varrho^{\prime}(\phi_{\ell-1}^{i}(x))\partial_{x_{q}}\phi_{\ell-1}^{i}(x),\quad\text{for}~1\leq q\leq d,
\end{equation*}
and consequently, 
\begin{equation*}
|\partial_{x_{q}}\phi_{\ell}^{m}(x)|\leq N_{\ell}\calB\calB_{\varrho,1}\max_{1\leq i\leq N_{\ell}}|\partial_{x_{q}}\phi_{\ell-1}^{i}(x)|,\quad\text{for}~~1\leq q\leq d.
\end{equation*}
Denote $B_{\ell,1}=\max_{m}\sup_{x\in\bU}|\partial_{x_{q}}\phi_{\ell}^{m}(x)|$, we have
\begin{equation}\label{eq:proof:nnbound:1}
B_{\ell,1}\leq\Big(\prod_{i=1}^{\ell}N_{i}\Big)\calB^{\ell}\calB_{\varrho,1}^{\ell}\leq N_{\max}^{\ell}\calB^{\ell}\calB_{\varrho,1}^{\ell}\quad\text{and}\quad
\sup_{x\in\bU}|\partial_{x_{p}}\phi(x)|\leq N_{\max}^{\calD}\calB^{\calD}\calB_{\varrho,1}^{\calD}.
\end{equation}
We next consider the bound of $\partial_{x_{q}}\partial_{x_{p}}\phi_{\ell}^{m}(x)$. It is easy to show that
\begin{align*}
\partial_{x_{q}}\partial_{x_{p}}\phi_{\ell}^{m}(x)
&=\partial_{x_{q}}\Big(\sum_{i=1}^{N_{\ell}}A_{\ell}^{m,i}\varrho^{\prime}(\phi_{\ell-1}^{i}(x))\partial_{x_{p}}\phi_{\ell-1}^{i}(x)\Big) \\
&=\sum_{i=1}^{N_{\ell}}A_{\ell}^{m,i}\partial_{x_{q}}\Big\{\varrho^{\prime}(\phi_{\ell-1}^{i}(x))\Big\}\partial_{x_{p}}\phi_{\ell-1}^{i}(x)+\sum_{i=1}^{N_{\ell}}A_{\ell}^{m,i}\varrho^{\prime}(\phi_{\ell-1}^{i}(x))\partial_{x_{q}}\partial_{x_{p}}\phi_{\ell-1}^{i}(x) \\
&=\sum_{i=1}^{N_{\ell}}A_{\ell}^{m,i}\varrho^{\prime\prime}(\phi_{\ell-1}^{i}(x))\partial_{x_{q}}\phi_{\ell-1}^{i}(x)\partial_{x_{p}}\phi_{\ell-1}^{i}(x)+\sum_{i=1}^{N_{\ell}}A_{\ell}^{m,i}\varrho^{\prime}(\phi_{\ell-1}^{i}(x))\partial_{x_{q}}\partial_{x_{p}}\phi_{\ell-1}^{i}(x).
\end{align*}
Thus for $1\leq p,q\leq d$, we have
\begin{equation*}
|\partial_{x_{q}}\partial_{x_{p}}\phi_{\ell}^{m}(x)|\leq N_{\ell}\calB\calB_{\varrho,2}B_{\ell-1,1}^{2}+N_{\ell}\calB\calB_{\varrho,1}\max_{1\leq i\leq N_{\ell}}|\partial_{x_{q}}\partial_{x_{p}}\phi_{\ell-1}^{i}(x)|.
\end{equation*}
Then using \cref{eq:proof:nnbound:1} gives
\begin{equation}\label{eq:proof:nnbound:2}
B_{\ell,2}\lesssim N_{\max}^{\ell}\Big(\prod_{i=1}^{\ell}B_{i,1}^{2}\Big)\calB^{\ell}(\calB_{\varrho,1}\calB_{\varrho,2})^{\ell}\lesssim N_{\max}^{\ell(2\ell+1)}\calB^{\ell(\ell+1)}\calB_{\varrho,1}^{\ell(\ell+1)}\calB_{\varrho,2}^{\ell},
\end{equation}
and
\begin{equation*}
|\partial_{x_{q}}\partial_{x_{p}}\phi(x)|\lesssim N_{\max}^{\calD(2\calD+1)}\calB^{\calD(\calD+1)}\calB_{\varrho,1}^{\calD(\calD+1)}\calB_{\varrho,2}^{\calD}.
\end{equation*}
Similarly, we now bound $\partial_{x_{r}}\partial_{x_{q}}\partial_{x_{p}}\phi_{\ell}^{m}(x)$. It follows that
\begin{align*}
&\partial_{x_{r}}\partial_{x_{q}}\partial_{x_{p}}\phi_{\ell}^{m}(x) \\
&=\sum_{i=1}^{N_{\ell}}A_{\ell}^{m,i}\partial_{x_{r}}\Big\{\varrho^{\prime\prime}(\phi_{\ell-1}^{i}(x))\partial_{x_{q}}\phi_{\ell-1}^{i}(x)\partial_{x_{p}}\phi_{\ell-1}^{i}(x)\Big\} \\
&\quad+\sum_{i=1}^{N_{\ell}}A_{\ell}^{m,i}\partial_{x_{r}}\Big\{\varrho^{\prime}(\phi_{\ell-1}^{i}(x))\partial_{x_{q}}\partial_{x_{p}}\phi_{\ell-1}^{i}(x)\Big\} \\
&=\sum_{i=1}^{N_{\ell}}A_{\ell}^{m,i}\varrho^{\prime\prime\prime}(\phi_{\ell-1}^{i}(x))\partial_{x_{r}}\phi_{\ell-1}^{i}(x)\partial_{x_{q}}\phi_{\ell-1}^{i}(x)\partial_{x_{p}}\phi_{\ell-1}^{i}(x) \\
&\quad+\sum_{i=1}^{N_{\ell}}A_{\ell}^{m,i}\varrho^{\prime\prime}(\phi_{\ell-1}^{i}(x))\Big\{\partial_{x_{r}}\partial_{x_{q}}\phi_{\ell-1}^{i}(x)\partial_{x_{p}}\phi_{\ell-1}^{i}(x)+\partial_{x_{q}}\phi_{\ell-1}^{i}(x)\partial_{x_{r}}\partial_{x_{p}}\phi_{\ell-1}^{i}(x)\Big\} \\
&\quad+\sum_{i=1}^{N_{\ell}}A_{\ell}^{m,i}\varrho^{\prime\prime}(\phi_{\ell-1}^{i}(x))\partial_{x_{r}}\phi_{\ell-1}^{i}(x)\partial_{x_{q}}\partial_{x_{p}}\phi_{\ell-1}^{i}(x) \\
&\quad+\sum_{i=1}^{N_{\ell}}A_{\ell}^{m,i}\varrho^{\prime}(\phi_{\ell-1}^{i}(x))\partial_{x_{r}}\partial_{x_{q}}\partial_{x_{p}}\phi_{\ell-1}^{i}(x).
\end{align*}
Hence
\begin{align*}
&|\partial_{x_{r}}\partial_{x_{q}}\partial_{x_{p}}\phi_{\ell}^{m}(x)| \\
&\leq N_{\ell}\calB\calB_{\varrho,3}B_{\ell-1,1}^{3}+3N_{\ell}\calB\calB_{\varrho,2}B_{\ell-1,1}B_{\ell-1,2}+N_{\ell}\calB\calB_{\varrho,1}\max_{1\leq i\leq N_{\ell}}|\partial_{x_{r}}\partial_{x_{q}}\partial_{x_{p}}\phi_{\ell-1}^{i}(x)|.
\end{align*}
Then we have for $1\leq p,q,r\leq d$
\begin{align*}
\sup_{x\in\bU}|\partial_{x_{r}}\partial_{x_{q}}\partial_{x_{p}}\phi(x)|
&\lesssim N_{\max}^{\calD}\Big(\prod_{\ell=1}^{\calD}B_{\ell,1}^{3}+\prod_{\ell=1}^{\calD}B_{\ell,1}B_{\ell,2}\Big)\calB^{\calD}(\calB_{\varrho,1}\calB_{\varrho,2}\calB_{\varrho,3})^{\calD} \\
&\lesssim N_{\max}^{\calD(2\calD^{2}+2\calD+1)}\calB^{\calD(3\calD^{2}+3\calD+1)}\calB_{\varrho,1}^{\calD(3\calD^{2}+3\calD+1)}\calB_{\varrho,2}^{\calD(\calD+1)}\calB_{\varrho,2}^{\calD}.
\end{align*}
This completes the proof.
\end{proof}

\begin{lemma}[Extension]\label{lemma:extension}
Let $m\in\bbN$ and $U\subset\subset(0,1)^{d}$ be a domain with $C^{\infty}$-boundary. Then for each $\psi\in C^{m}(\bU)$, there exists a bounded and compact support extension $E\psi\in C_{0}^{m}([0,1]^{d})$, such that
\begin{enumerate}[(i)]
\item $E\psi(x)=\psi(x)$ in $\bU$,
\item $\|E\psi\|_{C^{m}([0,1]^{d})}\leq C\|\psi\|_{C^{m}(\bU)}$,
\end{enumerate}
where $C$ is a constant depending only on $U$.
\end{lemma}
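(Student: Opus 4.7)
The plan is to combine a classical $C^{m}$-extension theorem with a smooth cutoff that localizes the support inside $(0,1)^{d}$. Since $U$ has $C^{\infty}$-boundary, Stein's extension theorem supplies a bounded linear operator $\widetilde{E}:C^{m}(\bU)\to C^{m}(\bbR^{d})$ satisfying $\widetilde{E}\psi|_{\bU}=\psi$ and $\|\widetilde{E}\psi\|_{C^{m}(\bbR^{d})}\leq C_{1}\|\psi\|_{C^{m}(\bU)}$, where $C_{1}$ depends only on $U$ (and on $m$, which we treat as fixed).

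Next, because $U\subset\subset(0,1)^{d}$, there exists $\delta>0$ such that the $\delta$-neighborhood $U_{\delta}$ of $\bU$ satisfies $\overline{U_{\delta}}\subset(0,1)^{d}$. I would then select a cutoff $\eta\in C_{c}^{\infty}((0,1)^{d})$ with $0\leq\eta\leq 1$, $\eta\equiv 1$ on $U_{\delta/2}$, and $\operatorname{supp}(\eta)\subset\overline{U_{\delta}}$. Such an $\eta$ is obtained by mollifying the indicator of $U_{3\delta/4}$, and the norms $\|\eta\|_{C^{m}([0,1]^{d})}$ can be bounded by a constant depending only on $U$ and $m$.

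Finally, define $E\psi:=\eta\,\widetilde{E}\psi$, extended by zero outside the support of $\eta$. Then $E\psi\in C_{0}^{m}([0,1]^{d})$ and coincides with $\psi$ on $\bU$, which verifies (i). Property (ii) follows from the Leibniz rule applied to each partial derivative of order at most $m$:
\begin{equation*}
\|E\psi\|_{C^{m}([0,1]^{d})}\leq C_{m}\|\eta\|_{C^{m}([0,1]^{d})}\,\|\widetilde{E}\psi\|_{C^{m}(\bbR^{d})}\leq C\|\psi\|_{C^{m}(\bU)},
\end{equation*}
with $C$ depending only on $U$.

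The construction is essentially off-the-shelf; the only substantive ingredient is Stein's (or Whitney's) $C^{m}$-extension theorem, which requires the $C^{\infty}$-regularity assumed on $\partial U$. The only genuine technicality is arranging the support to lie in $(0,1)^{d}$ rather than in all of $\bbR^{d}$, and this is handled by the cutoff $\eta$ using the strict inclusion $U\subset\subset(0,1)^{d}$. No further difficulties are anticipated.
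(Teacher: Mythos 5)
Your proposal is correct and follows essentially the same route as the paper: invoke a classical total extension operator (the paper cites Adams--Fournier Theorem 5.24, i.e.\ Stein's theorem) and multiply by a smooth cutoff supported in $(0,1)^{d}$ that equals $1$ on $\bU$, with the norm bound following from the Leibniz rule. Your version is in fact slightly more careful than the paper's in constructing the cutoff and in noting that the constant also depends on $m$.
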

\begin{proof}
For each function $\psi\in C^{m}(\bU)$, by Theorem 5.24 \cite{adams2003sobolev}, there exists $E_{1}\psi\in C^{m}([0,1]^{d})$. Let $\zeta\in C_{0}^{m}([0,1]^{d})$ be a cut-off function such that
\begin{enumerate}[(i)]
\item $0\leq\zeta(x)\leq 1$ for each $x\in[0,1]^{d}$,
\item $\zeta(x)=1$ for each $x\in\bU$, and
\item $|\partial^{\alpha}\zeta|\leq C/(\dist(U,\partial(0,1)^{d}))^{|\alpha|}$.
\end{enumerate}
Then $E\psi:=\zeta E_{1}\psi\in C_{0}^{m}([0,1]^{d})$ is the desired function of $\psi$. The proof is completed.
\end{proof}

\begin{lemma}\label{lemma:metric:entropy}
Let $U\subset\subset(0,1)^{d}$ be a domain with $C^{\infty}$-boundary. Let $k,s\in\bbN$ with $k<s$, and $\calC_{B}^{s}$ be the norm-ball of radius $B$ in $C^{s}(\bU)$. Then 
\begin{equation*}
H(\varepsilon,\calC_{B}^{s},C^{k}(\bU)\leq C\cdot\Big(\frac{B}{\varepsilon}\Big)^{\frac{d}{s-k}},
\end{equation*}
where $C$ is a constant depending on $s$, $k$ and $U$.
\end{lemma}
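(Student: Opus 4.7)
My plan is to follow the classical Kolmogorov--Tikhomirov covering argument for smooth function classes, using the extension lemma already at our disposal to reduce matters to the unit cube and exploiting local Taylor approximation on a mesh.

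First, I would apply \cref{lemma:extension} with $m=s$ to the extension operator $E:C^{s}(\bU)\to C_{0}^{s}([0,1]^{d})$, which is bounded with $\|E\psi\|_{C^{s}([0,1]^{d})}\leq C_{U}\|\psi\|_{C^{s}(\bU)}$. Since restriction from $[0,1]^{d}$ to $\bU$ is $1$-Lipschitz in $C^{k}$, any $\varepsilon$-cover of $E(\calC_{B}^{s})$ in $C^{k}([0,1]^{d})$ induces an $\varepsilon$-cover of $\calC_{B}^{s}$ in $C^{k}(\bU)$; so it suffices to work on $[0,1]^{d}$ with the enlarged radius $B':=C_{U}B$.

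Second, I would partition $[0,1]^{d}$ into disjoint cubes $\{Q_{j}\}$ of side length $h$, and for each $f\in E(\calC_{B}^{s})$ consider the piecewise Taylor polynomial $P_{j,f}$ of order $s-1$ centered at the midpoint $x_{j}$ of $Q_{j}$. For every multi-index $|\beta|\leq k$ and $x\in Q_{j}$, Taylor's remainder formula applied to $\partial^{\beta}f\in C^{s-k}$ gives
\begin{equation*}
|\partial^{\beta}f(x)-\partial^{\beta}P_{j,f}(x)|\leq C_{1}\|f\|_{C^{s}}\, h^{s-k}\leq C_{2}B h^{s-k}.
\end{equation*}
Choosing $h\asymp(\varepsilon/B)^{1/(s-k)}$ makes the piecewise approximation error $\leq\varepsilon/2$ in $C^{k}$.

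Third, each local polynomial is determined by $N=\binom{s-1+d}{d}$ Taylor coefficients, and all are bounded in absolute value by $C_{U}B$. Quantizing each coefficient on a grid of spacing $\delta$ chosen so that the induced $C^{k}$-perturbation (controlled via Markov's inequality for polynomials of degree $\leq s-1$ on $Q_{j}$) stays below $\varepsilon/2$ yields a finite family whose cardinality is at most $(C'B/\delta)^{N\cdot h^{-d}}$. Taking logarithms gives $H(\varepsilon,\calC_{B}^{s},C^{k}(\bU))\lesssim h^{-d}\asymp (B/\varepsilon)^{d/(s-k)}$ up to a logarithmic factor that is either absorbed into the constant or, if a log-free rate is required, eliminated by the standard refinement in which one encodes the \emph{differences} of Taylor polynomials between neighbouring cubes (forced by $C^{s}$-smoothness to be of size $h^{s}$ and hence carrying $O(1)$ quantization bits) rather than the polynomials themselves. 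The main obstacle is precisely this last step: the naive cube-by-cube product overcounts, so the argument must use global $C^{s}$-regularity, not merely local smoothness, to get the exact exponent $d/(s-k)$ claimed in the lemma.
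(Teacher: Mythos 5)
Your proof is correct in outline, but it takes a genuinely different route from the paper. The paper does not reprove the Kolmogorov--Tikhomirov bound; instead it \emph{reduces} the $C^{k}$-entropy of the $C^{s}$-ball to the $C^{0}$-entropy of the $C^{s-k}$-ball: it covers $\{\partial^{\alpha}f:|\alpha|=k\}\subseteq\calC_{B}^{s-k}$ in the sup norm, then reconstructs approximants of the lower-order derivatives by taking antiderivatives of the cover elements and controlling the resulting errors with the extension lemma and a Poincar\'e-type (fundamental-theorem-of-calculus) estimate, concluding $N(C\delta,\calC_{B}^{s},C^{k}(\bU))\leq N(\delta,\calC_{B}^{s-k},C(\bU))$ and citing Theorem 4.3.36 of Gin\'e--Nickl for the right-hand side. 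You instead give a direct, self-contained construction: extend to $[0,1]^{d}$, approximate by piecewise Taylor polynomials on a mesh of size $h\asymp(\varepsilon/B)^{1/(s-k)}$, and quantize coefficients, and you correctly identify that the naive cube-by-cube count produces a spurious logarithmic factor which must be removed by encoding differences of Taylor data between neighbouring cubes (i.e.\ by exploiting global $C^{s}$-regularity). The paper's route is shorter because it outsources exactly that combinatorial refinement to the cited theorem; yours buys self-containedness at the cost of redoing it. Two small points to tidy if you write it out: the piecewise polynomial approximants are not themselves in $C^{k}(\bU)$, so you should either measure distances derivative-by-derivative on each cube or pass from an external to an internal cover at the cost of a factor $2$ in $\varepsilon$; and the quantization spacing $\delta\asymp\varepsilon$ already controls the $C^{k}$-perturbation on each small cube (since $h\leq1$), so Markov's inequality is not really needed there.
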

\begin{proof}
Notice that
\begin{equation*}
\Big\{\partial^{\alpha}f:f\in\calC_{B}^{s},~|\alpha|=k\Big\}\subseteq\calC_{B}^{s-k}.
\end{equation*}
Let $\calC_{B,\varepsilon}^{s-k}$ be a $\|\cdot\|_{C(\bU)}$ $\delta$-cover of $\calC_{B}^{s-k}$ with $|\calC_{B,\varepsilon}^{s-k}|=N(\delta,\calC_{B}^{s-k},C(\bU))$.
Then for each $f\in\calC_{B}^{s}$ and $\alpha=(\alpha_{1},\ldots,\alpha_{d})$ with $|\alpha|=k$, there exist $\pi^{\alpha}(f)\in\calC_{B,\varepsilon}^{s-k}$ such that
\begin{equation}\label{emtropy:proof:1}
\|\pi^{\alpha}(f)-\partial^{\alpha}f\|_{C(\bU)}\leq\delta.
\end{equation}
Without loss of generality, we assume $\alpha_{j}\geq 1$, and define
\begin{equation*}
F(x_{1},\ldots,x_{d})=\int_{x_{j,0}}^{x_{j}}\pi^{\alpha}(f)(x_{1},\ldots,x_{j-1},z,x_{j+1},\ldots,x_{d})dz,
\end{equation*}
where $x_{j,0}=\min\{x_{j}:(x_{1},\ldots,x_{j},\ldots,x_{d})\in\bU\}$.
It is clear that $\partial_{j}F=\pi^{\alpha}(f)$. Denote $\bar{\alpha}=(\alpha_{1},\ldots,\alpha_{j}-1,\ldots,\alpha_{d})$. By \cref{lemma:extension}, 
there exist an extension $E(F-\partial^{\bar{\alpha}}f)\in C_{0}^{1}([0,1]^{d})$, such that $E(F-\partial^{\bar{\alpha}}f)(x)=(F-\partial^{\bar{\alpha}}f)(x)$ for each $x\in\bU$, and consequently
\begin{equation}\label{emtropy:proof:2}
\|F-\partial^{\bar{\alpha}}f\|_{C(\bU)}\leq\|E(F-\partial^{\bar{\alpha}}f)\|_{C([0,1]^{d})}.
\end{equation}
Using Poincar{\'e}'s inequality, we have
\begin{equation}\label{emtropy:proof:3}
\|E(F-\partial^{\bar{\alpha}}f)\|_{C([0,1]^{d})}\leq C\|\partial_{j}E(F-\partial^{\bar{\alpha}}f)\|_{C([0,1]^{d})}=C\|\widetilde{E}(\pi^{\alpha}(f)-\partial^{\alpha}f)\|_{C([0,1]^{d})},
\end{equation}
where $\widetilde{E}$ is an extension operator, satisfying $\widetilde{E}(\pi^{\alpha}(f)-\partial^{\alpha}f)\in C_{0}([0,1]^{d})$ and
\begin{equation}\label{emtropy:proof:4}
\|\widetilde{E}(\pi^{\alpha}(f)-\partial^{\alpha}f)\|_{C([0,1]^{d})}\leq C\|\pi^{\alpha}(f)-\partial^{\alpha}f\|_{C(\bU)},
\end{equation}
where $C$ is a constant depending on $U$. Here the existence of $\widetilde{E}$ can be guaranteed by \cref{lemma:extension}. Combining \cref{emtropy:proof:1,emtropy:proof:2,emtropy:proof:3,emtropy:proof:4} yields
\begin{equation*}
\max\Big\{\|F-\partial^{\bar{\alpha}}f\|_{C(\bU)},\|\partial_{j}(F-\partial^{\bar{\alpha}}f)\|_{C(\bU)}\Big\}\leq C\|\pi^{\alpha}(f)-\partial^{\alpha}f\|_{C(\bU)}\leq C\delta,
\end{equation*}
where $C$ is a constant depending on $U$.
\par Repeating the same procedure as above, we can construct a function $\pi(f)\in C^{k}(\bU)$, such that
\begin{equation*}
\max_{0\leq|\alpha|\leq k}\|\partial^{\alpha}(\pi(f)-f)\|_{C(\bU)}\leq C\delta,
\end{equation*}
which means $\calC_{B,\varepsilon}^{s}:=\{\pi(f):f\in\calC_{B}^{s}\}$ is a $\|\cdot\|_{C^{k}(\bU)}$ $C\delta$-cover of $\calC_{B}^{s}$ satisfying $|\calC_{B,\varepsilon}^{s}|=|\calC_{B,\varepsilon}^{s-k}|$. Then
\begin{equation*}
N(C\delta,\calC_{B}^{s},C^{k}(\bU))\leq N(\delta,\calC_{B}^{s-k},C(\bU)).
\end{equation*}
Setting $\varepsilon=C\delta$ and by Theorem 4.3.36 in \cite{Gine2015Mathematical}, we complete the proof.
\end{proof}

\begin{proof}[Proof of \cref{lemma:stat}]
It is sufficient to estimate the metric entropy of the following function classes:
\begin{enumerate}[(i)]
\item $\calF_{\gamma}=\calN_{\varrho}(\calD_{\gamma},\calS,\calB_{\gamma})$ with $\calD_{\gamma}=C\log(d+s)$ and $\calB_{\gamma}=C\calS^{\frac{2s}{d}+7}$, and
\item $\calF_{u}=\calN_{\varrho}(\calD_{u},\calS,\calB_{u})$ with $\calD_{u}=C\log(d+s+1)$ and $\calB_{u}=C\calS^{\frac{2s+2}{d}+7}$.
\end{enumerate}
By \cref{lemma:bound:nn}, we have
\begin{equation}\label{eq:staerr:proof:3}
\calF_{\gamma}\subseteq \calC_{B_{\gamma}}^{2},\quad\text{with}~B_{\gamma}=\calO(N_{\gamma,\max}^{\calD_{\gamma}(2\calD_{\gamma}+1)}\calB_{\gamma}^{\calD_{\gamma}(\calD_{\gamma}+1)}),
\end{equation}
and
\begin{equation}\label{eq:staerr:proof:4}
\calF_{u}\subseteq \calC_{B_{u}}^{3},\quad\text{with}~B_{u}=\calO(N_{u,\max}^{\calD_{u}(2\calD_{u}^{2}+2\calD_{u}+1)}\calB_{u}^{\calD_{u}(3\calD_{u}^{2}+3\calD_{u}+1)}).
\end{equation}
Consequently, we have
\begin{equation}\label{eq:staerr:proof:1}
H(\varepsilon,\calF_{\gamma},\|\cdot\|_{W^{1,\infty}(U)})\leq
H(\varepsilon,\calC_{B_{\gamma}}^{2},\|\cdot\|_{W^{1,\infty}(U)}),
\end{equation}
and
\begin{equation}\label{eq:staerr:proof:2}
H(\varepsilon,\calF_{u},\|\cdot\|_{W^{2,\infty}(U)})\leq
H(\varepsilon,\calC_{B_{u}}^{3},\|\cdot\|_{W^{2,\infty}(U)}).
\end{equation}
Applying \cref{lemma:metric:entropy,eq:staerr:proof:3,eq:staerr:proof:4} yields
\begin{align*}
H(\varepsilon,\calF_{\gamma},\|\cdot\|_{W^{1,\infty}(U)})&\leq
C\cdot\Big(\frac{N_{\gamma,\max}^{\calD_{\gamma}(2\calD_{\gamma}+1)}\calB_{\gamma}^{\calD_{\gamma}(\calD_{\gamma}+1)}}{\varepsilon}\Big)^{d},\quad\text{and} \\
H(\varepsilon,\calF_{u},\|\cdot\|_{W^{2,\infty}(U)})&\leq
C\cdot\Big(\frac{N_{u,\max}^{\calD_{u}(2\calD_{u}^{2}+2\calD_{u}+1)}\calB_{u}^{\calD_{u}(3\calD_{u}^{2}+3\calD_{u}+1)}}{\varepsilon}\Big)^{d}.
\end{align*}
Substituting $N_{\gamma,\max}\leq\calS$ and $N_{u,\max}\leq\calS$ gives
\begin{equation}\label{eq:staerr:proof:5}
\max\big\{H_{\gamma}^{\varepsilon},H_{u}^{\varepsilon}\big\}\lesssim\calO(\calS^{6(4d+s+1)\log^{3}(d+s+1)}\varepsilon^{-d}).
\end{equation}
Setting 
\begin{equation*}
\varepsilon=\Big(\frac{B_{\gamma}^{2}B_{u}^{2}+B_{f}^{2}+\delta^{2}}{(\delta+B_{\gamma}B_{u})(B_{\gamma}+B_{u})}\frac{\calS^{6(4d+s+1)\log^{3}(d+s+1)}}{n}\Big)^{\frac{1}{d+1}}
\end{equation*}
and letting $m=s+1$ yield the desired result.
\end{proof}

\bibliography{CDII}
\bibliographystyle{plainnat}
\end{document}